\theoremstyle{plain} 
\newtheorem{theorem}{\indent\sc Theorem}[section]
\newtheorem{claim}[theorem]{\indent\sc Claim}
\newtheorem{conjecture}[theorem]{\indent\sc Conjecture}
\theoremstyle{definition} 
\def\Pi{{\mathbf{P}}}
\begin{document}

\title[Spanning trees with at most two branch vertices in claw - free graphs]{Spanning trees with at most two branch vertices in claw - free graphs} 

\author[P.~H.~Ha]{Pham Hoang Ha} 

\author[D.~D.~Hanh]{Dang Dinh Hanh} 
\keywords{spanning tree, branch vertices, claw-free.}

\subjclass[2010]{ 
Primary 05C05, 05C70. Secondary 05C07, 05C69.
}

\address{
Department of Mathematics, Hanoi National University of Education, 136, XuanThuy str., Hanoi, Vietnam
}
\email{ha.ph@hnue.edu.vn}

\address{
Department of Mathematics, Hanoi Architectural University, km10, NguyenTrai str., Hanoi, Vietnam
}
\email{ddhanhdhsphn@gmail.com}


\maketitle

\begin{abstract}
In this article, we will prove that if $G$ is a connected claw-free graph and either $\sigma_6(G)\geq |G|-5$ or $\sigma_7(G)\geq |G|-2$, here $\sigma_k(G)$ is the minimum degree sum of $k$ independent vertices in $G$, then $G$ has a spanning tree with at most two branch vertices.
\end{abstract}

\section{Introduction} 
In this article, we always consider simple graphs, which have neither loops nor multiple edges. For a graph $G$, let $V(G)$ and $E(G)$ denote the set of vertices and the set of edges of G, respectively. We write $|G|$ for the order of $G$ (i.e., $|G| = |V(G)|).$ For a vertex $v$ of $G$, we denote by $\deg_{G}(v)$ the degree of $v$ in $G.$ \\ 
 For an integer $k\geqslant 2$, we define
 $$\sigma_k(G)=\min\left\lbrace \sum_{x\in S}\deg_{G}(x):\text{for all indepentdent subset $S$ in } V(G),|S|=k\right\rbrace.$$
 In a tree, a vertex of degree one and a vertex of degree at least three is called a \textit{leaf} and a \textit{branch vertex} respectively. Many researchers have investigated the degree sum conditions for the existence of a spanning tree with a bounded number of branch vertices (see the survey article  \cite{OY} for more details).
 
 Moreover, many analogue results for the claw-free graphs are studied (see \cite{MS}, \cite{R}, \cite{GHHSV}, \cite{FKKLR} and \cite{MOY} for examples). In particular, in 2004, Gargano, Hammar, Hell, Stacho and Vaccaro gave a sufficient condition for a connected claw-free graph to have a spanning tree with few branch vertices. They proved the following theorem.
\begin{theorem}[{\cite[Gargano et al.]{GHHSV}}]\label{thm1}
Let $k$  be a non-negative integer and let $G$ be a connected claw-free graph of order $n$. If $\sigma_{k+3}\geq n-k-2$, then $G$ has a spanning tree with at most $k$ branch vertices.
\end{theorem}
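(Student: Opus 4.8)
The plan is to argue by contradiction, using an extremal spanning tree together with the local structure forced by claw-freeness. Suppose $G$ has no spanning tree with at most $k$ branch vertices, and among all spanning trees of $G$ choose one, say $T$, whose number of leaves is as small as possible; when there are ties I would break them by minimizing a secondary quantity, e.g.\ the number of stems (neighbours of leaves) having degree exactly two in $T$. Writing $L$ and $B$ for the sets of leaves and of branch vertices of $T$, the handshake identity $\sum_{v\in V(T)}(\deg_{T}(v)-2)=-2$ rearranges to $\sum_{v\in B}(\deg_{T}(v)-2)=|L|-2$, and since every branch vertex contributes at least $1$ on the left we get $|L|\ge |B|+2$. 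Because $T$ has at least $k+1$ branch vertices by our assumption, it has at least $k+3$ leaves. The whole goal is then to exhibit an independent set $S$ with $|S|=k+3$ and $\sum_{v\in S}\deg_{G}(v)\le n-k-3$, which directly contradicts $\sigma_{k+3}(G)\ge n-k-2$.

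First I would show, using claw-freeness and the minimality of $T$, that the leaf set $L$ is independent in $G$, so that any $k+3$ leaves constitute a candidate set $S$. The basic tool is a leaf exchange: if two leaves $x,y$ are adjacent in $G$, then $T+xy$ contains a unique cycle $C$, and deleting from $C$ a suitable edge incident to a branch vertex yields a spanning tree in which $x$ or $y$ has become internal while no new leaf is created, contradicting the minimality of $|L|$. The one configuration this simple move does not immediately dispose of is when both stems involved have degree exactly two in $T$; this is precisely where the secondary minimization in the choice of $T$, combined with claw-freeness applied at a stem (no vertex may have three pairwise non-adjacent neighbours), is used to rule out the bad case. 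The output of this stage is that $L$ is independent.

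The remaining, and in my view hardest, step is the degree-sum estimate. Since $S$ is independent, $\sum_{v\in S}\deg_{G}(v)$ equals the number of edges between $S$ and $V(G)\setminus S$, and the target inequality reads $\sum_{v\in S}\deg_{G}(v)\le n-k-3=|V(G)\setminus S|$. Claw-freeness gives a first, crude estimate for free: as the neighbours of any vertex that lie in the independent set $S$ are pairwise non-adjacent, each vertex of $V(G)\setminus S$ is adjacent to at most two vertices of $S$, so $\sum_{v\in S}\deg_G(v)\le 2|V(G)\setminus S|$. The main obstacle is to sharpen this factor-two bound to the required one-to-one count. I expect this to come from a confinement principle enforced by extremality: if a leaf $v$ with stem $u$ is adjacent in $G$ to a vertex $w$ lying far from $u$ in $T$, one should be able to re-attach $v$ along the edge $vw$ and simultaneously shorten a long leg or merge two branches meeting at a branch vertex (again exploiting that two of any three branches at a branch vertex have adjacent first edges), thereby decreasing either the number of leaves or the number of branch vertices and contradicting the choice of $T$. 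Organising these local re-routing and merging moves so that the edges leaving $S$ can be injected into $V(G)\setminus S$ is the technical heart of the argument; once it is carried out, the bound $\sum_{v\in S}\deg_G(v)\le n-k-3$ follows and contradicts $\sigma_{k+3}(G)\ge n-k-2$, completing the proof.
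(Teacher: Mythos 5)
First, a point of reference: the paper does not prove this statement at all. Theorem~\ref{thm1} is quoted from Gargano--Hammar--Hell--Stacho--Vaccaro \cite{GHHSV} and is used as a black box (alongside Theorem~\ref{theorem KKMO}) to launch the proofs of Theorems~\ref{thm-mainB} and~\ref{thm-mainA}. So your proposal can only be measured against the technique the paper uses for its own case $k=2$, which is the same general method your plan gestures at. Your set-up is correct and standard: if $G$ has no spanning tree with at most $k$ branch vertices, every spanning tree has at least $k+1$ branch vertices, hence at least $k+3$ leaves by the handshake computation; adjacent leaves can be eliminated by a leaf exchange (in fact more easily than you suggest: since $T$ has a branch vertex, the path in $T$ between any two leaves necessarily passes through a branch vertex, so the ``bad case'' motivating your secondary tie-break never occurs); and the theorem would follow from producing an independent $(k+3)$-set $S$ with $\deg(S)\le n-k-3$.

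The genuine gap is that the entire content of the theorem lies in the one step you leave as an expectation: improving the trivial claw-freeness bound $\deg(S)\le 2\,|V(G)\setminus S|$ to $\deg(S)\le |V(G)\setminus S|$. You write that you ``expect'' this to follow from a confinement principle and that ``once it is carried out'' the bound follows --- but nothing in the proposal carries it out, and this is precisely the technical heart. What is actually needed (and what the paper does for $k=2$, following Kyaw's method) is: a partition of $V(G)$ into the branches $B_i$, the internal paths of $T$, and the branch vertices themselves; and then, piece by piece, an injection of the incidences between $S$ and that piece into the piece, via claims of the type ``if $x\in B_i\cap N(u_j)$ with $j\ne i$, then $x\ne u_i,v_i$ and the predecessor $x^{-}$ is adjacent to no other member of $S$'' and ``no vertex of $B_i$ is adjacent to two members of $S\setminus\{u_i\}$'', which together yield $\sum_j |N(u_j)\cap B_i|\le |B_i|$ (compare Claims~\ref{claim4}--\ref{claim5} and inequalities (\ref{eq11})--(\ref{eq15}) of the paper). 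Moreover, your extremal choice of $T$ (minimum number of leaves, tie-broken by degree-two stems) is too weak to prove such claims: the exchanges they require --- detaching a branch at a branch vertex and re-attaching it along a chord, then cutting an internal edge --- typically leave the number of leaves unchanged, which is exactly why the paper instead minimizes a lexicographic vector of distances between branch vertices (conditions (C1)--(C4)). Without some such refined extremal choice and the resulting neighborhood-disjointness claims, the factor-two bound cannot be sharpened and no contradiction with $\sigma_{k+3}(G)\ge n-k-2$ is reached.
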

After that, under the same degree condition of Theorem \ref{thm1}, Kano,
Kyaw, Matsuda, Ozeki, Saito and Yamashita showed the existence of a spanning tree with a bounded number of leaves. That seems slightly strong for the existence of a spanning tree with a
bounded number of branch vertices above. They proved the following.
\begin{theorem}[{\cite[Kano et al.]{KKMO}}]\label{theorem KKMO}
Let $k$ be a non-negative integer and let $G$ be a connected claw-free graph of order $n$. If $\sigma_{k+3}\geq n-k-2$, then $G$ has a spanning tree with at most $k+2$ leaves.
\end{theorem}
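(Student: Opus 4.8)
The plan is to argue by contradiction: assume $G$ has no spanning tree with at most $k+2$ leaves, and among all spanning trees of $G$ choose one, $T$, with the minimum number of leaves; by assumption its leaf set $L=\{u_1,\dots,u_\ell\}$ then satisfies $\ell\ge k+3$. The goal is to exhibit an independent set $S\subseteq V(G)$ with $|S|=k+3$ and $\sum_{x\in S}\deg_G(x)<n-k-2$, contradicting the hypothesis $\sigma_{k+3}(G)\ge n-k-2$. I would look for $S$ inside $L$ itself, exploiting that leaves sit at the ``boundary'' of $T$ and so ought to have few neighbours, and that claw-freeness sharply limits how many leaves a single vertex can see.

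First I would record two structural facts. \emph{(i) The leaves are independent in $G$.} Since $\ell\ge 3$, $T$ is not a path, so the $T$-path joining any two leaves contains an internal branch vertex; hence if two leaves $u,v$ were adjacent in $G$, adding $uv$ to $T$ and deleting a suitable edge of the resulting cycle incident to that branch vertex would yield a spanning tree with strictly fewer leaves, contradicting minimality. \emph{(ii) Each vertex of $G$ is adjacent to at most two leaves.} If some $x$ were adjacent to three leaves $a,b,c$, then by (i) the set $\{a,b,c\}$ is independent, so $\{x,a,b,c\}$ induces a claw, contradicting claw-freeness. Summing (ii) over $V(G)\setminus L$, and using that no neighbour of a leaf is a leaf, gives $\sum_{u\in L}\deg_G(u)\le 2(n-\ell)$.

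The decisive step is to extract $k+3$ leaves with \emph{pairwise disjoint} neighbourhoods. If $u_{i_1},\dots,u_{i_{k+3}}$ are such leaves, then since no neighbour of a leaf is a leaf we have $\bigcup_j N_G(u_{i_j})\subseteq V(G)\setminus L$, and disjointness gives $\sum_j\deg_G(u_{i_j})=\bigl|\bigcup_j N_G(u_{i_j})\bigr|\le n-\ell\le n-(k+3)<n-k-2$, the desired contradiction. I expect this disjointness to be the main obstacle. The averaging bound from (ii) alone is too weak: taking the $k+3$ leaves of smallest degree only yields a sum of at most $\tfrac{2(k+3)}{\ell}(n-\ell)$, which exceeds $n-k-2$ precisely in the critical regime $\ell\approx k+3$. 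To close this gap I would impose a secondary extremal condition on $T$ (for instance, among minimum-leaf trees one minimizing a weight that penalizes leaves with large neighbourhoods, or the total length of the pendant paths ending at leaves) and then show, by local edge-exchanges around a vertex $x$ shared by two leaves $u,v$, that any such sharing either violates minimality or can be removed; claw-freeness at the branch vertices terminating the pendant paths is what forces the required exchange to exist. The tightness of the single-unit slack between $n-\ell\le n-(k+3)$ and $n-k-2$ signals that controlling these overlaps, rather than any crude counting, is where the real content of the theorem lies.
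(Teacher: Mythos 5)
First, a point of orientation: the statement you were asked to prove is quoted in this paper from \cite{KKMO} and is not proved here at all, so there is no ``paper's own proof'' to match; the fair benchmark is the machinery this paper uses for its main theorems, which is the same style of argument as in \cite{KKMO}. Measured against that, your proposal has a genuine gap, and it sits exactly where you yourself flag it. Your two preliminary facts are correct and standard: in a minimum-leaf spanning tree with at least three leaves, any leaf-to-leaf path passes through a branch vertex, so adding an edge between adjacent leaves and deleting a cycle edge at that branch vertex loses a leaf, whence the leaves are independent; and claw-freeness then caps each vertex at two leaf-neighbours. But everything after that is written in the conditional mood (``I would impose a secondary extremal condition\dots'', ``then show\dots''). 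The decisive claim --- that some $k+3$ leaves have pairwise disjoint neighbourhoods --- is never proved, and it does not follow from anything you establish: since a vertex may legitimately be adjacent to \emph{two} leaves, the $\ell$ leaves could pairwise share neighbours (a different shared vertex for each pair), in which case no subset of them has disjoint neighbourhoods. Neither the unspecified secondary extremal condition nor the unspecified local exchanges is shown to eliminate such overlaps, and this elimination is precisely the content of the theorem, as your own tightness remark concedes.

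It is also worth saying that the established proofs do not prove disjointness at all; they prove something weaker and more structural, which is why they are long. In this paper's Steps 1--4 (and similarly in \cite{KKMO}), one fixes a tree with few leaves, imposes a lexicographic minimality condition on distances between branch vertices (the conditions (C1)--(C4)), performs a case analysis over the possible shapes of $T$, decomposes $V(G)$ into the regions $B_i$ and the paths joining branch vertices, and proves region-by-region bounds of the form $\sum_j |N_G(u_j)\cap B_i|\le |B_i|$ or $|B_i|-1$. These bounds come from injection arguments --- if $x\in N(u_j)\cap B_i$ then the predecessor $x^{-}$ is adjacent to no other member of the independent set --- combined with dozens of tree exchanges whose validity depends on the particular shape of $T$ in each case. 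Vertices adjacent to two members of the independent set are permitted and are absorbed by this bookkeeping rather than excluded. Your proposal contains none of this apparatus, nor any substitute for it (for general $k$ one would also expect an induction on $k$ or on the tree shapes, since their number grows with $k$). Finally, note that in a from-scratch proof of this statement you cannot invoke any bounded-leaf result of the same type to get a starting tree --- your choice of a minimum-leaf tree is the right substitute --- but that only makes the missing case analysis more, not less, necessary. As it stands, the proposal proves the two preliminary lemmas, correctly diagnoses why crude averaging fails, and stops short of the theorem.
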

On the other hand, in 2014, Matsuda, Ozeki and Yamashita proposed the following conjecture.
\begin{conjecture}[{\cite[Matsuda et al.]{MOY}}]\label{conj1}
Let $k$ be a non-negative interger and let  $G$ be a connected claw-free graph of order $n$. If $\sigma_{2k+3}(G)\geq n-2$, then $G$ has a spanning tree with at most $k$ branch vertices. 
\end{conjecture}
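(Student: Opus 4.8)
The plan is to argue by contradiction using the standard extremal-tree method, and to prove the statement by induction on $k$; the crucial and genuinely hard case is the first nontrivial one, $k=2$ (that is, $\sigma_7(G)\ge n-2$), which is exactly the main result announced in the abstract. Suppose $G$ is a connected claw-free graph with $\sigma_{2k+3}(G)\ge n-2$ that has no spanning tree with at most $k$ branch vertices. Among all spanning trees of $G$ I would choose one, $T$, having the minimum number of branch vertices, and subject to that the minimum number of leaves, and subject to that the minimum total length of its end paths; by assumption $T$ has at least $k+1$ branch vertices. The first bookkeeping step is the tree identity $\sum_{b}(\deg_T(b)-2)=L-2$, where the sum is over the branch vertices and $L$ is the number of leaves. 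Since each branch vertex contributes at least $1$ to the left-hand side, a tree with at least $k+1$ branch vertices has at least $k+3$ leaves, so the leaves already supply $k+3$ of the $2k+3$ vertices I will need.

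Next I would extract structure from this extremal choice together with the claw-free hypothesis. If $b$ is a branch vertex with three tree-neighbours $x_1,x_2,x_3$ lying in distinct components of $T-b$, then claw-freeness forces an edge among them, say $x_1x_2\in E(G)$; the minimality of $T$ then restricts where such a chord can occur, because exchanging the tree edge $bx_1$ for $x_1x_2$ decreases $\deg_T(b)$ while increasing $\deg_T(x_2)$, and comparing the resulting branch and leaf counts excludes the dangerous configurations. Applying the same exchange idea to the support vertices of leaves and to the interior vertices of the end paths, I would show that no two leaves belonging to the same \emph{end structure} can be adjacent, and that each branch vertex is attached to end paths whose near ends furnish a second independent vertex. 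This is what should produce the factor of two: each of the (at least) $k+1$ branch vertices contributes one further independent vertex beyond the leaves, upgrading the initial set of size $k+3$ to an independent set $S$ with $|S|=2k+3$.

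The final step is to bound $\sum_{x\in S}\deg_G(x)$ from above by $n-3$, contradicting $\sigma_{2k+3}(G)\ge n-2$. The mechanism is that, since $S$ is independent and its vertices sit at the extremities of $T$, the neighbourhoods $N_G(x)$ for $x\in S$ are confined to essentially disjoint portions of the tree, namely their own end paths together with a bounded neighbourhood of the branch vertices; hence $\bigcup_{x\in S}N_G(x)$ cannot cover all of $V(G)$, and a careful count shows that at least three vertices are missed, which yields the bound $n-3$.

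The main obstacle will be the second step: simultaneously reaching the full count $2k+3$ and keeping $S$ both independent and degree-cheap. The count is delicate because one must squeeze two independent vertices out of each branch vertex, with claw-freeness and the extremality of $T$ as the only leverage; the troublesome configurations are those where the near-end vertices of two different branches coincide or are forced to be adjacent, and where a single branch vertex of large degree spawns many leaves but few \emph{slots} for the extra vertices. These must be resolved by ad hoc local exchanges, and this is, I expect, exactly why the full conjecture is hard and why one first settles $k=2$. The companion condition $\sigma_6(G)\ge n-5$ should then follow from the same scheme, with one fewer vertex to locate and a correspondingly relaxed degree target.
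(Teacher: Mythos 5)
Your proposal does not prove the statement, and it is worth saying plainly that neither does the paper: the statement is an open conjecture of Matsuda, Ozeki and Yamashita, of which the paper settles only the case $k=2$ (Theorem 1.4, under $\sigma_7(G)\ge n-2$), the case $k=1$ having been done in the original paper. Your sketch announces a proof ``by induction on $k$,'' but no inductive step ever appears: nowhere do you use the truth of the statement for $k-1$ to derive it for $k$, and no such reduction is known --- that is precisely why the conjecture is hard. What remains is a direct extremal argument, and there two essential ingredients are missing or wrong. First, your extremal choice (fewest branch vertices, then fewest leaves, then shortest end paths) gives no \emph{upper} bound on the number of leaves; the paper's analysis for $k=2$ is only possible because Theorem 1.2 (Kano et al., applied with parameter $2k=4$) supplies a spanning tree with at most $2k+2=6$ leaves, which together with the lower bound $5$ leaves only finitely many tree shapes --- the four Steps --- each of which is then killed by a separate exchange-and-count analysis. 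Without such a bound your configuration space is unbounded. Second, your mechanism for reaching $2k+3$ independent vertices (``each branch vertex furnishes a second independent vertex from the near ends of its end paths'') fails outright: claw-freeness forces the near ends attached to a common branch vertex to be \emph{adjacent} (this is the paper's very first claim in each Step, e.g.\ $v_1v_2,\,v_3v_4\in E(G)$ in Claim 2.1), so these vertices can never be added to an independent set. The paper instead adjoins some of the branch vertices themselves ($t$, and possibly $s$, $w$ or $z$, depending on the Step) to the set of leaves, and proving that this enlarged set $I$ is independent is a substantial part of the work that your plan defers to ``ad hoc local exchanges.''

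The final count is also misdescribed. It is not true that the neighbourhoods of the members of $S$ are ``essentially disjoint'' and jointly miss three vertices: a vertex of $G$ can be adjacent to two members of $I$ (claw-freeness only gives $N_3(I)=\emptyset$), so one must bound the sum $\sum_{j}|N_G(u_j)\cap R|$ \emph{with multiplicity} over each region $R$ of the tree (each branch $B_i$, each path segment $P_1,P_2$, and singletons such as $\{w\}$ or $\{s\}$). The paper does this via predecessor injections $x\mapsto x^{-}$ combined with statements of the form $N_2(I-\{u_i\})\cap B_i=\emptyset$, and the additive slack it extracts ($3$, $5$ or $7$ depending on the Step) varies with the configuration; it is not a uniform ``miss three vertices'' phenomenon. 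So the gap is not a fixable detail: the induction has no content, the finiteness of the case analysis is not secured, and the two genuinely hard parts --- building the independent set of size $2k+3$ and carrying out the multiplicity-respecting count --- are exactly the parts your proposal leaves open, which for general $k$ is the open problem itself.
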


In \cite{MOY}, the authors gave examples to show that Conjecture \ref{conj1} is optimal if it is correct and they also proved the conjecture while $k=1$. Motivating by the techniques in \cite{MOY}, \cite{Kyaw1} and \cite{CHH}, we would like to prove Conjecture \ref{conj1} for the case $k=2$. In particular, the main result is stated  as the following.
\begin{theorem}\label{thm-mainB}
Let $G$ be a connected claw-free graph of order $n$. If $\sigma_{7}(G)\geq n-2$, then $G$ has a spanning tree with at most two branch vertices. 	
\end{theorem}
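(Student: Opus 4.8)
The plan is to argue by contradiction, following the extremal-tree method used for the case $k=1$ in \cite{MOY} and refined with the leaf-exchange ideas of \cite{Kyaw1} and \cite{CHH}. Suppose $G$ has no spanning tree with at most two branch vertices. Since $G$ is connected it has spanning trees, and by assumption every spanning tree has at least three branch vertices. Among all spanning trees of $G$ I would first select one, $T$, whose set $B$ of branch vertices has minimum cardinality $b:=|B|\ge 3$, and then impose a secondary and tertiary extremal condition: subject to minimum $b$, maximize the number of leaves, and subject to that, minimize the total length of the \emph{legs} (the maximal paths of $T$ joining a branch vertex to a leaf through degree-two vertices). The purpose of this lexicographic choice is to make local rerouting arguments available: any edge of $G$ not in $T$ creates a cycle, and swapping it for a tree edge on that cycle must not improve the chosen triple $(b,-\#\text{leaves},\text{leg-length})$, which severely constrains where the endpoints in $G$ of edges at leaves and near branch vertices can lie.

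The two structural inputs I would combine are the extremality of $T$ and claw-freeness. The decisive consequence of claw-freeness is that for every vertex $v$ the neighborhood $N_G(v)$ contains no independent set of size three; equivalently, if $S\subseteq V(G)$ is independent then $|N_G(v)\cap S|\le 2$ for every $v$. I would first prove the standard facts that the leaf set $L$ of $T$ is independent in $G$ (otherwise an edge between two leaves yields a rerouting contradicting the choice of $T$), and that the behaviour at each branch vertex is tightly controlled: at a branch vertex $x$ the three tree-neighbors cannot be pairwise non-adjacent (a claw would appear at $x$), so two of them are $G$-adjacent, which by rerouting would lower $\deg_T(x)$ unless further adjacencies are blocked. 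Iterating these observations pins down, for each leaf $u$, a small region of $V(G)$ containing all of $N_G(u)$, shows that legs are short, and shows that distinct legs interact in $G$ only near the branch vertices.

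From $b\ge 3$ and the leaf–branch identity $\sum_{x\in B}(\deg_T(x)-2)=|L|-2$, I would extract a carefully chosen independent set $S$ with $|S|\ge 7 = 2\cdot 2+3$, built from the leaves of $T$ together with selected interior vertices of the legs (the second vertices of several legs, or the branch vertices themselves when the structure permits), using claw-freeness to guarantee independence. The final step is a counting argument: writing
\[
\sum_{v\in S}\deg_G(v)=\sum_{w\in V(G)\setminus S}\bigl|N_G(w)\cap S\bigr|,
\]
the claw-free bound gives each term at most $2$, but the crude estimate $2(n-|S|)$ is far too weak. The real work is to use the confinement of the leaf neighborhoods and the non-improvability of $T$ to show that only very few vertices $w$ satisfy $|N_G(w)\cap S|=2$, while a definite set of vertices contributes $0$, thereby forcing $\sum_{v\in S}\deg_G(v)\le n-3<n-2$, which contradicts $\sigma_7(G)\ge n-2$.

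The main obstacle, and the bulk of the proof, is exactly this last refinement together with the configuration analysis that feeds it. One must enumerate the ways in which three or more branch vertices and their legs can be arranged — whether branch vertices are mutually adjacent or joined by longer paths, whether legs have length one or more, and how the short-neighborhood conclusions overlap near a common branch vertex — and in each configuration produce a valid independent set of size at least seven whose members have almost disjoint neighborhoods. Controlling the doubly counted vertices (those adjacent to two elements of $S$) via repeated use of claw-freeness and the extremality of $T$ is the delicate point; the degenerate cases, for instance when some branch vertex has large degree so that $T$ is close to a double spider, require separate direct treatment. Once every configuration is dispatched, the degree-sum contradiction closes the proof.
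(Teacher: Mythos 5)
Your overall framework---argue by contradiction, fix an extremal spanning tree, prove the leaf set is independent, use claw-freeness to get $|N_G(w)\cap S|\le 2$ for every vertex $w$ and an independent $S$, and then refine the count to force $\deg(S)\le n-3$---is the same skeleton as the paper's proof. But there is a concrete missing idea that your plan, as written, cannot get around: you never bound the number of leaves of the tree you work with. The paper's first and decisive move is to invoke Theorem~\ref{theorem KKMO} with $k=4$ (legitimate since $\sigma_7(G)\ge n-2\ge n-6$) to obtain a spanning tree with at most $6$ leaves; combined with $|B(T)|\ge 3$, which forces $|L(T)|\ge 5$, this pins the tree down to a handful of explicit shapes (the four configurations of Steps 1--4), and the entire proof is a finite case analysis over them. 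Your extremal choice (minimize $|B(T)|$, then maximize leaves, then minimize leg lengths) gives no such finiteness: a tree with three branch vertices of large degree has arbitrarily many leaves, so your $7$-element set $S$ cannot ``own'' all the regions of $T$. The refined count you need is, in the notation $a_i=\#\{w: |N_G(w)\cap S|=i\}$, exactly $a_2-a_0\le 4$; the paper achieves it by the injection $x\mapsto x^{-}$ inside each branch $B_i$ owned by a member of $I$ (Claims~\ref{claim4}, \ref{claim5}), and this mechanism simply does not exist for branches hanging off leaves that you discarded from $S$. So the ``enumeration of configurations'' you defer to is not a finite enumeration, and the degenerate case you flag in passing (a branch vertex of large degree) is not a side case: it is precisely the case your setup fails to exclude, and excluding it is the content of the cited leaf-count theorem, which is a substantial result in its own right rather than something recoverable by local swaps.

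A second, smaller defect: your secondary condition ``maximize the number of leaves'' points the wrong way. Since $|L(T)|=2+\sum_{x\in B(T)}(\deg_T(x)-2)$, maximizing leaves inflates branch degrees, which is what must be prevented; moreover, for adjacent tree-neighbors $y,z$ of a branch vertex $x$ of degree at least $4$ with $y$ a leaf, the swap $T+yz-xz$ decreases the number of leaves without increasing the number of branch vertices, so it would contradict leaf-\emph{minimality} but is perfectly consistent with leaf-maximality---under your choice this swap yields no information at all. Note also that the paper's extremal conditions (C1)--(C4) are not leg-length conditions: they lexicographically minimize the tree distances between branch vertices, which is what powers claims such as $v_1s^{-}\notin E(G)$ and the reductions between cases. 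If you replace your extremal choice by: first apply Theorem~\ref{theorem KKMO} to get $5\le |L(T)|\le 6$, then among such trees minimize the branch-vertex distances, your outline becomes essentially the paper's proof; without that replacement the argument does not close.
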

Moreover, by using a part of the proof of Theorem \ref{thm-mainB} we also give an another result which gives an improvement of the result of Theorem \ref{thm1} while $k=3.$ We will prove the following theorem.
\begin{theorem}\label{thm-mainA}
Let $G$ be a connected claw-free graph of order $n$. If $\sigma_{6}(G)\geq n-5$, then $G$ has a spanning tree with at most two branch vertices. 
\end{theorem}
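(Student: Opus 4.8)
The plan is to argue by contradiction, bootstrapping from the tree that Theorem \ref{thm1} already provides. Since $\sigma_6(G)\ge n-5$ is precisely the hypothesis of Theorem \ref{thm1} with $k=3$, that theorem hands us a spanning tree of $G$ with at most three branch vertices, so the task is to improve ``three'' to ``two''. Suppose, for contradiction, that $G$ has no spanning tree with at most two branch vertices. Then every spanning tree with the fewest branch vertices has exactly three of them, and among all such trees I would fix one, call it $T$, that is extremal: it first minimizes the number of branch vertices (hence this number is three) and then minimizes the number of leaves, with a further tie-break such as minimizing the total length of the legs (the paths running from a branch vertex out to a leaf). The whole argument then reduces to showing that this extremal $T$ forces an independent set of six vertices whose degrees sum to at most $n-6$, contradicting $\sigma_6(G)\ge n-5$.

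The first structural step is to pin down the shape of $T$. Because the three branch vertices are the only vertices of degree at least three, contracting all degree-two vertices turns $T$ into a tree whose sole internal nodes are those three vertices; as the only tree on three nodes is a path, the branch vertices lie on a path $x_1,x_2,x_3$ linked by paths of degree-two vertices, with $x_1$ and $x_3$ each carrying at least two legs and $x_2$ at least one. In particular $T$ has at least five leaves, and equality forces every $x_i$ to have degree exactly three. I would record, for each leg, its first vertex (the neighbor of the branch vertex) and its terminating leaf.

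The heart of the proof — and the part I expect to reuse from the proof of Theorem \ref{thm-mainB} — is a family of local exchange arguments driven by claw-freeness. The governing fact is that $\alpha(G[N_G(v)])\le 2$ for every vertex $v$, so among any three neighbors of a branch vertex two are joined by an edge of $G$. Each such edge, combined with the extremality of $T$, either lets me reroute a tree edge so that some branch vertex drops to degree two — lowering the number of branch vertices and contradicting the choice of $T$ — or else pins down an adjacency pattern that feeds the final count. I would complement this with the standard leaf-exchange move: for a leaf $u$ and any $w\in N_G(u)$, the graph $T+uw$ has a unique cycle, and deleting a suitable edge of that cycle yields another spanning tree; minimality of the number of leaves then restricts the location of $w$ and thereby confines $N_G(u)$ to a small, leg-local region of the tree.

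Finally I would assemble the contradiction. The exchange arguments render a suitable six-element set $\{u_1,\dots,u_6\}$ — the ends of the legs, supplemented by one extra vertex furnished by the analysis at $x_2$ when $T$ has only five leaves — independent, while confining the neighborhood of each $u_i$ to the leg on which it sits. Since distinct legs meet only at the branch vertices, these neighborhoods are essentially pairwise disjoint and avoid the six chosen vertices, so their sizes sum to at most $n-6$; that is, $\sum_{i=1}^{6}\deg_G(u_i)\le n-6$, contradicting $\sigma_6(G)\ge n-5$. The step I expect to be the real obstacle is making the case analysis exhaustive. Short legs, and especially legs of length one, behave differently from long ones under rerouting, because moving an edge onto an interior vertex of a long leg can create a new branch vertex instead of destroying one; and the degenerate configurations — several legs of length one, or $x_2$ carrying exactly one leg so that only five leaves are available — must each be treated separately to produce the required six independent vertices meeting the degree bound.
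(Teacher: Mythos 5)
There is a genuine gap, and it occurs at the very first step. You bootstrap from Theorem \ref{thm1} (with $k=3$), which only guarantees a spanning tree with at most three \emph{branch vertices}; it says nothing about their degrees or about the number of leaves. A tree with exactly three branch vertices can have branch vertices of arbitrarily large degree, hence arbitrarily many leaves, and none of your extremal criteria (fewest branch vertices, then fewest leaves, then shortest legs) bounds the number of leaves by five or six --- proving such a bound under $\sigma_6(G)\ge n-5$ is precisely the content of Theorem \ref{theorem KKMO}, which you never invoke and would in effect have to reprove. The paper instead applies Theorem \ref{theorem KKMO} with $k=3$ to get a spanning tree with at most $5$ leaves; since the contradiction hypothesis forces $|L(T)|\ge 2+|B(T)|\ge 5$, that tree has exactly $5$ leaves and exactly three branch vertices, each of degree $3$. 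This is exactly the configuration of Step 1 of the proof of Theorem \ref{thm-mainB}, whose claims are then reused with the six-element independent set $X=\{u_1,\dots,u_5,t\}$ (the five leaves plus one \emph{outer} branch vertex, not a vertex produced at the middle one). Your structural conclusions (``equality forces every $x_i$ to have degree exactly three'', the six-element set built from leg-ends) are only valid in this $5$-leaf case, which you never reach. Worse, at a branch vertex of degree $\ge 4$ the exchange arguments lose their force: for instance, rerouting $T+v_1s^{-}-sv_1$ at a degree-$4$ vertex $s$ leaves $s$ a branch vertex and may create a new one at $s^{-}$, so no contradiction with your extremal choice arises, and adjacencies that the paper excludes (Claim \ref{claim1}) can no longer be excluded.

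A second, independent problem is the final count. Claw-freeness and the exchange moves do \emph{not} confine $N_G(u_i)$ to the leg containing $u_i$: a leaf may well have neighbors on other legs (the claims only restrict the \emph{predecessors} of such neighbors). So your assertion that the six neighborhoods are ``essentially pairwise disjoint'' is false even in the $5$-leaf case, and the inequality $\sum_i\deg_G(u_i)\le n-6$ cannot be obtained that way. The paper's count works region by region: inside each $B_i$, the sets $\{u_i\}$, $N(u_i)\cap B_i$ and $(N(I\setminus\{u_i\}))^{-}\cap B_i$ are pairwise disjoint (Claims \ref{claim4} and \ref{claim5}), the predecessor map being injective because no vertex of $B_i$ has two neighbors in $I\setminus\{u_i\}$ (else a claw); this yields $\sum_j|N_G(u_j)\cap B_i|\le |B_i|$, respectively $|B_i|-1$, and similar bounds on the paths, which sum to $\deg(X)\le n-6$. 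To repair your argument, replace the appeal to Theorem \ref{thm1} by Theorem \ref{theorem KKMO} and replace the disjointness claim by this region-by-region injection count.
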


\section{Proof of Theorem \ref{thm-mainB} and Theorem \ref{thm-mainA} }
\vspace{0.5cm}

Before proving the theorems we give some notations for convenience.

Let $T$ be a spanning tree of $G.$ Setting $L(T)$ and $B(T)$ the set of leaves and the set of branch vertices of the tree $T,$ respectively. For $u, v \in V(T),$ denote by $P_T[u, v]$ the unique  path in $T$ connecting $u$ and $v.$ We assign an orientation in $P_T[u, v]$ from $u$ to $v.$

For a subset $X$ in $V(G),$ set $N(X) = \{x\in V(G) |\ xy \in E(G) \text{ for some $y \in X$}\}$ and $\deg (X) = \sum_{x\in X}\deg_G(x)$. For an integer $k \geq 1,$ we denote $N_k(X) = \{x \in V(G)\ |\ |N(x) \cap X| = k\}.$

{\bf Proof of Theorem \ref{thm-mainB}}

 Suppose that $G$ has no spanning tree with at most 2 branch vertices. Let $T$ be a spanning tree of $G.$ Then $|B(T)| \geq 3$  and  we have the following
\begin{equation*}
|L(T)|=2+\sum\limits_{v\in B(T)}(\deg_T(v)-2)\geq 2+3.(3-2)=5.
\end{equation*}
On the other hand, by Theorem \ref{theorem KKMO} we conclude that $G$ has a spanning tree with at most  6 leaves. Therefore, $G$ has a spanning tree $T$ such that $5 \leq |L(T)| \leq 6.$\\
Now we will prove Theorem  \ref{thm-mainB} by giving some contradictions in four steps.

\vspace{0.3cm}
{\bf Step 1}. If there exists a spanning tree $T$ of $G$ such that $|L(T)|= 5$ and $T$ has exactly 3 branch vertices $s, w, t$ of degree 3, where $w\in P_T[t,s]$ (see figure 1). 

\begin{figure}[h]
	\centering
	\includegraphics[width=0.4\linewidth]{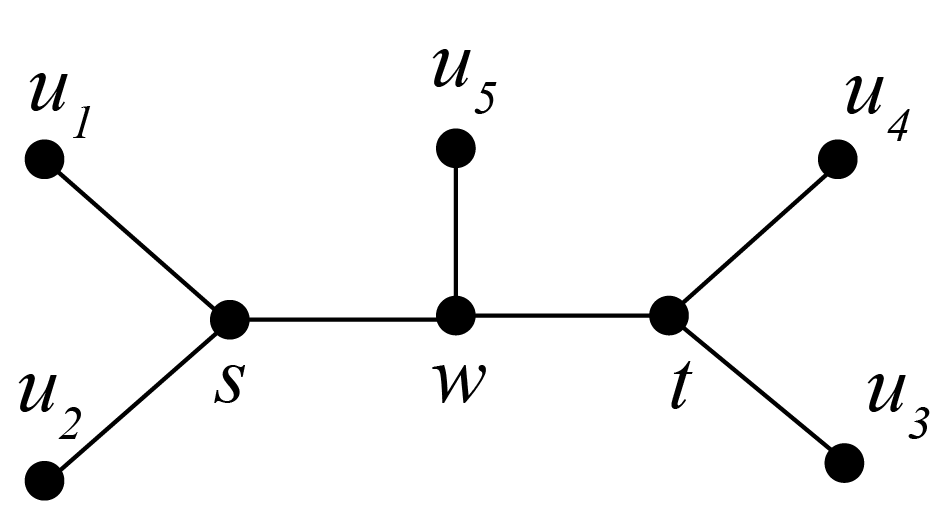}
	\caption[Tree T]{The tree $T$ is in Step 1}
	\label{Pic1}
\end{figure}

Let $L(T)=\{u_1, u_2, u_3, u_4, u_5\}$ be the set of leaves of  $T$.
Let $B_i$ be a vertex set of components of $T-\{s,w,t\}$ such that $L(T)\cap B_i=\{u_i\}$ for $1\leq i\leq 5$ and the only vertex of $N_T\{s,w,t\}\cap B_i$ is denoted by $v_i$. Without loss of generality, we may assume that $B_i\cap N_T(s) \not= \emptyset\ (1\leq i \leq 2), B_j\cap N_T(t)\not= \emptyset\ (3\leq j \leq 4)$ and $B_5\cap N_T(w)\not= \emptyset.$ Set $P_1=V(P_T[w, s]-\{w,s\}),\ P_2=V(P_T[t, w]-\{t,w\})$ and $P=P_1\cup P_2$. Set $r_1=d_T(s,t), r_2=d_T(s,w)$.  For each $x\in P_T[t, s] $ or $ P_T[s, u_i], (1\leq i \leq 2)$ or $ P_T[t, u_j](3\leq j \leq 4)$ or $P_T[w, u_5])$, its successor $x^{+}$ and the predecessor $x^{-}$ are defined, if they exist.  

We choose the tree $T$ such that:\\
(C1) $(r_1;r_2)$ is as small as possible in lexicographic order.

Set $I=\{u_1;u_2;u_3;u_4;u_5;u_6=t;u_7=s\}$.
\begin{claim}\label{claim1} We have\
	$v_1s^{-}, v_2s^{-}, v_3t^{+}, v_4t^{+}\notin E(G)$ and $v_1v_2, v_3v_4\in E(G)$.
\end{claim}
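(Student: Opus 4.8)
The plan is to split the claim into its two halves and treat them by different mechanisms: the four non-adjacencies come from an edge-exchange (tree rotation) argument against the minimality condition (C1), while the two adjacencies $v_1v_2,v_3v_4\in E(G)$ follow immediately from claw-freeness once the non-adjacencies are in hand. Throughout I will use that, since $s,w,t$ have degree $3$ in $T$ and $w\in P_T[t,s]$, the neighbours of $s$ in $T$ are exactly $N_T(s)=\{v_1,v_2,s^{-}\}$ (with $s^{-}$ the neighbour of $s$ on $P_T[w,s]$) and, symmetrically, $N_T(t)=\{v_3,v_4,t^{+}\}$.

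To prove $v_1s^{-}\notin E(G)$ I will argue by contradiction. Assuming $v_1s^{-}\in E(G)$, I form the spanning tree $T'=(T-sv_1)+v_1s^{-}$: deleting $sv_1$ splits $T$ into the component $B_1$ (containing $v_1$) and the rest (containing $s^{-}$), and the new edge $v_1s^{-}$ reconnects them, so $T'$ is indeed spanning. The effect on degrees is that $\deg_{T'}(s)=2$ while $\deg_{T'}(s^{-})=\deg_T(s^{-})+1$, and the leaf set is unchanged. Here two cases arise, and handling the degenerate one correctly is the only real subtlety. If $s^{-}\neq w$, then $s^{-}$ had degree $2$ and now has degree $3$, so $T'$ again has exactly three degree-$3$ branch vertices $s^{-},w,t$ with $w\in P_{T'}[t,s^{-}]$ and with $d_{T'}(s^{-},t)=d_T(s,t)-1=r_1-1$; this is a Step-1 tree with strictly smaller $(r_1;r_2)$ in lexicographic order, contradicting (C1). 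If instead $s^{-}=w$ (the case $P_1=\emptyset$), then $w$ acquires degree $4$, so $T'$ has only the two branch vertices $w$ and $t$, contradicting the standing assumption that $G$ has no spanning tree with at most two branch vertices. Either way we reach a contradiction, so $v_1s^{-}\notin E(G)$. The same exchange with $v_2$ in place of $v_1$ gives $v_2s^{-}\notin E(G)$, and the mirror-image exchanges at $t$ (forming $(T-tv_3)+v_3t^{+}$, etc., and using $d_T(s,t^{+})=r_1-1$ or $\deg(w)=4$) give $v_3t^{+},v_4t^{+}\notin E(G)$.

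For the second half I will invoke the claw-free hypothesis at the degree-$3$ vertices. Since $\{v_1,v_2,s^{-}\}\subseteq N_G(s)$, these three vertices cannot be pairwise non-adjacent, for otherwise $s$ together with $v_1,v_2,s^{-}$ would induce a claw $K_{1,3}$; hence at least one of $v_1v_2,\,v_1s^{-},\,v_2s^{-}$ lies in $E(G)$. The first part already excludes $v_1s^{-}$ and $v_2s^{-}$, forcing $v_1v_2\in E(G)$. The identical argument applied to $N_G(t)\supseteq\{v_3,v_4,t^{+}\}$, together with $v_3t^{+},v_4t^{+}\notin E(G)$, yields $v_3v_4\in E(G)$, completing the claim. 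The main thing to watch is the bookkeeping in the rotation step—verifying that the rotated tree genuinely stays inside the Step-1 configuration so that (C1) may be applied, and separating off the $s^{-}=w$ (resp.\ $t^{+}=w$) degeneracy where the contradiction comes instead directly from producing a tree with only two branch vertices.
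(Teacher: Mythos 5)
Your proof is correct and takes essentially the same approach as the paper: the same edge exchange $T'=T+v_1s^{-}-sv_1$ (and its three symmetric variants), which either decreases $(r_1;r_2)$ and contradicts (C1) or produces a tree with only two branch vertices, followed by claw-freeness at $s$ and at $t$ to force $v_1v_2,v_3v_4\in E(G)$. Your explicit case split ($s^{-}\neq w$ versus $s^{-}=w$) simply spells out what the paper compresses into the phrase that $T'$ contradicts (C1) or has two branch vertices.
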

\begin{proof}
	If $v_1s^{-}\in E(G)$ then we consider the tree $T'=T+v_1s^{-}-sv_1.$ It makes a contradiction with the condition (C1) or $T'$ has two branch vertices. Hence $v_1s^{-}\notin E(G)$.\\
	Similarly, we also have \ $v_2s^{-}, v_3t^{+}, v_4t^+\notin E(G)$.\\
	Now combining with the properties of the claw-free graph $G$ we obtain $v_1v_2, v_3v_4\in E(G)$.
\end{proof}
\begin{claim}\label{claim2} 
	$I$ is an independent set.
\end{claim}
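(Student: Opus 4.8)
The plan is to show that $I$ is independent by ruling out, one at a time, every potential edge joining two of its seven vertices; in each case I aim to produce either a spanning tree with at most two branch vertices (contradicting the standing assumption) or a Step 1 tree violating the minimality condition (C1). It is convenient to group the edges according to how the relevant $T$-path sits relative to the central branch vertex $w$.

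First I would dispose of all edges whose unique $T$-path runs through $w$: the edge $st$; the edges $su_3,su_4,tu_1,tu_2$; every edge $u_iu_j$ with $i\in\{1,2\}$ and $j\in\{3,4\}$; and every edge incident with $u_5$ (to $s$, to $t$, or to one of $u_1,\dots,u_4$), including $su_5$ and $tu_5$. Suppose such an edge $e$ lies in $E(G)$. Then $T+e$ contains a cycle through $w$, and I can delete from this cycle an edge incident with $w$ (for instance $wv_5$ when the leg $B_5$ is reconnected through $e$, or a path-edge at $w$ otherwise). The result is again a spanning tree in which $\deg(w)=2$, so $w$ is no longer a branch vertex and only $s,t$ remain, giving at most two branch vertices — a contradiction. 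Second, for the two same-side leaf pairs $u_1u_2$ and $u_3u_4$ the $T$-path passes only through $s$ (respectively $t$): if $u_1u_2\in E(G)$ then $T+u_1u_2-sv_1$ is a spanning tree with $\deg(s)=2$, leaving only $w,t$ as branch vertices, and $u_3u_4$ is symmetric.

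The genuinely delicate case, which I expect to be the main obstacle, is ruling out the adjacencies $su_1,su_2$ (and, symmetrically, $tu_3,tu_4$). Here the naive cycle through $s$ is useless, since the new edge is itself incident with $s$, so no deletion lowers $\deg(s)$. To treat $su_1\in E(G)$ I would invoke the claw-free hypothesis at $s$, whose $T$-neighbours are $v_1,v_2,s^-$: the triple $\{u_1,v_2,s^-\}\subseteq N(s)$ cannot be independent, and $v_2s^-\notin E(G)$ by Claim \ref{claim1}, so either $u_1v_2\in E(G)$ or $u_1s^-\in E(G)$. In the first case the swap $T-sv_2+u_1v_2$ reconnects $B_2$ through the leg $B_1$ while dropping $\deg(s)$ to $2$, again leaving only two branch vertices. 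In the second case the swap $T-sv_1+u_1s^-$ reconnects $B_1$ through $s^-$: if $s^-=w$ this leaves only $w,t$ as branch vertices, while if $s^-\neq w$ it turns $s^-$ into a branch vertex of degree $3$ in place of $s$, producing a Step 1 tree whose branch vertices $s^-,w,t$ satisfy $d_{T'}(s^-,t)=r_1-1$, contradicting the lexicographic minimality (C1).

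Combining the three groups, every edge internal to $I$ has been excluded, so $I$ is independent; the adjacencies $su_2$, $tu_3$, $tu_4$ follow from $su_1$ by the evident symmetries $v_1\leftrightarrow v_2$ and $s\leftrightarrow t$ (using the corresponding statements $v_1s^-,v_3t^+,v_4t^+\notin E(G)$ of Claim \ref{claim1}). I would remark that the same rerouting that kills $su_1$ also forces $u_i\neq v_i$ for $i\in\{1,2,3,4\}$: were a leg trivial, its branch-vertex edge would be exactly one of the edges just excluded, so the legs at $s$ and $t$ automatically have length at least two.
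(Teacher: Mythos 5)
Your proof is correct: all twenty-one pairs of vertices of $I$ are ruled out, and each exchange you propose yields either a spanning tree with only two branch vertices or a Step 1 tree contradicting (C1), so the claim follows. The overall mechanism (edge-by-edge elimination by tree swaps) is the same as the paper's, but your route differs in two respects. First, your unified treatment of all edges whose tree-path passes through $w$ --- delete one of the two cycle edges at $w$, so that $w$ drops to degree two while the endpoints of the new edge rise only to degree two (or $s,t$ to degree four) --- consolidates what the paper does piecemeal: the paper deletes $v_iv_i^{-}$ for leaf--leaf edges and $ww^{-}$ for the edges into $u_6=t$. Second, and more substantively, for the crucial adjacencies $su_1,su_2,tu_3,tu_4$ the paper argues differently from you: it invokes the \emph{positive} conclusion $v_1v_2\in E(G)$ (resp.\ $v_3v_4\in E(G)$) of Claim \ref{claim1} and performs the double exchange $T+su_i+v_1v_2-sv_1-sv_2$, which in one stroke produces a spanning tree whose only branch vertices are $w,t$ --- no case distinction and no further appeal to (C1). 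You instead build a fresh claw at $s$ on $\{u_1,v_2,s^{-}\}$, use the \emph{negative} conclusion $v_2s^{-}\notin E(G)$ of Claim \ref{claim1}, and split into the cases $u_1v_2\in E(G)$ and $u_1s^{-}\in E(G)$, the latter requiring the lexicographic minimality (C1). Both arguments are valid; the paper's construction is shorter and keeps every contradiction in Claim \ref{claim2} at the level of ``two branch vertices,'' while yours re-derives the needed adjacency from claw-freeness on the spot and, as a bonus, makes explicit that independence of $I$ forces $u_i\neq v_i$ for $i\in\{1,2,3,4\}$, a fact the paper only uses implicitly later.
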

\begin{proof}
	For each $1\leq i< j\leq 5,$ if $u_iu_j\in E(G)$ then we consider the tree 	
$$T' = T+u_iu_j-v_iv_i^{-}.$$
	The resulting tree is a spanning tree of $G$ with two branch vertices, this gives a contradiction. \\
	For $i\in \{1;2;5;7\},$ if $u_iu_6\in E(G)$ then we consider the tree 	
$T'=T+u_iu_6-ww^{-}.$
	The resulting tree is a spanning tree of $G$ with two branch vertices, which is a contradiction. If $u_iu_6\in E(G)$ for some $i\in \{3; 4\}$ then by Claim \ref{claim1} the tree $T'=T+u_6u_i+v_3v_4-u_6v_3-u_6v_4$ is a spanning tree of $G$ with two branch vertices. This also gives a contradiction.\\
Similarly, we also have $u_7u_i\notin E(G)$ for all $1\leq i\leq 5$.\\
	Claim \ref{claim2} is proved.
\end{proof}

Since $G$ is claw-free and Claim \ref{claim2} holds we have $N_3(I)=\emptyset.$
\begin{claim}\label{claim3}
	$v_i\notin N(u_j)$ for all \ $1\leq i\not= j\leq 5$, and $u_6v_1, u_6v_2, u_6v_5, u_7v_3, u_7v_4, u_7v_5\not\in E(G)$.
\end{claim}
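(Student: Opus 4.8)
The plan is to prove each non-adjacency asserted in Claim~\ref{claim3} by the same kind of single edge swap already used for Claim~\ref{claim1} and Claim~\ref{claim2}: assuming one of the forbidden edges lies in $E(G)$, I would produce a spanning tree $T'$ of $G$ with at most two branch vertices, contradicting the standing assumption that no such tree exists. The structural facts I would lean on throughout are that each of $s,w,t$ has degree exactly $3$ in $T$, and that every component $B_i$ is actually a path from $v_i$ to $u_i$. The latter holds because $u_i$ is the only leaf of $T$ inside $B_i$ and no branch vertex of $T$ lies in $B_i$ (all of them are $s,w,t$, which are deleted); hence every vertex of $B_i$ has $T$-degree at most $2$, so in particular $v_i$ has degree $2$, one of its neighbours being the attaching branch vertex $v_i^{-}$.

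For the first assertion, suppose $v_iu_j\in E(G)$ with $i\neq j$, and let $a=v_i^{-}$ be the branch vertex carrying $B_i$. I would set $T'=T+v_iu_j-v_iv_i^{-}$. Deleting $av_i$ detaches the path $B_i$, and the new edge $v_iu_j$ reattaches it through the leaf $u_j$, so $T'$ is again a spanning tree. In passing to $T'$ the vertex $a$ drops from degree $3$ to $2$, while $v_i$ keeps degree $2$ (it trades $a$ for $u_j$) and $u_j$ only rises from $1$ to $2$; thus $a$ stops being a branch vertex and none is created, leaving just the two branch vertices of $\{s,w,t\}\setminus\{a\}$, a contradiction.

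For the remaining assertions the idea is instead to reroute a whole component onto the \emph{far} branch vertex. For example, if $u_6v_1=tv_1\in E(G)$, I would take $T'=T+tv_1-v_1v_1^{-}=T+tv_1-sv_1$, which detaches $B_1$ from $s$ and reattaches it to $t$. Then $s$ falls to degree $2$ and is no longer a branch vertex, while $t$ rises to degree $4$ but remains a single branch vertex, so $T'$ has exactly the two branch vertices $w$ and $t$. The same recipe, deleting the tree edge $v_kv_k^{-}$ joining $v_k$ to its branch vertex and inserting the assumed edge, disposes of $tv_2$ and $tv_5$, and symmetrically of $sv_3,sv_4,sv_5$: in each case one of $s,w,t$ loses an incident edge and drops to degree $2$, the target absorbs the component and survives as a lone (possibly higher-degree) branch vertex, and exactly two branch vertices remain.

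Since each case is a short edge swap, the only real work is the degree bookkeeping, and the one place I expect to have to argue rather than merely compute is the preliminary observation that each $B_i$ is a path, so that $v_i$ and $u_i$ carry the low degrees the swaps require. Once that is secured, every case collapses to the uniform rule ``delete $v_kv_k^{-}$, add the forbidden edge,'' and the contradiction is immediate from the degree count; so I do not anticipate a serious obstacle here beyond checking that, in each instance, the deleted edge is incident to a genuine degree-$3$ branch vertex and that the inserted edge raises no degree past $2$ except at a vertex that was already branching.
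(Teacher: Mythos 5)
Your proposal is correct and is essentially the paper's own argument: for each forbidden edge the paper forms $T'=T+(\text{edge})-v_iv_i^{-}$, so the branch vertex carrying $B_i$ drops to degree $2$ while no new branch vertex arises (the other endpoint is either a leaf $u_j$ rising only to degree $2$, or $s$/$t$ which was already branching), giving a spanning tree with exactly two branch vertices and the desired contradiction. The only slip, which is harmless, is your claim that $v_i$ always has degree $2$: when $B_i$ is a singleton, $v_i=u_i$ is a leaf of degree $1$, but since the swap leaves the degree of $v_i$ unchanged in every case, the bookkeeping and the conclusion go through verbatim.
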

\begin{proof}
	For all  $1\leq i\not= j\leq 5$, if $v_iu_j\in E(G)$ then we consider the tree
	$$	T' =T+v_iu_j-v_iv_i^{-}.$$
	The resulting tree has two branch vertices. This gives a contradiction.\\
	Now if $u_6v_i\in E(G)$, for some $i\in\{1;2;5\}$ then the tree
	$	T' = T+u_6v_i-v_iv_i^{-} $	is a spanning tree of $G$ with two branch vertices, a contradiction.\\
Similarly, we also get $u_7v_3, u_7v_4, u_7v_5\notin E(G)$.
\end{proof}
\begin{claim}\label{claim4}
	For all $1\leq i\leq 5$, $1\leq j\leq 7$, $j\not= i,$ if $x\in B_i\cap N(u_j)$  then
\begin{enumerate}
	\item[{\rm(a)}] $x\not= u_i$,
	\item[{\rm(b)}]  $x\not= v_i$ if $j\in \{1,2,3,4,5\}$ ,
	\item[{\rm(c)}]  $x\not= v_1, v_2, v_5$ if  $j=6$, 
	\item[{\rm(d)}] $x\not= v_3, v_4, v_5$ if $j=7$, 
	\item[{\rm(e)}] $x^{-}\notin N(I-\{u_j\})$.
\end{enumerate}
\end{claim}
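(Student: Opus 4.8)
The plan is to read parts (a)--(d) off immediately from Claims \ref{claim2} and \ref{claim3}, and then to prove the substantive assertion (e) by an edge-exchange argument.

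For (a), if $x=u_i$ then $u_iu_j\in E(G)$ with $u_i,u_j\in I$, which is impossible since $I$ is independent (Claim \ref{claim2}). For (b), if $x=v_i$ with $j\in\{1,\dots,5\}$ then $v_iu_j\in E(G)$ with $1\le i\ne j\le 5$, contradicting the first part of Claim \ref{claim3}. For (c) (resp.\ (d)), taking $j=6$ and $x\in\{v_1,v_2,v_5\}$ (resp.\ $j=7$ and $x\in\{v_3,v_4,v_5\}$) would produce one of the edges $u_6v_1,u_6v_2,u_6v_5$ (resp.\ $u_7v_3,u_7v_4,u_7v_5$) excluded in Claim \ref{claim3}. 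Thus (a)--(d) require no new exchange.

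For (e) I would first record that each $B_i$ is a path from $v_i$ to $u_i$: since $v_i$ is the only vertex of $B_i$ adjacent in $T$ to $\{s,w,t\}$ and every branch vertex of $T$ lies in $\{s,w,t\}$, all vertices of $B_i$ have $T$-degree at most two, so $B_i$ is a path along which $x^-$ runs toward $v_i$ and $x^+$ toward $u_i$, with $v_i^-\in\{s,w,t\}$ the branch vertex carrying $B_i$. Suppose, for contradiction, that $x\in B_i\cap N(u_j)$ ($j\ne i$) and $x^-\in N(u_\ell)$ for some $u_\ell\in I\setminus\{u_j\}$. By (a) we have $x\ne u_i$, and if $x=v_i$ then $x^-=v_i^-$ is a branch vertex; by (b)--(d) this survives only when $v_i^-=u_j$, in which case $x^-=u_j\in I$ cannot be adjacent to $u_\ell\in I\setminus\{u_j\}$ by independence of $I$, a contradiction. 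Hence we may assume $x$ is interior to the path $B_i$, so that $x^-\in B_i$ is a non-branch vertex and both $xu_j$ and $x^-u_\ell$ are genuine non-tree edges. In the principal case, where neither $u_j$ nor $u_\ell$ equals $v_i^-$, I would take
\[
T'=T+xu_j+x^-u_\ell-v_iv_i^- - x^-x .
\]
Deleting $v_iv_i^-$ detaches $B_i$ and deleting $x^-x$ splits it into the subpaths $\{v_i,\dots,x^-\}$ and $\{x,\dots,u_i\}$; the edge $xu_j$ reattaches the second to $T-B_i$ (as $u_j\notin B_i$), while $x^-u_\ell$ reattaches the first either to $T-B_i$ or, when $\ell=i$, to the second subpath through $u_i$. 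In every case $T'$ is connected with $n-1$ edges, hence a spanning tree. The swap leaves $\deg x$ and $\deg x^-$ unchanged, raises only $\deg u_j$ and $\deg u_\ell$ by one (turning leaves into degree-two vertices and keeping $s,t$ of degree $\ge 3$), and lowers $\deg v_i^-$ from three to two; so the branch vertices of $T'$ are exactly the two members of $\{s,w,t\}\setminus\{v_i^-\}$, contradicting the assumption that $G$ has no spanning tree with at most two branch vertices.

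The main obstacle is the remaining case in which one added edge returns to the branch vertex $v_i^-$, i.e.\ $u_j=v_i^-$ or $u_\ell=v_i^-$ (possible only when $u_j$ or $u_\ell$ is $s$ or $t$); then the displayed swap no longer lowers $\deg v_i^-$, since the removed edge at $v_i^-$ is compensated by an added one. Here I would instead detach the two pendant paths hanging at $v_i^-$ and reconnect them through the chord $v_1v_2$ or $v_3v_4$ supplied by Claim \ref{claim1}, a three-edge exchange that brings $\deg v_i^-$ down to two without creating a new branch vertex, at least when the second chord leaves the pair of components meeting at $v_i^-$. The genuinely delicate configuration is when both $u_j$ and $u_\ell$ lie inside that pair (for instance $i=3$, $j=6$, $\ell=4$), so that every reconnecting edge stays within it and no branch vertex can be removed by reattachment alone; I expect this to be the crux, to be settled by analysing the neighbourhood of $x^-$ and invoking the claw-freeness of $G$.
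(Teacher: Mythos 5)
Your parts (a)--(d) are read off exactly as in the paper, and your ``principal case'' exchange for (e) is precisely the paper's move: your tree $T'=T+xu_j+x^{-}u_\ell-v_iv_i^{-}-x^{-}x$ is the paper's $T'=T+u_jx+u_kx^{-}-xx^{-}-wv_5$ when $i=5$ and $T'=T+u_jx+x^{-}u_k-xx^{-}-sv_i$ when $i\in\{1;2\}$, $j,k\neq 7$. Your disposal of the boundary case $x=v_i$ via independence of $I$ is correct (and stated more explicitly than in the paper), and your chord repair $v_1v_2$ (resp.\ $v_3v_4$) for the case where exactly one of $u_j,u_\ell$ equals $v_i^{-}$ coincides with the paper's exchanges $T+u_jx+x^{-}u_7+v_1v_2-xx^{-}-sv_1-sv_2$ and $T+sx+u_kx^{-}+v_1v_2-xx^{-}-sv_1-sv_2$. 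So up to that point you are on the paper's route.

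However, the configuration you leave open is a genuine case and the proof is incomplete without it: in the paper's notation it is $i\in\{1;2\}$, $j=7$ (so $xs\in E(G)$) and $u_\ell=u_h$ with $\{h\}=\{1;2\}\setminus\{i\}$ (your example $i=3$, $j=6$, $\ell=4$ is its mirror at $t$). Announcing that it ``should follow from claw-freeness'' is not an argument, and in fact claw-freeness alone does not suffice; the paper needs two further ingredients. Since $s$ is adjacent to $s^{-}$, $v_i$ and $x$, and $v_is^{-}\notin E(G)$ by Claim~\ref{claim1}, claw-freeness gives $xv_i\in E(G)$ or $xs^{-}\in E(G)$. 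If $xv_i\in E(G)$, then $T'=T+xv_i+u_hx^{-}-xx^{-}-sv_i$ is a spanning tree with exactly two branch vertices, a contradiction. But if $xs^{-}\in E(G)$, the natural exchange $T'=T+s^{-}x+u_hx^{-}-xx^{-}-sv_i$ creates a \emph{new} branch vertex at $s^{-}$ whenever $s^{-}\neq w$; the contradiction then comes not from counting branch vertices but from the extremal choice of $T$: the resulting tree is again of the Step~1 type with branch vertices $s^{-},w,t$ and $d_{T'}(s^{-},t)=d_T(s,t)-1<r_1$, violating (C1). This is the only point in the whole claim where the minimality condition (C1) and the non-adjacency from Claim~\ref{claim1} are indispensable, and your proposal invokes neither --- which is exactly why, as you yourself observed, ``reattachment alone'' cannot remove the branch vertex $v_i^{-}$ in this case.
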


\begin{proof}
	By Claim \ref{claim2} and Claim \ref{claim3} we prove (a), (b), (c) and (d).\\
Now, suppose that $u_kx^{-}\in E(G)$ with $k\not= j$.\\
If $i=5$ then the tree $T'=T+u_jx+u_kx^{-}-xx^{-}-wv_5$ is a spanning tree of $G$ with two branch vertices, a contradiction.\\
Otherwise, by the same role of $s$ and $t$ we may assume that $i\in \{1;2\}$.\\
Case 1. $j\not=7$. We consider the tree
	\begin{align*}
		T' = 
		\begin{cases}
			T+u_jx+x^{-}u_k-xx^{-}-sv_i & \text { if $k\not=7$},\\
			T+u_jx+x^{-}u_7+v_1v_2-xx^{-}-sv_1-sv_2 & \text { if $k=7$}.\\
		\end{cases}
	\end{align*}
	Then the resulting tree is a spanning tree of $G$ with two branch vertices, a contradiction.\\
Case 2. $j=7$. Set $h\in\{1;2\}-\{i\}$.\\
If $k\not= h$ then the tree $T'=T+sx+u_kx^{-}+v_1v_2-xx^{-}-sv_1-sv_2$ is a spanning tree of $G$ with two branch vertices, a contradiction.\\
If $k=h$ then , since $\{ss^{-}, sv_i, sx\}$ is not claw, we have either $xs^{-}\in E(G)$ or $xv_i\in E(G)$.\\
Subcase 2.1. If $xs^{-}\in E(G)$ then the tree $T'=T+s^{-}x+u_hx^{-}-xx^{-}-sv_i$ gives a contradiction with the condiction (C1) or $T'$ has two branch vertices, this implies a contradiction.\\
Subcase 2.2. If $xv_i\in E(G)$ then we consider the tree $T'=T+xv_i+u_hx^{-}-xx^{-}-sv_i.$ This implies a contradiction from the fact that $T'$ is a spanning tree of $G$ with two branch vertices.\\
Claim \ref{claim4} is proved.
\end{proof}
\begin{claim}\label{claim5}
	We have $N_2(I-\{u_i\})\cap B_i=\emptyset$ for all $1\leq i\leq 5$.
\end{claim}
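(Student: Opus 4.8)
The plan is to argue by contradiction, exploiting claw-freeness together with the predecessor restriction recorded in Claim \ref{claim4}(e). Suppose that for some $i$ there is a vertex $x\in B_i$ lying in $N_2(I-\{u_i\})$, and let $u_j,u_k$ (with $j\neq k$ and $j,k\neq i$) be its two neighbours in $I-\{u_i\}$. The whole argument will hinge on producing an induced claw centred at $x$, so the first task is to locate a third neighbour of $x$ that is non-adjacent to both $u_j$ and $u_k$; the natural candidate is the predecessor $x^{-}$ of $x$ on the path $B_i$.

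First I would check that $x^{-}$ is actually available inside $B_i$, i.e. that $x$ is an interior vertex of the path from $v_i$ to $u_i$. Part (a) of Claim \ref{claim4} already gives $x\neq u_i$. To rule out $x=v_i$ I would split according to the two neighbours: if at least one of $j,k$ lies in $\{1,\dots,5\}$, then part (b) forces $x\neq v_i$; the only remaining possibility is $\{j,k\}=\{6,7\}$, i.e. $x$ is adjacent to both $s$ and $t$, and then parts (c) and (d) together exclude $x=v_i$ for every $i$. Hence $x$ is interior, $x^{-}$ exists and lies strictly between $v_i$ and $u_i$, so in particular $x^{-}\notin I$.

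Next I would pin down the neighbourhood of $x^{-}$ in $I$. Applying Claim \ref{claim4}(e) to the pair $(x,u_j)$ gives $x^{-}\notin N(I-\{u_j\})$, and applying it to $(x,u_k)$ gives $x^{-}\notin N(I-\{u_k\})$. Since $j\neq k$, the two sets $I-\{u_j\}$ and $I-\{u_k\}$ cover all of $I$, so $x^{-}$ has no neighbour in $I$ at all; in particular $x^{-}u_j,\,x^{-}u_k\notin E(G)$.

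Finally I would assemble the contradiction. The three vertices $x^{-},u_j,u_k$ are distinct (as $x^{-}\notin I$), each is adjacent to $x$ ($x^{-}$ through the tree edge, $u_j,u_k$ by assumption), and they are pairwise non-adjacent: $u_ju_k\notin E(G)$ because $I$ is independent (Claim \ref{claim2}), while $x^{-}u_j,\,x^{-}u_k\notin E(G)$ by the previous step. Thus $\{x;x^{-},u_j,u_k\}$ induces a $K_{1,3}$, contradicting the claw-freeness of $G$, and the claim follows. I expect the only delicate point to be the bookkeeping that guarantees $x\neq v_i$ in every configuration of $\{j,k\}$ — in particular the case where both neighbours are the branch vertices $s$ and $t$, where one must invoke parts (c) and (d) of Claim \ref{claim4} simultaneously rather than part (b).
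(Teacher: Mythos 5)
Your proof is correct and takes essentially the same route as the paper's: apply Claim \ref{claim4}(e) to both neighbours $u_j,u_k$, conclude $x^{-}\notin N(I-\{u_j\})\cup N(I-\{u_k\})=N(I)$, and obtain the induced claw $\{x;x^{-},u_j,u_k\}$ contradicting claw-freeness. The only difference is that you explicitly verify $x\neq u_i,v_i$ via parts (a)--(d) so that $x^{-}$ is a genuine fourth vertex lying outside $I$; the paper's one-line argument leaves this degenerate case implicit, so your bookkeeping is a welcome (but not essentially different) refinement.
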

\begin{proof}
	Now, suppose that there exists $x\in N_2(I-\{u_i\})\cap B_i$. Then $xu_j, xu_k\in E(G)$ for some $j,k\not=i$. By Claim \ref{claim4} we get $x^{-}\notin N(I-\{u_j\}) \cup N(I-\{u_k\})=N(I)$. Hence, $\{xu_j, xu_k, xx^{-}\}$ is claw, which gives a contradiction. Therefore $N_2(I-\{u_i\})\cap B_i=\emptyset$.
\end{proof}
Since Claim \ref{claim4} and Claim \ref{claim5}, for $1\leq i \leq 5,$ $\{u_i\}, N(u_i)\cap B_i$ and $ (N(I-\{u_i\}))^{-}\cap B_i$ are pair-wise disjoint subsets
of $B_i$, where $(N(I-\{u_i\}))^{-}\cap B_i=\{x^{-}:\ x\in N(I-\{u_i\})\cap B_i\}$ and $ N_3(I)=(N_2(I)- N(u_i))\cap B_i=\emptyset.$ Thus, for each $i\in \{1;2;3;4\},$ we have
\begin{equation}\label{eq11}
\sum\limits_{j=1}^7 |N_G(u_j)\cap B_i|\leq |B_i|.
\end{equation}
Moreover, when $i=5$ we have 
\begin{equation}\label{eq12}
\sum\limits_{j=1}^7 |N_G(u_j)\cap B_5|\leq |B_5|-1.
\end{equation}
Now we consider the set $N(I) \cap V(P_T[t,s]-\{s,t\})$.
\begin{claim}\label{claim6}
	For all	$1\leq i\leq 4$, then $N(u_i)\cap P=\emptyset$.
\end{claim}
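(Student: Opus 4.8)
The plan is to argue by contradiction using only the minimality condition (C1); no further appeal to claw-freeness appears to be needed for this claim. Suppose that $u_i x \in E(G)$ for some $1 \le i \le 4$ and some $x \in P$. Because $B_1, B_2$ are attached to $s$ and $B_3, B_4$ to $t$ in completely interchangeable ways, and since $u_1, u_2$ (respectively $u_3, u_4$) play symmetric roles, I may assume $i = 1$; the case $i = 3$ will follow symmetrically, deleting $t t^{+}$ in place of $s s^{-}$. The single construction I would use throughout is
\[
T' = T + u_1 x - s s^{-}.
\]

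First I would check that $T'$ is again a spanning tree and record the degree changes. Since $u_1$ is a leaf of $T$ lying in $B_1$, the edge $u_1 x$ is not in $T$, and the fundamental cycle it creates is $P_T[u_1, x] + u_1 x$. Whether $x \in P_1$ or $x \in P_2$, the path $P_T[u_1, x]$ runs $u_1 \to \cdots \to v_1 \to s \to s^{-} \to \cdots \to x$, so the deleted edge $s s^{-}$ lies on this cycle and $T'$ is a tree on $V(G)$. The only degrees that change are: $u_1$ rises from $1$ to $2$, $s^{-}$ drops to $1$ and becomes a new leaf, $s$ drops from $3$ to $2$ and ceases to be a branch vertex, and $x$ rises from $2$ to $3$ and becomes a branch vertex. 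Hence the branch set of $T'$ is contained in $\{x, w, t\}$, and $|L(T')| = 5$ with leaf set $\{s^{-}, u_2, u_3, u_4, u_5\}$, so $T'$ again falls under Step 1.

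The core of the argument is then to compare $T'$ with $T$ via (C1). If $x \in P_1$ (strictly between $w$ and $s$), the branch vertices of $T'$ are $x, w, t$ with $w$ lying between $x$ and $t$, so the end-to-end distance is $r_1' = d_{T'}(x, t) = d_T(x, w) + d_T(w, t)$; since $x$ is interior to $P_T[w,s]$ we have $d_T(x, w) < r_2$, whence $r_1' < r_2 + (r_1 - r_2) = r_1$. If instead $x \in P_2$ (between $t$ and $w$), the branch vertices are $x, w, t$ with $x$ now in the middle, and $r_1' = d_{T'}(t, w) = r_1 - r_2 < r_1$ because $r_2 \ge 1$. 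In either case $T'$ is a spanning tree satisfying the hypotheses of Step 1 with $r_1' < r_1$, contradicting the minimality of $r_1$ in (C1).

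The point that needs the most care, and which I expect to be the only delicate part, is the degenerate position of $x$, where the surgery collapses a second branch vertex. If $x = s^{-}$, then deleting $s s^{-}$ and adding $u_1 s^{-}$ leaves both $s$ and $s^{-}$ of degree $2$, so $T'$ has only the two branch vertices $w, t$; similarly, if $P_1 = \emptyset$ (so that $s^{-} = w$) and $x \in P_2$, the deletion of $s w$ also drops $w$ to degree $2$, leaving only $x$ and $t$ as branch vertices. In both degenerate situations $T'$ has at most two branch vertices, contradicting the standing assumption outright, which is even stronger than violating (C1). Since every position of $x$ leads to a contradiction, this establishes $N(u_i) \cap P = \emptyset$ for all $1 \le i \le 4$.
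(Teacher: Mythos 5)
Your proof is correct and uses essentially the same idea as the paper: a single edge exchange followed by a contradiction with the minimality condition (C1) (or, in your degenerate positions, with the standing assumption that $G$ has no spanning tree with at most two branch vertices). The paper's only difference is that it deletes $v_iv_i^{-}=sv_i$ (detaching $B_i$ and re-hanging it from $x$) rather than $ss^{-}$, which keeps three branch vertices $x,w,t$ of degree $3$ in every case and thereby avoids your separate treatment of $x=s^{-}$ and $P_1=\emptyset$.
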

\begin{proof}
	For each $1\leq i\leq 4$, if there exists $y\in N(u_i)\cap P$ then we consider the tree
$$ T'= 	T+yu_i-v_iv_i^{-}.$$
This contradicts the condition (C1). Hence Claim \ref{claim6} holds.
\end{proof}
\begin{claim}\label{claim7}
	For all	$1\leq i\leq 4$, then $w\notin N(u_i)$.
\end{claim}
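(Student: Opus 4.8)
The plan is to argue by contradiction, exploiting the one feature that sets $w$ apart from the interior path vertices treated in Claim~\ref{claim6}: the vertex $w$ is \emph{already} a branch vertex of $T$, so raising its degree cannot create a new branch vertex. Suppose $wu_i\in E(G)$ for some $i\in\{1,2,3,4\}$. Because $s,w,t$ lie on $P_T[t,s]$ with $w$ in the middle, while $B_1,B_2$ are attached at $s$ and $B_3,B_4$ are attached at $t$, the vertices $s$ and $t$ play symmetric roles here, so I may assume without loss of generality that $i\in\{1,2\}$; the cases $i\in\{3,4\}$ then follow by interchanging the roles of $s$ and $t$.

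I would then perform the single edge exchange
$$T'=T+wu_i-v_iv_i^{-},$$
where $v_iv_i^{-}=sv_i$ is the unique edge joining the component $B_i$ to the remainder of $T$. Deleting $sv_i$ detaches precisely the path $B_i=v_i,\dots,u_i$, and the new edge $wu_i$ reattaches this path to $w$ through its leaf $u_i$; therefore $T'$ is again a spanning tree of $G$. The degenerate case $v_i=u_i$ (in which $B_i$ is a single vertex) is handled by the same exchange, with $u_i$ remaining a leaf of $T'$.

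Finally I would read off the degrees in $T'$. Only $s,w,u_i,v_i$ change: one has $\deg_{T'}(s)=\deg_T(s)-1=2$, $\deg_{T'}(w)=\deg_T(w)+1=4$, $\deg_{T'}(u_i)=2$, and $v_i$ becomes a leaf, whereas $t$ keeps degree $3$ and every other vertex keeps degree at most $2$. Hence $B(T')=\{w,t\}$, so $T'$ is a spanning tree of $G$ with exactly two branch vertices, contradicting the standing hypothesis and forcing $w\notin N(u_i)$. I do not expect a genuine obstacle here: the whole content is that, because $w$ was already branching, adjoining $wu_i$ pushes $\deg w$ up to $4$ while simultaneously destroying the branch vertex $s$, so the branch count drops from three to two outright. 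Consequently, unlike in the neighbouring claims, no appeal to the minimality condition (C1) is required, and the only points to verify are the routine degree bookkeeping (that no vertex other than $w$ reaches degree $\geq 3$) and the trivial case $v_i=u_i$.
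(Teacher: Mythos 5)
Your proposal is correct and is essentially the paper's own proof: the identical exchange $T'=T+wu_i-v_iv_i^{-}$ (deleting the edge $sv_i$ or $tv_i$ that attaches $B_i$), yielding a spanning tree with exactly two branch vertices and hence a contradiction. Your added symmetry reduction to $i\in\{1,2\}$ and the explicit degree bookkeeping are harmless elaborations of the same one-line argument, and you correctly observe that, unlike Claim~\ref{claim6}, no appeal to condition (C1) is needed.
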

\begin{proof}
	For each $1\leq i\leq 4$, if there exists $wu_i\in E(G)$ the we set the tree
	$$ T'=	T+wu_i-v_iv_i^{-} .$$
Then $T'$ is a spanning tree of $G$ with two branch vertices. This gives a contradiction.
\end{proof}
\begin{claim}\label{claim8}
	$v_5w^{-}\in E(G), wu_5 \notin E(G), wu_6\notin E(G)$.
\end{claim}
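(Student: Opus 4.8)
The plan is to establish all three assertions by the same rerouting technique used throughout Claims~\ref{claim1}--\ref{claim7}: assuming the unwanted edge is present, I build a spanning tree that either has at most two branch vertices (contradicting the standing hypothesis that $G$ has no such tree) or is again of the Step~1 form but with $(r_1;r_2)$ strictly smaller in lexicographic order (contradicting (C1)). Here $w^{-}$ denotes the predecessor of $w$ and $w^{+}$ its successor on $P_T[t,s]$.

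For $v_5w^{-}\in E(G)$ I would argue from the claw $\{ww^{-},ww^{+},wv_5\}$ centred at $w$. If $w^{-}w^{+}\in E(G)$, then the tree $T+w^{-}w^{+}-ww^{+}$ makes $w$ a pendant vertex hanging off the $s$--$t$ path and shortens $d_T(s,t)$, producing a Step~1 tree that violates (C1) (or a tree with only two branch vertices in the degenerate cases). If $w^{+}v_5\in E(G)$, then $T+w^{+}v_5-wv_5$ moves the middle branch vertex from $w$ to $w^{+}$, so $r_2$ drops while $r_1$ is unchanged, again contradicting (C1); note $w^{+}\neq s$ since $sv_5=u_7v_5\notin E(G)$ by Claim~\ref{claim3}. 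As $G$ is claw-free one of the three edges must occur, so $v_5w^{-}\in E(G)$; moreover $w^{-}\neq t$, because $tv_5=u_6v_5\notin E(G)$ by Claim~\ref{claim3}, whence $P_2\neq\emptyset$.

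For $wu_5\notin E(G)$, suppose $wu_5\in E(G)$ and exploit the edge $v_5w^{-}$ just obtained to form
\[
T'=T+wu_5+v_5w^{-}-wv_5-ww^{-}.
\]
Cutting $wv_5$ and $ww^{-}$ splits $T$ into the part containing $w$ and $s$, the branch $B_5$, and the part containing $w^{-}$ and $t$; the chord $wu_5$ reattaches $B_5$ at $w$ while $v_5w^{-}$ reattaches it to the $t$-side, so $T'$ is a spanning tree. In $T'$ the vertex $w$ has degree two and no vertex has its degree raised to three, so $s$ and $t$ are the only branch vertices, contradicting the standing hypothesis.

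For $wu_6=wt\notin E(G)$, suppose $wt\in E(G)$; this is the step I expect to be the main obstacle. The natural move $T+wt-tt^{+}$ creates the short path $s\cdots w\,t$ of length $r_2+1<r_1$, which would beat (C1). The difficulty is the interior path $P_2$: inserting the chord $wt$ pushes the degree of $t$ (or of $w$) up to four, and detaching $P_2$ from $t$ turns its far endpoint into a new leaf. When $P_2$ is a single vertex this is harmless, and the swap $T+wt+v_5w^{-}-tt^{+}-wv_5$ yields a Step~1 tree with $d_T(s,t)=r_2+1$, contradicting (C1). For a longer $P_2$, however, every reconnecting chord available to me---$v_3v_4$ from Claim~\ref{claim1} and $v_5w^{-}$ from the first part---joins two degree-two vertices and hence manufactures a fresh branch vertex, so the crude swap fails. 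To finish I would apply claw-freeness at $t$ to a triple such as $\{w,t^{+},v_3\}$ (using $v_3v_4\in E(G)$) to extract an additional chord, and then slide the attachment of $P_2$ inward along it, iterating toward $t$ until the tree either drops to two branch vertices or becomes a strictly shorter Step~1 tree. Making this reduction terminate cleanly is the crux of the claim.
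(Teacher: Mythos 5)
Your first two assertions are correct and match the paper's proof exactly: the claw $\{ww^{+},ww^{-},wv_5\}$ at $w$, with $w^{-}w^{+}$ and $w^{+}v_5$ excluded by (C1)-type exchanges, forces $v_5w^{-}\in E(G)$; and the exchange $T'=T+wu_5+v_5w^{-}-wv_5-ww^{-}$ disposes of $wu_5$. The genuine gap is in the third assertion, $wu_6=wt\notin E(G)$, which you declared the ``crux'' and left unfinished for $|P_2|\geq 2$. The problem is that you switched to a different exchange there: you removed $tt^{+}$ instead of $ww^{-}$, and indeed that move creates a new branch vertex at $w^{-}$ whenever $|P_2|\geq 2$, which is why you were driven to an uncontrolled iterative ``sliding'' argument with no termination proof.

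The missing observation is that the very same exchange you used for $u_5$ works verbatim for $u_6=t$: take
\[
T'=T+tw+v_5w^{-}-wv_5-ww^{-}.
\]
Deleting $wv_5$ and $ww^{-}$ splits $T$ into the $s$-side component containing $w$, the branch $B_5$, and the $t$-side component containing $w^{-}$; the chord $tw$ rejoins the $s$-side to the $t$-side and $v_5w^{-}$ rejoins $B_5$, so $T'$ is a spanning tree. In $T'$ the vertex $w$ has degree $2$, while $v_5$ and $w^{-}$ keep degree $2$; the only degree that rises is that of $t$, which becomes $4$. You apparently discarded this move because it pushes $\deg(t)$ to four, but that is harmless: the contradiction in Step 1 only requires a spanning tree with \emph{at most two} branch vertices, with no constraint on their degrees, and $T'$ has exactly two, namely $s$ (degree $3$) and $t$ (degree $4$). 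So no case split on $|P_2|$ and no iteration is needed; your insistence on landing back in a Step 1 configuration (all branch vertices of degree $3$) is what manufactured the obstacle.
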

\begin{proof}
	If $w^{+}v_5\in E(G)$ then consider the tree $T'=T+w^{+}v_5-wv_5$ has two branch vertices or it contradicts the condition $(C1).$ If $w^{+}w^{-}\in E(G)$ then the tree $T'=T+w^{+}w^{-}-ww^{-}$ has two branch vertices or it gives a contradiction with the condition (C1). Then, since $\{ww^{+}, ww^{-}, wv_5\}$ is not claw we obtain $v_5w^{-}\in E(G)$.\\
	For $i \in \{5; 6\},$ if $u_iw\in E(G)$ then the tree
	 $T'=T+u_iw+v_5w^{-}-v_5w-ww^{-}$ is a spanning tree of $G$ with two branch vertices, a contradiction. Hence $u_iw\notin E(G)$ for all $5\leq i \leq 6.$	
\end{proof}
By Claims \ref{claim7} and \ref{claim8} we conclude that $wu_i\notin E(G)$ for all $1\leq i\leq 6.$ Then, we get
\begin{equation}\label{eq13}
\sum\limits_{j=1}^7 |N_G(u_j)\cap \{w\}|=|N(s)\cap \{w\}|\leq 1.
\end{equation}
\begin{claim}\label{claim9} We have
\begin{equation}\label{eq14}
\sum\limits_{j=1}^7 |N_G(u_j)\cap P_1|\leq |P_1|.
\end{equation}
\end{claim}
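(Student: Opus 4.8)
The plan is to bound the total number of edges from $I$ to the internal path $P_1$ by charging each such edge to a distinct vertex of $P_1$. First I would cut the sum down. By Claim~\ref{claim6} we have $N(u_i)\cap P=\emptyset$ for $1\le i\le 4$, and $P_1\subseteq P$, so $u_1,u_2,u_3,u_4$ contribute nothing; writing $c(x)=|N(x)\cap I|$ for $x\in P_1$, only $u_5$, $u_6=t$ and $u_7=s$ can meet $P_1$, hence
\[
\sum_{j=1}^7|N_G(u_j)\cap P_1|=\sum_{x\in P_1}c(x),\qquad N(x)\cap I\subseteq\{u_5,t,s\}.
\]
Since $I$ is independent (Claim~\ref{claim2}) and $G$ is claw-free we already know $N_3(I)=\emptyset$, so $c(x)\le 2$ for every $x\in P_1$. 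Thus it suffices to inject the incidences $\{(x,u_j):x\in P_1,\ u_jx\in E(G)\}$ into $P_1$, equivalently to show $|S|+|N_2(\{u_5,t,s\})\cap P_1|\le|P_1|$, where $S=N(\{u_5,t,s\})\cap P_1$.

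The natural charging, following the orientation of $P_T[t,s]$ from $t$ to $s$, sends the first incidence at $x$ to $x$ itself and, when $c(x)=2$, the second incidence to the predecessor $x^{-}$. For injectivity into $P_1$ I must verify, for each $x\in P_1$ with $c(x)=2$ (say $x\sim u_a,\ x\sim u_b$ with $a\ne b$), that $x^{-}\in P_1$ (i.e. $x\ne w^{+}$) and that $x^{-}\notin S$. The second requirement is exactly where claw-freeness works against me: applying it to $\{x;u_a,u_b,x^{-}\}$ and using $u_au_b\notin E(G)$ from Claim~\ref{claim2} forces $x^{-}$ to be adjacent to $u_a$ or $u_b$, so $x^{-}\in S$ — precisely the collision I wanted to avoid. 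Hence the naive predecessor charge cannot work on its own, and the real content of the lemma is to rule out, by tree-exchange arguments, the dense adjacency patterns (runs of vertices with $c=2$, or a $c=2$ vertex whose neighbour on $P_1$ also meets $I$) that claw-freeness would otherwise allow; the minimality condition (C1) is the lever that forbids them.

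The main obstacle is therefore this exchange analysis for the doubly-covered vertices, and it is genuinely unavoidable: a computation of the type in Claim~\ref{claim8} shows that an isolated edge $tp_k$ is harmless when $P_2\ne\emptyset$ (it merely creates a fourth branch vertex), so one cannot delete such edges one at a time. Concretely, for $x=p_k\in N_2\cap P_1$ I would build a spanning tree $T'$ that reroutes the branch at $s$ (respectively at $t$, or the branch at $w$ carrying $B_5$) through the edges $xu_a,\ xu_b$ while deleting one tree edge, so that $s$ and $t$ retain degree three; the resulting $T'$ then either has at most two branch vertices or strictly decreases $(r_1,r_2)$, contradicting (C1). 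The delicate points are that $s$ and $t$ are themselves degree-three branch vertices (so any exchange touching them must simultaneously delete one of their incident tree edges to avoid producing a degree-four vertex), that $B_5$ is attached at $w$ (so edges to $u_5$ must be routed using Claims~\ref{claim7} and~\ref{claim8}), and the two boundary vertices $w^{+}=p_1$ and $s^{-}=p_{r_2-1}$, the latter being a forced tree-neighbour of $s$. Once these cases are dispatched, the remaining incidences inject into $P_1$ and the inequality $\sum_{x\in P_1}c(x)\le|P_1|$ follows.
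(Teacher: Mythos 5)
Your reduction is sound and matches the paper's first move: by Claim~\ref{claim6} only $u_5$, $t$, $s$ can meet $P_1$, and since $N_3(I)=\emptyset$ it suffices to show that no vertex of $P_1$ is adjacent to two of $\{u_5,t,s\}$ (indeed, the paper proves the slightly stronger pair of facts $N(u_5)\cap P_1=\emptyset$ and $N(s)\cap N(t)\cap P_1=\emptyset$, which give $\sum_{x\in P_1}c(x)\le |P_1|$ immediately, with no charging needed). But your proposal stops exactly where the proof begins. You write that you ``would build a spanning tree $T'$ that reroutes the branch at $s$ (respectively at $t$, or the branch at $w$ carrying $B_5$)\dots'' without ever exhibiting a single such $T'$; this exchange analysis is the entire content of the claim, and it is not routine bookkeeping. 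Concretely, what is missing is: (i) for $x\in N(u_5)\cap P_1$, the tree $T'=T+xu_5-wv_5$ is again a Step-1 tree with the same $r_1$ but with middle branch vertex $x$ strictly closer to $s$, so $(r_1;r_2)$ drops lexicographically, contradicting (C1) --- this kills $u_5$ outright, a case your symmetric treatment of $\{u_5,t,s\}$ never isolates; (ii) for $x\in N(s)\cap N(t)\cap P_1$, claw-freeness at $x$ applied to $\{t,s,x^-\}$ together with $st\notin E(G)$ (Claim~\ref{claim2}) forces $tx^-\in E(G)$ or $sx^-\in E(G)$, and then $T+xt+tx^{-}-xx^{-}-ww^{-}$, respectively $T+xt+sx^{-}-xx^{-}-ww^{-}$, is a spanning tree of $G$ with exactly \emph{two} branch vertices, an immediate contradiction.

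Beyond incompleteness, your guiding heuristic points in the wrong direction. You insist that the exchange be done ``so that $s$ and $t$ retain degree three,'' i.e.\ you aim to stay inside the Step-1 configuration and contradict (C1). The winning move in case (ii) does the opposite: it deletes $ww^{-}$ so that $w$ ceases to be a branch vertex and deliberately lets $t$ (and $s$) absorb extra degree; the resulting tree has only the two branch vertices $s,t$ and contradicts the standing hypothesis that $G$ has no spanning tree with at most two branch vertices. Constraining the exchange to preserve three degree-$3$ branch vertices would make this case needlessly hard, and nothing in your sketch indicates how it would close. So the proposal correctly identifies the target ($c(x)\le 1$ on $P_1$) and the tools (claw-freeness plus (C1)), but the proof itself --- the two explicit exchanges above --- is absent, and the one structural hint given about how to find them is misleading.
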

\begin{proof}
By Claim \ref{claim6} we have
\begin{equation*}
\sum\limits_{j=1}^7 |N_G(u_j)\cap P_1|=|N(u_5)\cap P_1|+|N(s)\cap P_1|+|N(t)\cap P_1|.
\end{equation*}
 Assume that there exists $x\in N(u_5)\cap P_1$. Then we obtain a contradiction with the condiction (C1) by considering the tree $T'=T+xu_5-v_5w.$ So $N(u_5)\cap P_1=\emptyset$.\\
Now we will prove that $N(s)\cap N(t)\cap P_1=\emptyset.$ Indeed, if there exists $x\in N(s)\cap N(t)\cap P_1$. If $x^{-}t\in E(G)$ then we consider the tree $T'=T+xt+xt^{-}-xx^{-}-ww^{-}.$ Hence $T'$ is a spanning tree of $G$ with two branch vertices, a contradiction. This implies that $xt^{-}\notin E(G)$. Combining with the fact that $G$ is claw-free and considering four vertices $\{x,t,s,x^{-}\}$ we get $sx^{-}\in E(G)$. Now we consider the tree $T'=T+xt+sx^{-}-xx^{-}-ww^{-}$. Hence $T'$ is a spanning tree of $G$ with two branch vertices, which is a contradiction. Therefore, $N(s)\cap N(t)\cap P_1=\emptyset.$ This completes (\ref{eq14}).
\end{proof}
\begin{claim}\label{claim10}
	If $z\in N(u_5)\cap P_2$ then $z\not= w^{-}$ and $w^{-}\notin N(I)$.
\end{claim}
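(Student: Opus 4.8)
The plan is to suppose the statement fails and, in each case, perform one or two edge–swaps on $T$ producing either a spanning tree with at most two branch vertices (contradicting the standing assumption) or another tree of the Step~1 type whose pair $(r_1;r_2)$ is strictly smaller in lexicographic order, contradicting (C1). The two tools I would keep at hand are $v_5w^-\in E(G)$ from Claim~\ref{claim8} and the fixed vertex $z\in N(u_5)\cap P_2$; note also $wt\notin E(G)$ from Claim~\ref{claim8}.

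To get $z\neq w^-$, I would argue by contradiction: if $u_5w^-\in E(G)$, form $T'=T+u_5w^--ww^-$. Adding $u_5w^-$ creates a cycle through $w$ and $w^-$ (via $u_5\cdots v_5$–$w$–$w^-$), and deleting the cycle edge $ww^-$ restores a spanning tree. In $T'$ the vertices $w$, $w^-$, $u_5$ all have degree two (here $w^-\in P_2$ guarantees $w^-\neq t$, so $w^-$ keeps its predecessor), leaving $s$ and $t$ as the only branch vertices. This contradicts the hypothesis that $G$ has no spanning tree with at most two branch vertices, so $z\neq w^-$.

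For $w^-\notin N(I)$, I would assume $w^-u_k\in E(G)$ and split on $k$ (the value $k=5$ being already excluded). If $k\in\{1,2\}$, the swap $T+u_kw^--v_ks$ reroutes the component hanging at $s$ onto $w^-$: now $\deg(s)=2$ and $w^-$ becomes a branch vertex, so the branch vertices are $t,w^-,w$ with $w^-$ internal, and the two extreme branch vertices $t,w$ satisfy $d_T(t,w)<r_1$, contradicting (C1). The cases $k\in\{3,4\}$ are symmetric via $T+u_kw^--v_kt$, giving extreme branch vertices $s,w^-$ at distance $<r_1$. If $k=7$, that is $sw^-\in E(G)$, then $ww^-$ does lie on the cycle created by adding $sw^-$, so $T+sw^--ww^-$ lowers $\deg(w)$ to two and raises $\deg(s)$ to four, again leaving only $s,t$ as branch vertices.

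The remaining case $k=6$, i.e.\ $w^-t\in E(G)$, is the crux and is genuinely harder, because $w$ does \emph{not} lie between $t$ and $w^-$ on $P_T[t,s]$; hence $ww^-$ is \emph{not} on the cycle produced by the chord $tw^-$, and the clean single swap used for $k=7$ is unavailable. Here I would bring in $z$: first reroute $B_5$ off $w$ by the swap $+\,u_5z-wv_5$, freeing $w$ to degree two, and then use the chord $tw^-$ to fold the sub-path of $P_T[t,s]$ strictly between $t$ and $w^-$ into a single pendant, the delicate point being to reattach its loose end through $z$ and the path $z\cdots u_5\cdots v_5$ so that no extra leaf or extra branch vertex appears. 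To govern the interior vertices of that sub-path I would invoke claw–freeness at $w^-$: since $w^-$ is adjacent to $w$, to its predecessor, and (by assumption) to $t$, while $wt\notin E(G)$ by Claim~\ref{claim8}, these three cannot be pairwise non-adjacent, which yields an extra chord that makes the fold carryable. The outcome is a Step~1 tree with strictly smaller $(r_1;r_2)$, or a tree with only two branch vertices, in either case a contradiction. I expect this folding step — arranging the reattachment to leave \emph{exactly} five leaves and \emph{exactly} three degree-three branch vertices for every position of $z$ along the sub-path — to be the main technical obstacle.
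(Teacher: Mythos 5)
Your treatment of $z\neq w^-$, of $w^-u_k$ for $k\in\{1,2,3,4\}$, and of $sw^-\in E(G)$ is correct (for $1\leq k\leq 4$ the swaps are not even needed: $w^-\in P_2\subseteq P$, so Claim~\ref{claim6} already gives $u_kw^-\notin E(G)$; the paper just cites it). The genuine gap is exactly the case you flag as the crux, $tw^-\in E(G)$, and your sketch does not close it. If one carries out your plan --- first $T_1=T+u_5z-wv_5$, then add the chord $tw^-$ and delete $zz^-$ to ``fold'' the segment between $t$ and $w^-$ --- the resulting tree is $T+u_5z+tw^--wv_5-zz^-$, in which $w^-$ now has degree $3$ (the edge $ww^-$ was kept), $t$ has degree $4$ whenever $z^-\neq t$, and there are six leaves ($v_5$ and $z^-$ become new leaves). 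That is a Step~2 configuration, not a tree with at most two branch vertices and not a Step~1 tree beating (C1); and inside Step~1 a Step~2 tree is not a contradiction, because the paper rules out Step~2 only \emph{after} Step~1 and by appeal to it. The claw-freeness observation at $w^-$ (using $wt\notin E(G)$) does not repair this, and your proposal itself concedes the reattachment is an unresolved ``technical obstacle.''

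The paper closes this case with a single explicit double swap that you were circling around but did not find: delete $ww^-$, not $wv_5$, and set $T'=T+u_5z+tw^--ww^--zz^-$. Deleting $ww^-$ and $zz^-$ splits $T$ into three components: one containing $s,w,P_1,B_1,B_2,B_5$; one containing $t,B_3,B_4$ and the segment $t\cdots z^-$; and the segment of $P_2$ from $z$ to $w^-$. The added edges $u_5z$ and $tw^-$ reconnect these into a spanning tree in which $w$, $w^-$, $z$ and $u_5$ all have degree $2$, while $z^-$ becomes a leaf and $t$ rises to degree $4$ (if $z^-=t$, then $t$ simply trades the neighbor $z$ for $w^-$ and keeps degree $3$). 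Either way the only branch vertices of $T'$ are $s$ and $t$, contradicting the standing assumption that $G$ has no spanning tree with at most two branch vertices. Note that this is precisely the point where the hypothesis $z\in N(u_5)\cap P_2$ is used, so the case you left open is the one the claim is really about.
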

\begin{proof}
	If $z=w^-$ then the tree $T'=T+u_5w^{-}-ww^{-}$ is a spanning tree of $G$ with two branch vertices, which is a contradiction. Hence $z\not= w^{-}$.\\	
	If $w^{-}t\in E(G)$ then the tree $T'=T+u_5z+tw^{-}-ww^{-}-zz^{-}$  is a spanning tree of $G$ with two branch vertices, which gives a contradiction. Hence $w^{-}t\notin E(G)$.\\
By Claim \ref{claim6} we have $u_iw^{-}\notin E(G)$ for all $1\leq i\leq 4$.\\
If  $w^{-}s\in E(G)$ then the tree $T'=T+sw^{-}-ww^{-}$ is a spanning tree of $G$ with 2 branch vertices, this implies a contradiction. Hence $w^{-}s\notin E(G)$. Therefore $w^{-}\notin N(I)$.
\end{proof}
\begin{claim}\label{claim11}
	$|N(u_5)\cap N(t)\cap P_2|\leq 1$.
\end{claim}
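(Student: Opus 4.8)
The plan is to argue by contradiction. Suppose $N(u_5)\cap N(t)\cap P_2$ contains two distinct vertices $z_1,z_2$. Since both lie on the oriented path $P_T[t,w]$, I would order them along this path; relabelling if necessary, assume $z_1$ precedes $z_2$, so that $z_2$ is strictly closer to $w$ than $z_1$, and in particular $z_2\neq t^{+}$. Writing $t=a_0,a_1,\dots,a_m=w$ for the vertex sequence of $P_T[t,w]$, put $z_1=a_p$ and $z_2=a_q$ with $1\le p<q$, so $q\ge 2$.

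The guiding idea is that the chord $tz_2$ is a shortcut: in any spanning tree in which $t$ reaches $s$ through $z_2$ rather than through $t^{+}$, the distance $d(s,t)$ drops from $r_1$ to $r_1-q+1$, which is strictly smaller than $r_1$ since $q\ge 2$. So I would try to rebuild $T$ into a tree $T'$ that is again of the Step~1 type (five leaves and three degree-three branch vertices, the middle one lying on the path between the outer two) but with $(r_1,r_2)$ strictly smaller in lexicographic order, contradicting (C1). Concretely, I would delete the tree edge $tt^{+}=ta_1$ and insert the chord $ta_q$, so that the block $\{t\}\cup B_3\cup B_4$ is reattached to the $w$--$s$ part of the tree at $z_2$; deleting $ta_1$ is the only deletion at $t$ compatible with keeping $B_3$ and $B_4$ pendant at $t$, and it is \emph{forced}, because merely inserting $tz_2$ would raise $\deg_T(t)$ to four. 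The severed internal segment $a_1,\dots,a_{q-1}$ together with the subtree $B_5$ then has to be reattached as a single pendant path, and here I would use the chord $u_5z_1$ together with the edge $v_5w^{-}$ supplied by Claim~\ref{claim8}, having first removed $wv_5$ to free $B_5$ and to make $w$ a degree-two vertex.

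The delicate point, and the step I expect to be the main obstacle, is to carry out this reattachment so that the resulting $T'$ has \emph{exactly} three degree-three branch vertices in the Step~1 configuration, with no vertex of degree four and no fourth branch vertex. The naive swaps all fail for the same structural reason: reconnecting $B_5$ to an interior vertex of $P_2$ turns that vertex into a branch vertex unless it simultaneously loses a path edge, and breaking the path to compensate severs yet another segment. Controlling this cascade is where the argument is genuinely constrained, and I expect to need claw-freeness at $t$ (applied to the triple of neighbours $\{t^{+},z_1,z_2\}$, or to $tz_i$ together with the tree-neighbours of $t$) and at $u_5$ to force an auxiliary adjacency, such as $z_1z_2\in E(G)$ or an edge from some $z_i$ into $B_3\cup B_4$, that makes one of the reattachments automatic. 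Claim~\ref{claim10} should be invoked throughout to exclude the degenerate position $z_i=w^{-}$ and to guarantee $w^{-}\notin N(I)$, ruling out interference from the $w$-end of the path.

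Finally, I would treat the boundary cases separately: when $z_1=t^{+}$ (so that $ta_1$ is itself one of the two chords and the severed segment is empty), when $z_1$ and $z_2$ are consecutive on $P_T[t,w]$ (so $a_{q-1}=z_1$ and $B_5$ can be hung directly at $z_1$ without creating a branch vertex), and when one of $v_3,v_4,v_5$ coincides with the corresponding leaf. In each of these the same edge set should produce either a Step~1 tree of strictly smaller $(r_1,r_2)$ or, more directly, a spanning tree with at most two branch vertices, yielding the contradiction and hence $|N(u_5)\cap N(t)\cap P_2|\le 1$.
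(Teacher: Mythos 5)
Your overall strategy --- contradict (C1) by shortening $r_1$ and reattaching $B_5$ together with a bypassed segment of $P_2$ as one pendant path --- is the right instinct, but the construction you propose does not close, and the step you yourself flag as ``the main obstacle'' is exactly the step a proof needs and you leave unresolved. Concretely: your exchange removes two tree edges ($tt^{+}$ and $wv_5$) and inserts three chords ($tz_2$, $u_5z_1$, $v_5w^{-}$), so the resulting subgraph has $n$ edges and contains a cycle; it is not a tree. After deleting one more edge to repair this, the resulting tree still has four branch vertices whenever $z_1$ is interior to the severed segment $a_1,\dots,a_{q-1}$ (the generic case), and one of them ($z_1$) hangs off a branch rather than lying on $P_{T'}[t,s]$. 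Such a tree is not of the Step 1 type, so it contradicts neither (C1) nor the assumption that $G$ has no spanning tree with at most two branch vertices. The root cause is that your shortcut $tz_2$ is ``too long'': the endpoint $t^{+}$ of the segment it bypasses is not, in general, the vertex $z_1$ where the chord $u_5z_1$ lands, and the auxiliary adjacency you hope claw-freeness will provide ($z_1z_2\in E(G)$, or an edge into $B_3\cup B_4$) is not the one that makes the reattachment work.

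The paper calibrates the shortcut so that this alignment is automatic. Writing $y=z_1$ and $x=z_2$, it derives the chord $x^{-}y^{-}$ between the two \emph{predecessors} and takes $T'=T+x^{-}y^{-}+u_5y-yy^{-}-wv_5$: the bypassed segment then starts exactly at $y\in N(u_5)$, so that segment together with $B_5$ forms a single pendant path at $x^{-}$, and $T'$ is a genuine Step 1 tree with $r_1$ strictly decreased, contradicting (C1). Producing $x^{-}y^{-}\in E(G)$ is the real work, done by a chain of claw-freeness applications that your sketch lacks: first $u_5x^{-},u_5y^{-}\notin E(G)$, proved by exhibiting trees such as $T+xt+u_5x^{-}-xx^{-}-wv_5$ in which $t$ rises to degree four while $w$ drops to degree two, leaving exactly two branch vertices and contradicting the standing assumption (this ``let $t$ have degree four'' move is absent from your plan); then the claw at $x$ on $\{u_5,t,x^{-}\}$ gives $tx^{-}\in E(G)$; then $N(v_3)\cap P_2=\emptyset$ (forced by (C1)) and the claw at $t$ on $\{v_3,y,x^{-}\}$ give $yx^{-}\in E(G)$; finally the claw at $y$ on $\{u_5,y^{-},x^{-}\}$ gives $x^{-}y^{-}\in E(G)$. (Claim \ref{claim10} plays only the minor role of keeping $w^{-}$ out of the picture; it cannot substitute for these derivations.) Without this chain and the recalibrated swap, your proposal is a plan with its central step missing rather than a proof.
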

\begin{proof}
Suppose that there exist $x,y\in N(u_5)\cap N(t)\cap P_2$, $x\not =y$. Without loss of generality, we may assume $y\in P_T[t,x]$.\\
Suppose that $u_5x^{-} \in E(G)$ then by $xt\in E(G)$ we consider the tree $T'=T+xt+u_5x^{-} -xx^{-}-wv_5.$ The resulting tree has two branch vertices, which is a contradiction. Hence we get $u_5x^{-}\notin E(G)$. Then $x^{-}\not= y$. Similarly, we also obtain $u_5y^{-}\notin E(G)$. Combining with $\{xu_5, xt, xx^{-}\}$ is not claw we obtain $tx^{-}\in E(G)$. By the condition (C1) it is easy to give $N(v_3)\cap P_2=\emptyset$. Then since $\{tv_3, ty, tx^{-}\}$ is not claw we obtain $yx^{-}\in E(G)$. Therefore we may conclude $x^{-}y^{-}\in E(G)$ by $\{yu_5,  yy^{-}, yx^{-}\}$ is not claw. Hence we consider the tree $T'=T+x^{-}y^{-}+u_5y-yy^{-}-wv_5$ to contradict the condition (C1).
\end{proof}
 \begin{claim}\label{claim12}
$N(s)\cap N(t)\cap P_2=\emptyset$.
\end{claim}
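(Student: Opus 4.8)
The plan is to argue by contradiction: suppose there is a vertex $x\in N(s)\cap N(t)\cap P_2$, and produce either a spanning tree of $G$ with at most two branch vertices or a Step~1 tree that violates the minimality condition (C1). Write the oriented spine as $t=p_0,p_1,\dots ,p_a=w,\dots ,p_{a+b}=s$, so that $P_2=\{p_1,\dots ,p_{a-1}\}$, $w^-=p_{a-1}$, and $x=p_m$ with $x^-=p_{m-1}$, $x^+=p_{m+1}$.

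First I would record two preliminary facts. Since $s=u_7\in I$, Claim~\ref{claim10} gives $w^-\notin N(I)$, hence $w^-\notin N(s)$, and therefore $x\neq w^-$; in particular $x^+=p_{m+1}$ is still an interior vertex of the $t$--$w$ segment (so $m\le a-2$). Next, $st\notin E(G)$: otherwise the tree $T+st-ww^-$, obtained by deleting the spine edge $p_{a-1}p_a$ from the cycle created by $st$, is connected and has $n-1$ edges, and a direct degree count shows that $w$ drops to degree $2$ while $s$ and $t$ become the only branch vertices (each of degree $4$) with no new leaves, i.e.\ a spanning tree with two branch vertices, contradicting the standing assumption.

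Next I would feed the edges $xs,xt\in E(G)$ into the claw-freeness of $G$. Considering $x$ together with its tree-neighbours, none of the triples $\{xx^-,xs,xt\}$, $\{xx^+,xs,xt\}$ can be a claw; since $st\notin E(G)$, this forces (whenever the relevant neighbour differs from $s$ and $t$) an edge from $x^-$, respectively from $x^+$, to $s$ or to $t$. This yields a short list of cases according to which of $x^-s,\,x^-t,\,x^+s,\,x^+t$ occurs, exactly paralleling the case splits used in Claims~\ref{claim9}, \ref{claim10} and \ref{claim11}. In each case I would perform the corresponding tree surgery, following the principle already used for $N(s)\cap N(t)\cap P_1$ in Claim~\ref{claim9}: the edge $xs$ (or $xt$) shortens the $s$--$t$ spine past $w$, the edge $v_5w^-\in E(G)$ from Claim~\ref{claim8} re-anchors the branch $B_5$ so that $w$ is eliminated as a branch vertex, and the claw-forced edge at $x^-$ or $x^+$ re-attaches the spine vertex left dangling by the cut. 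Balancing these additions and deletions so that $x$ does not itself become a new branch vertex and no interior spine vertex becomes a new leaf produces, in every case, either a spanning tree with at most two branch vertices or a Step~1 tree strictly smaller in the lexicographic order of (C1); by the symmetric roles of $s$ and $t$ this also settles the cases in which the forced edge lands on the $t$-side.

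I expect the bookkeeping in these surgeries to be the main obstacle. When the shortcut $xs$ is inserted, $x$ tends to rise to degree $3$, and cutting an edge at $w$ tends to strand $w^-$ (or $w^+$) as a degree-one vertex, so each case must delete and add edges in a precisely matched way — typically a pair of additions $\{xs,\,v_5w^-\}$ against a pair of deletions $\{ww^-,\,wv_5\}$, together with an adjustment at $x^{-}$ or $x^{+}$ — in order to keep each of $s,t,x,w,w^-,v_5$ at its intended degree. The exclusion $x\neq w^-$ coming from Claim~\ref{claim10} and the edge $v_5w^-$ coming from Claim~\ref{claim8} are precisely what make these matched moves available, so these two claims carry the weight of the argument.
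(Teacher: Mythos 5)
Your core mechanism is the right one, and it is essentially the paper's: since $st\notin E(G)$ (this is already part of Claim~\ref{claim2}, because $t=u_6$ and $s=u_7$ both lie in $I$, so your separate $T+st-ww^{-}$ argument is redundant), claw-freeness at $x$ forces an edge from a tree-neighbour of $x$ to $s$ or to $t$, and a matched add/delete surgery then yields a spanning tree with two branch vertices. But two steps of your plan fail as written. First, your exclusion $x\neq w^{-}$ rests on a misreading of Claim~\ref{claim10}: that claim is conditional, i.e.\ its conclusions $z\neq w^{-}$ and $w^{-}\notin N(I)$ are asserted only under the hypothesis that some $z\in N(u_5)\cap P_2$ exists, which is not known in the present situation. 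The fact you need ($sw^{-}\notin E(G)$) is true and is proved unconditionally inside the proof of Claim~\ref{claim10} (if $sw^{-}\in E(G)$ then $T+sw^{-}-ww^{-}$ has two branch vertices), so this is repairable, but as a citation the step is invalid.

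Second, your four-way case split over $x^{-}s$, $x^{-}t$, $x^{+}s$, $x^{+}t$ is not uniformly workable, and the ``symmetric roles of $s$ and $t$'' you invoke do not exist here: $x\in P_2$ is separated from $s$ by $w$ but not from $t$, so after any cut between $x$ and $w$ only the shortcut $xs$ (never $xt$) can reconnect the pieces. Concretely, if the forced edge is $tx^{-}$, no surgery of your type exists: cutting $xx^{-}$ leaves $t$ and $x^{-}$ in the same component, so adding $tx^{-}$ closes a cycle, while keeping $xx^{-}$ leaves $x$ or $x^{-}$ with degree at least $3$. A correct execution must therefore discard the triple $\{xx^{-},xs,xt\}$ and run entirely on $\{xx^{+},xs,xt\}$, whose two outcomes do admit valid surgeries in your scheme (with $x\neq w^{-}$ in hand, $T+sx+tx^{+}+v_5w^{-}-xx^{+}-ww^{-}-wv_5$ and $T+sx+sx^{+}+v_5w^{-}-xx^{+}-ww^{-}-wv_5$ are spanning trees whose only branch vertices are $s$ and $t$). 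For comparison, the paper runs exactly this $x^{+}$-triple argument but cuts on the other side of $w$: if $tx^{+}\in E(G)$ it takes $T+sx+tx^{+}-xx^{+}-ww^{+}$, and otherwise $sx^{+}\in E(G)$ is forced and it takes $T+sx+sx^{+}-xx^{+}-ww^{+}$. Deleting $ww^{+}$ keeps $B_5$ and $w^{-}$ attached to $w$, so Claim~\ref{claim8}, the edge $v_5w^{-}$, and the exclusion $x\neq w^{-}$ are never needed, and both trees have exactly two branch vertices even when $x=w^{-}$.
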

\begin{proof}
Assume that $x\in N(s)\cap N(t)\cap P_2$. \\
If $x^{+}t\in E(G)$ then we have the fact that the tree $T'=T+sx+tx^{+}-xx^{+}-ww^{+}$ is a spanning tree of $G$ with two branch vertices. This gives a contradiction. Hence $tx^{+}\notin E(G)$.Thus, since $\{xs,xt,xx^{+}\}$ is not claw we have $sx^{+}\in E(G)$.\\
We consider the tree $T'=T+sx+sx^{+}-xx^{+}-ww^{+}.$ Hence, $T'$ is a spanning tree of $G$ with two branch vertices, a contradiction.  Therefore, $N(s)\cap N(t)\cap P_2=\emptyset$.
\end{proof}
 \begin{claim}\label{claim13}
$N(s)\cap N(u_5)\cap P_2=\emptyset$.
\end{claim}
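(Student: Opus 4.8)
The plan is to argue by contradiction in the spirit of Claims \ref{claim11} and \ref{claim12}: assume there is $x \in N(s) \cap N(u_5) \cap P_2$ and manufacture a spanning tree of $G$ with only two branch vertices. The two chords at our disposal are $sx$ and $xu_5$, and the guiding principle (already visible in Claim \ref{claim12}) is to reroute so that each edge we add terminates either at an existing branch vertex or at a vertex where we simultaneously delete an incident edge; this way no new branch vertex is created while the branching at $w$ is destroyed.

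First I would exploit claw-freeness at $x$. Since $xx^{+}, xs, xu_5 \in E(G)$ while $su_5 = u_7u_5 \notin E(G)$ by Claim \ref{claim2}, the set $\{x, x^{+}, s, u_5\}$ cannot be a claw, so either $x^{+}s \in E(G)$ or $x^{+}u_5 \in E(G)$. This dichotomy drives the whole argument. Note that in the second case $x^{+} \neq w$, because $wu_5 \notin E(G)$ by Claim \ref{claim8}; hence $x^{+} \in P_2$ and its successor toward $w$ is defined.

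In the case $x^{+}s \in E(G)$ I would take $T' = T + sx + sx^{+} - xx^{+} - ww^{+}$. Deleting $xx^{+}$ and $ww^{+}$ cuts the path into the three pieces $P_T[t,x]$, $P_T[x^{+},w]$ (carrying $B_5$) and $P_T[w^{+},s]$ (carrying $B_1, B_2$), and the two chords $sx$ and $sx^{+}$ glue all three back at $s$; here $x$ and $x^{+}$ stay of degree two (because $xx^{+}$ was removed) while $w$ drops to degree two (because $ww^{+}$ was removed), so the only branch vertices left are $s$ and $t$. In the case $x^{+}u_5 \in E(G)$ I would instead take $T' = T + sx + x^{+}u_5 - xx^{+} - wv_5$: deleting $wv_5$ detaches $B_5$ and lowers $w$ to degree two, $sx$ reattaches $P_T[t,x]$ to $s$, and $x^{+}u_5$ hangs $B_5$ back onto $x^{+}$, which again remains of degree two because $xx^{+}$ is gone. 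In both cases $T'$ is a spanning tree of $G$ with exactly the two branch vertices $s$ and $t$, contradicting the standing assumption that $G$ has no spanning tree with at most two branch vertices.

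The main obstacle is not the verification but the \emph{design} of these two trees: one must arrange the deletions so that neither newly added edge raises a degree-two vertex to degree three. The removal of $xx^{+}$ is precisely what makes this work, since it frees a slot at $x^{+}$ exactly when the second chord is incident to $x^{+}$, and the claw-freeness step is exactly what guarantees that the available second chord ($sx^{+}$ or $x^{+}u_5$) does land on $x^{+}$. A secondary point to check is the degenerate positions $x^{+} = w$ and $P_1 = \emptyset$ (so that $w^{+} = s$); these only decrease the degree of the surviving branch vertex $s$ and do not affect the conclusion, so I would dispose of them with a remark rather than with separate cases.
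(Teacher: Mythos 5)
Your main construction is sound, and it is essentially the paper's own proof reflected to the other side of $x$: the paper works with the predecessor $x^{-}$, first ruling out $sx^{-}\in E(G)$ via the tree $T+sx+sx^{-}-xx^{-}-ww^{+}$, then deducing $u_5x^{-}\in E(G)$ from the non-claw $\{xs,xu_5,xx^{-}\}$ and finishing with $T+u_5x^{-}+sx-xx^{-}-ww^{+}$. You run the same claw-freeness dichotomy at $x$ with the successor $x^{+}$ (both versions hinge on $su_5\notin E(G)$, i.e.\ Claim \ref{claim2}) and kill the branching at $w$ by the same kind of two-chord surgery; in the generic position your case analysis and degree bookkeeping are correct.

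The one genuine weak point is your closing remark about degeneracies. If $x^{+}=w$ and $P_1=\emptyset$ hold simultaneously, then in your first case the added chord $sx^{+}=sw$ coincides with the deleted edge $ww^{+}=ws$, so $T+sx+sx^{+}-xx^{+}-ww^{+}$ collapses to $T+sx-xw-ws$, a graph with $|G|-2$ edges, which is not a spanning tree at all; hence the assertion that degenerate positions ``only decrease the degree of the surviving branch vertex $s$'' is false in that configuration. The gap is easy to close, but not by the reason you give: Claim \ref{claim10}, already established, says that any $x\in N(u_5)\cap P_2$ satisfies $x\not=w^{-}$, so in fact $x^{+}\not=w$ can never occur and only the harmless degeneracy $w^{+}=s$ remains, where both of your trees really are spanning trees with branch vertices $s$ and $t$ only. (Alternatively, the offending corner case is dispatched directly by $T'=T+sx-xw$, which has branch vertices $s$ and $t$ only.) With that one remark replaced by a citation of Claim \ref{claim10} or by this one-line repair, your proof is correct and parallel to the paper's.
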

\begin{proof}
Assume that $x\in N(s)\cap N(u_5)\cap P_2$.\\
If $sx^{-}\in E(G)$ then we consider the tree $T'=T+sx+sx^{-}-xx^{-}-ww^{+}.$ The resulting tree is a spanning tree of $G$ with two branch vertices, a contradiction. Therefore, $sx^{-}\notin E(G)$.
Thus, since $\{xs,xu_5,xx^{-}\}$ is not claw we obtain $u_5x^{-}\in E(G)$.\\
Now, we set the tree $T'=T+u_5x^{-}+sx-xx^{-}-ww^{+}$ then $T'$ is a spanning tree of $G$ with two branch vertices, which is a contradiction. Claim \ref{claim13} is proved.
\end{proof}
By Claims \ref{claim10}-\ref{claim13}, we have $N(s)\cap N(t)\cap P_2=N(s)\cap N(u_5)\cap P_2=\emptyset$, $|N(t)\cap N(u_5)\cap P_2|\leq 1$ and $w^{-}\notin N(I)$ if $|N(t)\cap N(u_5)\cap P_2|=1.$ Hence, combining with Claim \ref{claim6}, we obtain
\begin{equation}\label{eq15}
\sum\limits_{j=1}^7 |N_G(u_j)\cap P_2|\leq |P_2|.
\end{equation}
By (\ref{eq11})-(\ref{eq15}) we conclude that
\begin{align*}
|G|&=\sum\limits_{i=1}^5|B_i|+|P_T[s,t]|\\
&\geq (1+\sum\limits_{i=1}^5\sum\limits_{j=1}^7 |N_G(u_j)\cap B_i|)+3+|P_1|+|P_2|\\
&\geq 3+\sum\limits_{i=1}^5\sum\limits_{j=1}^7 |N_G(u_j)\cap B_i|+\sum\limits_{j=1}^7 |N_G(u_j)\cap P_1|+\sum\limits_{j=1}^7 |N_G(u_j)\cap P_2|+\sum\limits_{j=1}^7 |N_G(u_j)\cap \{w\}|\\
& \geq 3+\deg (I)\geq 3+\sigma_7(G)\geq 3+(|G|-2)=1+|G|.
\end{align*}
This gives a contradiction. Step 1 is proved.

\vspace{0.5cm}
{\bf Step 2.}  $|L(T)| = 6$ and $T$ has three branch vertices $s,w,t$ with $\deg_G{(s)}=4, \deg_G(w)=\deg_G(t)=3$ and $w\in P_T[t,s]$ (see figure 2).

\begin{figure}[h]
	\centering
	\includegraphics[width=0.4\linewidth]{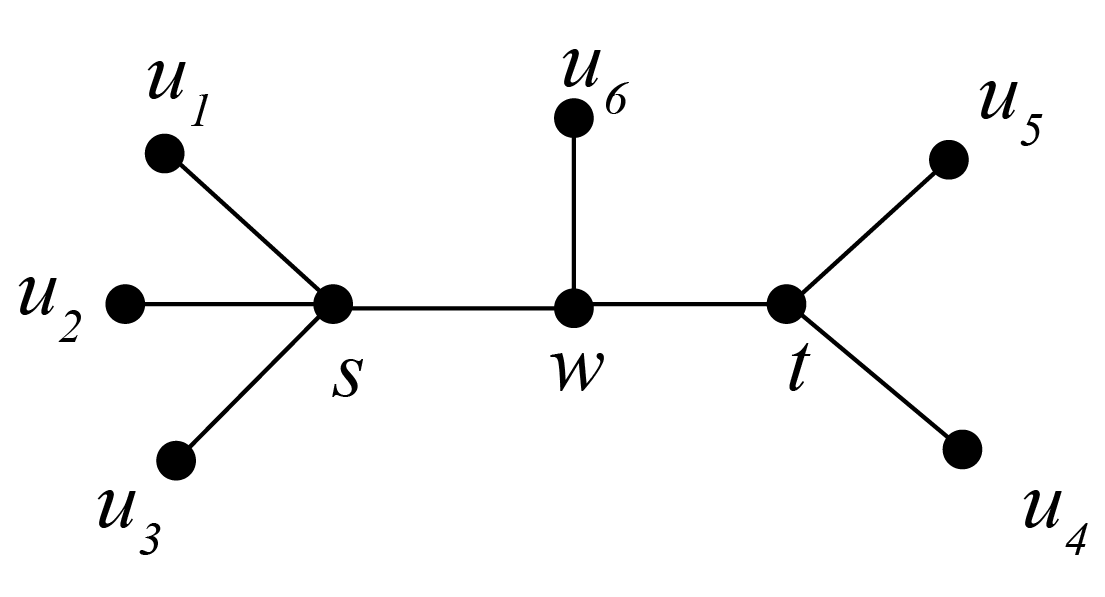}
	\caption[Tree T]{The tree $T$ is in Step 2}
	\label{Pic2}
\end{figure}

Let $L(T)=\{u_1,u_2,u_3,u_4,u_5,u_6\}$ be the set of leaves of $T$.

Let $B_i$ be a vertex set of components of $T-\{s,w,t\}$ such that $L(T)\cap B_i=\{u_i\}$ for $1\leq i\leq 6$ and the only vertex of $N_T\{s,w,t\}\cap B_i$ is denoted by $v_i$. In this step, we may assume $B_i\cap N_T(s) \not= \emptyset\ (1\leq i \leq 3), B_j\cap N_T(t)\not= \emptyset\ (4\leq j \leq 5)$ and $B_6\cap N_T(w)\not= \emptyset.$ Set $P_1=V(P_T[w, s]-\{w,s\}),\ P_2=V(P_T[t, w]-\{t,w\})$ and $P=P_1\cup P_2$. Set $r_1=d_T(s,t), r_2=d_T(s,w)$. For each $x\in P_T[t, s] $ or $ P_T[s, u_i], (1\leq i \leq 3)$ or $ P_T[t, u_j](4\leq j \leq 5)$ or $P_T[w, u_6])$, its successor $x^{+}$ and the predecessor $x^{-}$ are defined, if they exist. 
We choose the tree $T$ such that:\\
(C2)\ $(r_1;r_2)$ is as small as possible in lexicographic order.

Using similar arguments as in the proof of Claims \ref{claim1}, \ref{claim8} we have.
 \begin{claim}\label{claim14}
$t^{+}v_4, t^{+}v_5, v_6w^{+}\notin E(G)$, $v_4v_5, v_6w^{-}\in E(G)$ and $u_6w\notin E(G)$.
\end{claim}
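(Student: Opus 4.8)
The plan is to reproduce, in the present degree pattern where $s$ has degree $4$ and $w,t$ have degree $3$ in $T$, the two edge-exchange arguments already carried out in Claims \ref{claim1} and \ref{claim8}: one centred at the degree-$3$ branch vertex $t$ (carrying the two branches $B_4,B_5$) and one centred at the degree-$3$ branch vertex $w$ (carrying the single branch $B_6$). Throughout I would use the structural fact that each $B_i$ is a \emph{path} from $v_i$ to $u_i$ — it is a subtree of $T$ whose only $T$-leaf is $u_i$ and whose only attachment is $v_i$, hence a tree with at most two leaves — so all its internal vertices have degree $2$. I would also use that $t^{+}$ is the $T$-neighbour of $t$ toward $w$, while $w^{-},w^{+}$ are the $T$-neighbours of $w$ toward $t$ and toward $s$, these successors all existing because $s,w,t$ are three distinct vertices of $P_T[t,s]$.

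First I would settle the three assertions at $t$. If $t^{+}v_4\in E(G)$, pass to $T'=T+t^{+}v_4-tv_4$, which detaches $B_4$ from $t$ and reattaches it at $t^{+}$: then $\deg_{T'}t=2$ and a new degree-$3$ branch vertex appears at $t^{+}$, so either $t^{+}=w$ and $T'$ has only the two branch vertices $s,w$, or $t^{+}$ is internal to $P_2$ and the third branch vertex sits strictly closer to $s$, forcing $d_{T'}(s,t^{+})<r_1$ against (C2). The same swap with $v_5$ gives $t^{+}v_5\notin E(G)$. Having excluded both edges out of $t^{+}$, claw-freeness applied to the three $T$-neighbours $t^{+},v_4,v_5$ of $t$ (the triple $\{tt^{+},tv_4,tv_5\}$ is not a claw) leaves no option but $v_4v_5\in E(G)$.

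Next I would treat $w$, following Claim \ref{claim8} almost verbatim. The swap $T'=T+v_6w^{+}-wv_6$ reattaches $B_6$ at $w^{+}$ and yields either two branch vertices (when $w^{+}=s$) or a strict drop of $r_2=d_T(s,w)$ with $r_1$ unchanged, against (C2); hence $v_6w^{+}\notin E(G)$. The auxiliary swap $T'=T+w^{+}w^{-}-ww^{-}$ reroutes $P_T[t,s]$ so as to bypass $w$, shortening that path and thus decreasing $r_1$ (or producing two branch vertices when $w^{+}=s$), against (C2); hence $w^{+}w^{-}\notin E(G)$. Claw-freeness at $w$ applied to $\{ww^{-},ww^{+},wv_6\}$, together with these two excluded edges out of $w^{+}$, then forces $v_6w^{-}\in E(G)$. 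For the final assertion I would assume $u_6w\in E(G)$ (non-trivial exactly when $u_6\neq v_6$) and exploit the edge $v_6w^{-}$ just obtained: in $T'=T+u_6w+v_6w^{-}-wv_6-ww^{-}$ the two deletions split $T$ into the $s$-side (still containing $w$), the path $B_6$, and the $t$-side, while the two insertions splice $B_6$ between them through $u_6w$ and $v_6w^{-}$, so $T'$ is again a spanning tree; since $\deg_{T'}w=2$, its only branch vertices are $s$ and $t$, the desired contradiction.

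The arguments are individually light; the real bookkeeping — and the place I expect to spend the most care — is the handling of the degenerate configurations in each swap, namely $t^{+}=w$ (empty $P_2$), $w^{+}=s$ (empty $P_1$) and $w^{-}=t$, where an exchange collapses a third branch vertex outright and delivers a two-branch spanning tree rather than a violation of (C2). For the last item one must also verify that removing the two tree edges $wv_6,ww^{-}$ while adding $u_6w,v_6w^{-}$ really reconnects all three pieces into a single spanning tree with $w$ demoted to degree $2$ (a count of $|V|-1$ edges plus connectivity). Once these case checks are in place, the two claw-free deductions $v_4v_5\in E(G)$ and $v_6w^{-}\in E(G)$ are immediate, and the claim follows.
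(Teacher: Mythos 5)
Your proof is correct and is essentially the paper's own argument: the paper disposes of this claim by saying it follows ``using similar arguments as in the proof of Claims~\ref{claim1}, \ref{claim8}'', and your write-up is precisely those edge-exchange and claw-freeness arguments transplanted to the Step~2 configuration (swaps at $t$ giving $v_4v_5\in E(G)$, swaps at $w$ giving $v_6w^{-}\in E(G)$, and the final two-edge splice ruling out $u_6w$), including the correct treatment of the degenerate cases $t^{+}=w$, $w^{+}=s$, $w^{-}=t$ where the exchange yields a two-branch-vertex tree rather than a violation of (C2).
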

Set $u_7=t$ and $ I=\{u_1;u_2;u_3;u_4;u_5;u_6;u_7\}$.  
 \begin{claim}\label{claim15}
$I$ is an independent set and $N_3(I)=\emptyset.$
\end{claim}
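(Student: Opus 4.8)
The plan is to follow the template of Claim \ref{claim2} essentially verbatim, the only genuinely new feature being that the branch vertex $s$ now has degree $4$ rather than $3$. Because of this, an edge-exchange that detaches a single component from $s$ lowers $\deg_{T}(s)$ only to $3$ and so does \emph{not} immediately kill a branch vertex, as it did in Step 1. I would first prove that $I=\{u_1,\dots,u_6,u_7=t\}$ is independent, by ruling out every possible edge inside $I$ through a local exchange of one (or two) edges producing a spanning tree with at most two branch vertices; then $N_3(I)=\emptyset$ will come for free from claw-freeness.

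For a pair of leaves with $u_iu_j\in E(G)$ in which at least one index lies in $\{4,5,6\}$ (so the corresponding component hangs on the degree-$3$ vertex $t$ or $w$), I would choose $i$ with $v_i^{-}\in\{t,w\}$ and set $T'=T+u_iu_j-v_iv_i^{-}$; the edge $v_iv_i^{-}$ lies on the cycle created by $u_iu_j$, its deletion drops $v_i^{-}$ to degree $2$, and the only remaining branch vertices are $s$ together with whichever of $\{t,w\}$ was untouched, contradicting the hypothesis that $G$ has no spanning tree with at most two branch vertices. The new case is $i,j\in\{1,2,3\}$, i.e. both components hang on $s$: here $T'=T+u_iu_j-sv_i$ merely lowers $\deg_{T'}(s)$ to $3$, so $T'$ still has three branch vertices. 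However $v_i$ (equivalently $u_i$, when $B_i$ is a single vertex) becomes a new leaf while $u_i,u_j$ cease to be leaves, so $T'$ is a spanning tree with exactly $5$ leaves and three branch vertices $s,w,t$, all of degree $3$, with $w\in P_{T'}[t,s]$ --- precisely the configuration already excluded in Step 1. This reduction to Step 1 is the crux of the argument and, I expect, the main obstacle: it is exactly what makes it essential that the case $|L(T)|=5$ has been settled beforehand.

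For an edge joining a leaf to $u_7=t$, I would argue according to where $B_i$ sits. If $i\in\{1,2,3\}$ or $i=6$ (the component lies on $s$ or on $w$), then the cycle created by $u_it$ passes through $w$, and deleting the appropriate edge of that cycle incident to $w$ collapses $w$ to degree $2$, leaving only the two branch vertices $s$ and $t$. If $i\in\{4,5\}$ the cycle stays inside $B_i\cup\{t\}$ and cannot reach $w$, so I would instead invoke the edge $v_4v_5\in E(G)$ supplied by Claim \ref{claim14} and take $T'=T+tu_i+v_4v_5-tv_4-tv_5$; this turns $t$ into a degree-$2$ vertex, splices $B_4$ and $B_5$ together, and produces a spanning tree whose only branch vertices are $s$ and $w$. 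In every case we reach the forbidden situation of at most two branch vertices (or the excluded Step-1 tree), so no edge of $I$ can exist and $I$ is independent.

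Finally, $N_3(I)=\emptyset$ is immediate once independence is known: a vertex $x$ with exactly three neighbours $a,b,c\in I$ would, since $I$ is independent, have $a,b,c$ pairwise non-adjacent, so $\{x;a,b,c\}$ would induce a claw $K_{1,3}$, contradicting that $G$ is claw-free. Thus the proof of Claim \ref{claim15} runs parallel to that of Claim \ref{claim2}, with the single degree-$4$ vertex $s$ absorbed by the reduction to the already-settled case of Step 1.
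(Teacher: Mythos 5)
Your proposal is correct and follows essentially the same route as the paper: the exchange $T'=T+u_iu_j-v_iv_i^{-}$ for leaf pairs (with the reduction to Step 1 exactly when the deleted edge is at the degree-$4$ vertex $s$), the tree $T'=T+tu_i-ww^{-}$ for edges from $t$ to leaves over $s$ or $w$, the two-edge exchange $T'=T+tu_j+v_4v_5-tv_4-tv_5$ using Claim \ref{claim14} for $j\in\{4,5\}$, and claw-freeness for $N_3(I)=\emptyset$. The only (harmless) deviation is that for mixed pairs with one component at $s$ and one at $t$ or $w$ you delete the edge at $t$ or $w$ to get two branch vertices directly, whereas the paper deletes $v_iv_i^{-}$ at the smaller index and so may pass through the Step 1 reduction there as well.
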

\begin{proof}
For $1\leq i<j\leq 6$, if $u_iu_j\in E(G)$ then we consider the tree 
$$	T' = 	T+u_iu_j-v_iv_i^{-}.$$
Then either the resulting tree $T'$ is a spanning tree of $G$ with two branch vertices, a contradiction, or coming back Step 1 to give a contradiction. Then  $u_iu_j\notin E(G)$ for all $1\leq i<j\leq 6.$\\
If $u_7u_j\in E(G)$, $j\in \{4;5\}$ then by Claim \ref{claim14} we can see that the tree $T'=T+u_ju_7+v_4v_5-u_7v_4-u_7v_5$ is a spanning tree of $G$ with two branch veritces, a contradiction.\\
Now if $u_7u_i\in E(G)$ for some $i\in\{1;2;3;6\}$ then the tree $T'=T+u_7u_i-ww^{-}$
is a spanning tree of $G$ with two branch vertices, a contradiction. Hence $u_7u_i\in E(G)$ for all  $i\in\{1;2;3;6\}$. Therefore, $I$ is an independent set.\\
Moreover, since $G$ is claw-free and $I$ is an independent set we obtain $N_3(I)=\emptyset.$ \\
Claim \ref{claim15} is proved.
\end{proof}

\begin{claim}\label{claim16}
	$v_i\notin N(u_j)$ for all \ $1\leq i\not= j\leq 6$, $u_7v_6\not\in E(G)$ and $\sum\limits_{i=1}^3 |N_G(u_7)\cap \{v_i\}| \leq 1$.
\end{claim}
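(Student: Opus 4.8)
The plan is to prove all three assertions by the local exchange technique of Claims \ref{claim1}--\ref{claim3}: assuming a forbidden edge (or pair of edges) exists, I build a new spanning tree $T'$ by trading tree-edges incident to $s,w,t$ for the assumed chord(s), and then read off a contradiction from the degrees of $s,w,t$ in $T'$. A given swap yields one of two contradictions: either $T'$ turns out to have exactly two branch vertices, which is impossible by the standing assumption on $G$; or the swap only lowers $\deg_{T'}(s)$ from $4$ to $3$ while keeping $\deg_{T'}(w)=\deg_{T'}(t)=3$, in which case $T'$ is a spanning tree with exactly five leaves and three degree-$3$ branch vertices $s,w,t$ satisfying $w\in P_{T'}[t,s]$, i.e. the configuration already excluded in Step 1.

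For the first assertion, suppose $v_iu_j\in E(G)$ with $1\le i\ne j\le 6$, and set $T'=T+v_iu_j-v_iv_i^{-}$. Since every $u_j$ is a leaf, adding $v_iu_j$ raises $\deg(u_j)$ from $1$ to $2$, while removing $v_iv_i^{-}$ detaches the branch $B_i$ from its branch vertex and reattaches it through $u_j$, so $T'$ is again a spanning tree. When $i\in\{1,2,3\}$ we have $v_i^{-}=s$, hence $\deg_{T'}(s)=3$ and $T'$ falls into the Step 1 configuration (the path $P_T[t,s]$ is untouched, so $w$ still lies on it); when $i\in\{4,5\}$ we have $v_i^{-}=t$ and $\deg_{T'}(t)=2$, and when $i=6$ we have $v_i^{-}=w$ and $\deg_{T'}(w)=2$, so in these cases $T'$ has only two branch vertices $s$ and the remaining one. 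Either way we reach a contradiction, which gives $v_i\notin N(u_j)$.

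For the second assertion, if $u_7v_6=tv_6\in E(G)$ I would take $T'=T+tv_6-wv_6$: this drops $\deg_{T'}(w)$ to $2$ and raises $\deg_{T'}(t)$ to $4$, leaving $s$ and $t$ as the only branch vertices, a contradiction. For the third assertion, suppose $t$ were adjacent to two of $v_1,v_2,v_3$; by symmetry say $tv_1,tv_2\in E(G)$. Then I would take $T'=T+tv_1+tv_2-sv_1-sv_2$, which reattaches $B_1,B_2$ to $t$, lowers $\deg_{T'}(s)$ to $2$ and raises $\deg_{T'}(t)$ to $5$, so that $w$ and $t$ are the only branch vertices of $T'$ — again a contradiction. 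Hence $t$ meets at most one $v_i$ with $i\in\{1,2,3\}$, which is precisely $\sum_{i=1}^3|N_G(u_7)\cap\{v_i\}|\le 1$.

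The routine part is verifying that each $T'$ is a genuine spanning tree, since every exchange pairs one deletion with one insertion that reconnects exactly the detached branch. The point requiring care is the degree bookkeeping at $s,w,t$: because $\deg_G(s)=4$ while $\deg_G(w)=\deg_G(t)=3$, a single swap at $s$ only degrades the tree to the three-branch, degree-$3$ shape of Step 1 rather than to a two-branch tree, so one must invoke Step 1 precisely in the cases $i\in\{1,2,3\}$ and confirm there that $w$ remains on $P_{T'}[t,s]$, whereas swaps at $w$ or $t$, and the double swap of the third assertion, destroy a branch vertex outright and close the argument at once.
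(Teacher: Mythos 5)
Your proof is correct and follows essentially the same route as the paper: the paper handles the first two assertions by citing the exchange argument of Claim \ref{claim3} (the trees $T+v_iu_j-v_iv_i^{-}$ and $T+tv_6-wv_6$), and for the third assertion uses exactly your double swap $T'=T+u_7v_i+u_7v_j-sv_i-sv_j$. Your explicit case analysis---noting that swaps detaching a branch at $s$ only reduce $T'$ to the Step 1 configuration (with $w$ still on $P_{T'}[t,s]$), while swaps at $w$ or $t$ yield a two-branch-vertex tree outright---is precisely the bookkeeping the paper leaves implicit.
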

\begin{proof}
$v_i\notin N(u_j)$ for all \ $1\leq i\not= j\leq 6$, $u_7v_6\not\in E(G)$ is proved by the similar arguments as in proof of Claim \ref{claim3}.\\
If there exist $u_7v_i, u_7v_j\in E(G)$ for some $i,j\in \{1;2;3\}, i\not=j$ then $T'=T+u_7v_i+u_7v_j-sv_i-sv_j$ is a spanning tree of $G$ with two branch vertices, a contradiction. Hence $\sum\limits_{i=1}^3 |N_G(u_7)\cap \{v_i\}| \leq 1$.
\end{proof}
Using the same arguments as in proofs of Claim \ref{claim4} and Claim \ref{claim5} we may prove the following claims.
\begin{claim}\label{claim17}
	For all $1\leq i\leq 6$, $1\leq j\leq 7$, $j\not= i,$ if $x\in B_i\cap N_G(u_j)$  then $x\not= u_i$ and $x^{-}\notin N(I-\{u_j\})$.
\end{claim}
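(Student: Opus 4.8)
The plan is to follow the proof of Claim \ref{claim4} almost verbatim, adjusting the edge-exchange arguments to the two structural features of Step 2: the vertex $s$ now has degree $4$, and the branch vertex lying in $I$ is $t=u_7$ rather than $s$. The first assertion is immediate: if $x=u_i$ then $u_iu_j\in E(G)$ with $u_i,u_j\in I$, contradicting the independence of $I$ established in Claim \ref{claim15}. So the substance is the statement $x^{-}\notin N(I-\{u_j\})$, which I prove by contradiction: assume $u_kx^{-}\in E(G)$ for some $u_k\in I-\{u_j\}$ and produce a spanning tree that either has at most two branch vertices or realises the configuration forbidden in Step 1.

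I would organise the argument by the branch $B_i$ containing $x$, the controlling principle being the effect of deleting the edge joining $B_i$ to $\{s,w,t\}$ while rerouting $B_i$ along the two new edges $u_jx$ and $u_kx^{-}$. Assume first $j,k\neq 7$, so that $u_j,u_k$ are leaves. If $i=6$, then
\[ T'=T+u_jx+u_kx^{-}-xx^{-}-wv_6 \]
lowers $\deg(w)$ to $2$ and leaves a spanning tree with the two branch vertices $s,t$; if $i\in\{4,5\}$, the same exchange with $tv_i$ deleted in place of $wv_6$ lowers $\deg(t)$ to $2$, again leaving only two branch vertices. Either outcome contradicts the standing assumption. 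If instead $i\in\{1,2,3\}$, deleting $sv_i$ merely lowers $\deg(s)$ from $4$ to $3$, so
\[ T'=T+u_jx+u_kx^{-}-xx^{-}-sv_i \]
is a spanning tree whose three branch vertices $s,w,t$ each have degree $3$, with $w\in P_{T'}[t,s]$ and exactly five leaves — precisely the tree forbidden in Step 1. In each case $T'$ is genuinely a tree: deleting $xx^{-}$ splits $B_i$ into the piece containing $v_i$ and the piece containing $u_i$, and the two added edges reattach these pieces to the remainder, while the leaves $u_j,u_k$ only rise to degree $2$.

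The main obstacle is the case $u_j=t$ (equivalently $j=7$), together with its mirror $u_k=t$, for then $u_jx=tx$ raises the degree of $t$ and the naive exchange no longer removes a branch vertex; this is the analogue of Case 2 of Claim \ref{claim4}. When $i\in\{1,2,3\}$ the difficulty is mild: using $v_4v_5\in E(G)$ from Claim \ref{claim14}, I would take
\[ T'=T+tx+u_kx^{-}+v_4v_5-xx^{-}-sv_i-tv_4, \]
which keeps $\deg(t)=3$ while dropping $\deg(s)$ to $3$, once more yielding a Step 1 tree. The genuinely delicate situation is $u_j=t$ with $i\in\{4,5\}$, where both the branch $B_i$ and the chord $tx$ meet $t$, so that reattaching the two pieces of $B_i$ is forced to reuse $t$ and a single exchange cannot lower $\deg(t)$. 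Here I would imitate Subcases 2.1--2.2 of Claim \ref{claim4}, invoking claw-freeness at $t$ to force $x$ to be adjacent either to the $P_T[t,s]$-neighbour of $t$ or to the other branch-neighbour of $t$, and then performing a secondary exchange contradicting either the minimality (C2) of $(r_1;r_2)$ or, again, Step 1. The case $u_k=t$ is symmetric, which completes the proof of Claim \ref{claim17}.
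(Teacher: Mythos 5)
Your handling of the cases where neither chord ends at $t=u_7$ is correct, and your sketch for $u_j=t$ with $i\in\{4,5\}$ (claw-freeness at $t$, using $t^{+}v_4,t^{+}v_5\notin E(G)$ from Claim \ref{claim14}, then a secondary exchange against (C2)) is the right analogue of Case 2 of Claim \ref{claim4}. The genuine gap is precisely the case you call ``mild'': $u_j=t$ with $i\in\{1,2,3\}$. Your tree
\[
T'=T+tx+u_kx^{-}+v_4v_5-xx^{-}-sv_i-tv_4
\]
is not a Step 1 tree. Check degrees: $t$ keeps degree $3$ and $s$ drops to $3$, as you say, but $v_5$ gains the edge $v_4v_5$ while losing nothing, so whenever $|B_5|\geq 2$ the vertex $v_5$ becomes a \emph{fourth} branch vertex of degree $3$. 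The resulting tree has branch vertices $s,w,t,v_5$ arranged along a path, i.e.\ exactly the configuration of Step 3 (Case 2), not of Step 1. You cannot appeal to Step 3 at this point: it is treated after Step 2, and its proof (Claims \ref{claim26}, \ref{claim33}, \ref{claim34}) invokes Step 2, so the argument would be circular.

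Nor is the repair routine. The pattern of Claim \ref{claim4} is to delete \emph{both} pendant edges at the vertex playing the role of $u_7$ when the edge between its two branch-neighbours is inserted; here that means $T'=T+tx+u_kx^{-}+v_4v_5-xx^{-}-tv_4-tv_5$, which does leave only the two branch vertices $s,w$ --- but this graph is a spanning tree only when $u_k\in B_4\cup B_5$, since otherwise $u_kx^{-}$ joins two vertices of the same component while $B_4\cup B_5$ is left stranded. For $k=6$ one can instead delete $wv_6$ (leaving the two branch vertices $s,t$). But for $k\in\{1,2,3\}$ (including $k=i$) the natural exchange $T+tx+u_kx^{-}-xx^{-}-sv_i$ produces three branch vertices with $s,w$ of degree $3$ and $t$ of degree $4$: this is the Step 2 configuration with the degree-$4$ end moved from $s$ to $t$, whose parameter pair is $(r_1;r_1-r_2)$, and that contradicts the lexicographic minimality (C2) only when $r_1-r_2<r_2$. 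So this sub-case, together with its mirror $u_k=t$ (which you dismiss as ``symmetric'' but which suffers the same defect for $i\in\{1,2,3\}$), is not actually settled by your write-up; it needs a further idea beyond transplanting Claim \ref{claim4}.
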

\begin{claim}\label{claim18}
	We have $N_2(I-\{u_i\})\cap B_i=\emptyset$ for all $1\leq i\leq 6$.
\end{claim}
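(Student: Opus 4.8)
The plan is to argue by contradiction, mirroring the proof of Claim~\ref{claim5} line for line but drawing on Claim~\ref{claim17} in place of Claim~\ref{claim4}. Suppose that for some index $i$ with $1\leq i\leq 6$ there is a vertex $x\in N_2(I-\{u_i\})\cap B_i$. Then $x$ has exactly two neighbours in $I-\{u_i\}$, say $u_j$ and $u_k$ with $j\neq k$ and $i\notin\{j,k\}$, so that $xu_j,xu_k\in E(G)$.

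First I would locate the predecessor $x^-$ and show that it is adjacent to no vertex of $I$. Since $x\in B_i\cap N_G(u_j)$ and likewise $x\in B_i\cap N_G(u_k)$, Claim~\ref{claim17} applies with each of $j$ and $k$, yielding $x^-\notin N(I-\{u_j\})$ and $x^-\notin N(I-\{u_k\})$. The decisive observation is that $(I-\{u_j\})\cup(I-\{u_k\})=I$, because $j\neq k$; passing to neighbourhoods gives $N(I-\{u_j\})\cup N(I-\{u_k\})=N(I)$, whence $x^-\notin N(I)$. Thus $x^-$ is adjacent to none of the vertices of $I$, and in particular to neither $u_j$ nor $u_k$.

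Next I would exhibit the forbidden claw. The three vertices $u_j,u_k,x^-$ all lie in $N(x)$ and are pairwise nonadjacent: $u_ju_k\notin E(G)$ since $I$ is independent by Claim~\ref{claim15}, while $x^-u_j,x^-u_k\notin E(G)$ since $x^-\notin N(I)$. Hence $\{xu_j,xu_k,xx^-\}$ is a claw centred at $x$, contradicting the claw-freeness of $G$, and therefore $N_2(I-\{u_i\})\cap B_i=\emptyset$, as required.

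The only delicate point --- and the reason the argument must genuinely rest on Claim~\ref{claim17} rather than be quoted verbatim from the five-leaf case --- is the bookkeeping that guarantees $x^-$ is a legitimate vertex distinct from $u_j$ and $u_k$. When $x$ is an interior vertex of $B_i$ this is immediate, but when $x=v_i$ the predecessor $x^-$ is the branch vertex adjacent to $B_i$, which here may be $t=u_7$; one must then confirm, via the finer conclusions folded into Claim~\ref{claim17} (the analogues of parts (b)--(d) of Claim~\ref{claim4}), that $x^-\neq u_j,u_k$ and that $x^-\notin N(I)$ still holds, so that the claw does not degenerate. Apart from this boundary check the statement is a routine transcription of Claim~\ref{claim5} to the configuration $I=\{u_1,\dots,u_6,u_7\}$, and I anticipate no further obstacle.
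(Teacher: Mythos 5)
Your main line of argument is exactly the paper's: the paper proves this claim only by saying it follows by "the same arguments as in the proofs of Claim \ref{claim4} and Claim \ref{claim5}", and your two applications of Claim \ref{claim17} (once for $j$, once for $k$), the observation that $j\neq k$ gives $N(I-\{u_j\})\cup N(I-\{u_k\})=N(I)$, and the resulting claw $\{xu_j,xu_k,xx^{-}\}$ are precisely that transcription. The one place your write-up goes astray is the boundary case $x=v_i$, which you correctly flag but then resolve by appeal to "the finer conclusions folded into Claim \ref{claim17} (the analogues of parts (b)--(d) of Claim \ref{claim4})". Claim \ref{claim17} contains no such conclusions: in Step 2 it asserts only $x\neq u_i$ and $x^{-}\notin N(I-\{u_j\})$, and this omission is deliberate, because here $u_7=t$ genuinely may be adjacent to $v_i$ (it is adjacent to $v_4,v_5$ by tree edges, and Claim \ref{claim16} allows one adjacency among $v_1,v_2,v_3$), so the analogues of (b)--(d) are simply false in this configuration. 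The correct way to kill the degenerate case is Claim \ref{claim16} itself together with independence of $I$: since $v_iu_j\notin E(G)$ for all $1\leq i\neq j\leq 6$ and $u_7v_6\notin E(G)$, any $v_i$ has at most one neighbour in $I-\{u_i\}$ (namely, possibly $u_7$), hence $v_i\notin N_2(I-\{u_i\})$ and the vertex $x$ in your argument is automatically an interior vertex of $B_i$. Then $x^{-}$ lies in $B_i$ and outside $I$, the three vertices $u_j,u_k,x^{-}$ are pairwise distinct, and your claw is non-degenerate. With that single citation repaired, your proof is complete and coincides with the paper's intended argument.
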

By Claim \ref{claim17} and Claim \ref{claim18} we firstly have
\begin{equation}\label{eq21}
\sum\limits_{i\in\{4;5\}}\sum\limits_{j=1}^7 |N_G(u_j)\cap B_i|\leq \sum\limits_{i\in\{4;5\}}|B_i|.
\end{equation}
After that, by Claim \ref{claim16} we also obtain 
\begin{equation}\label{eq22}
\sum\limits_{i\in\{1;2;3;6\}}\sum\limits_{j=1}^7 |N_G(u_j)\cap B_i|\leq \sum\limits_{i\in\{1;2;3;6\}}|B_i|-3.
\end{equation}
Since $\{sv_1,sv_2,sv_3\}$ is not claw there exist two vertices which we may assume that $v_1,v_2$ such that $v_1v_2\in E(G)$.
 \begin{claim}\label{claim19}
$N_G(u_i)\cap P_T [t,s]=\emptyset$ for $i=1;2$.
\end{claim}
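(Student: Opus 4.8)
The plan is to mimic the structure of Claim~\ref{claim6} and Claim~\ref{claim9}, exploiting the minimality condition (C2) together with the claw-free hypothesis. First I would observe that the vertices $u_1,u_2,u_3$ all lie in components attached at $s$, so for a vertex $u_i$ with $i\in\{1;2;3\}$ and a vertex $y\in P_T[t,s]$, I would swap the edge $v_iv_i^{-}$ (detaching the pendant subtree $B_i$ from its path neighbor) for the new edge $yu_i$. This operation reattaches $B_i$ at a point $y$ strictly closer to $s$ along $P_T[t,s]$, which should either strictly decrease the pair $(r_1;r_2)$ in lexicographic order, contradicting (C2), or produce a spanning tree with at most two branch vertices. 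Either outcome is the desired contradiction, so no such $y$ can exist and $N_G(u_i)\cap P_T[t,s]=\emptyset$.

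The key steps, in order, are: (1) fix $i\in\{1;2\}$ and suppose $y\in N_G(u_i)\cap P_T[t,s]$; (2) form the candidate tree $T'=T+yu_i-v_iv_i^{-}$ and check it is still a spanning tree of $G$ (the removed edge $v_iv_i^{-}$ disconnects exactly the piece of $B_i$ containing $u_i$, and the added edge $yu_i$ reconnects it); (3) verify that $T'$ has branch vertices whose configuration either violates (C2) by reducing $r_1$ or $r_2$, or collapses the number of branch vertices to at most two. I would then handle $i=3$ by the same swap, noting that $s$ currently has degree $4$, so detaching one branch at $s$ and reattaching it along the path either reduces $\deg_G(s)$ at the branch structure or shifts branch vertices favorably, again contradicting minimality. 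Because $s,w,t$ play the roles set up in Step~2, I expect to need the precise positions: $y$ could lie in $P_1$, in $P_2$, or equal $w$, and each subcase must confirm the tree surgery is valid.

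The main obstacle I anticipate is the bookkeeping around the branch vertex $s$ having degree $4$ in this step (unlike Step~1, where all three branch vertices had degree $3$). When I detach $B_i$ from its attachment at $s$ and reattach $u_i$ to some $y$ on the path, I must confirm that $s$ does not remain a degree-$4$ branch vertex in a way that creates a fourth branch vertex at $y$ or at $w$, which would leave me with a tree having three or more branch vertices and hence no immediate contradiction. The careful point is that reattaching at an interior path vertex $y$ may turn $y$ into a new branch vertex, so I must argue that the lexicographic decrease in $(r_1;r_2)$ still applies or that the total branch count does not exceed two; this is exactly where the definition of $r_1=d_T(s,t)$ and $r_2=d_T(s,w)$ and the lexicographic minimality do the real work. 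Once that is settled the claim follows for $i\in\{1;2\}$ directly, and the case $i=3$ is then verified by the symmetric argument using the third branch at $s$.
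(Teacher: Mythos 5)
Your single-edge swap does not work in Step 2, and the obstacle you flagged is fatal rather than technical. Since $\deg_T(s)=4$ here, the move $T'=T+yu_i-v_iv_i^{-}$ (note $v_i^{-}=s$ for $i\in\{1;2;3\}$, so this is $T'=T+yu_i-sv_i$) leaves $s$ with degree $3$: it is \emph{still} a branch vertex. Meanwhile $w,t$ keep degree $3$ and $y$ gains one. So if $y\in P$ the new tree has four branch vertices $s,y,w,t$ on a common path (a Step 3 configuration); if $y=w$ it has three branch vertices with the middle one of degree $4$ (Step 3 with $z=w$); if $y=t$ it is of Step 2 type with the roles of $s$ and $t$ swapped. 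None of these yields a contradiction at this point of the paper: configurations with four branch vertices, or with a middle degree-$4$ vertex, are excluded only in Steps 3 and 4, which come \emph{after} Step 2 and whose proofs invoke Step 2 — appealing to them here would be circular. Nor can (C2) rescue the argument, contrary to what you assert: your surgery never touches an edge of $P_T[t,s]$, so $d_{T'}(s,t)=r_1$ and $d_{T'}(s,w)=r_2$ are unchanged (and in the case $y=t$ the relevant new pair $(d_{T'}(t,s),d_{T'}(t,w))$ need not be smaller either). Both disjuncts of your step (3) — ``violates (C2)'' or ``collapses to at most two branch vertices'' — are therefore false.

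The missing idea is the edge $v_1v_2$, which the paper establishes immediately before the claim precisely for this purpose: since $\{sv_1,sv_2,sv_3\}$ cannot induce a claw, two of these neighbors are adjacent, relabelled so that $v_1v_2\in E(G)$. The paper's surgery is the double detachment $T'=T+xu_i+v_1v_2-sv_1-sv_2$ for $i\in\{1;2\}$: removing \emph{both} edges $sv_1,sv_2$ drops $\deg(s)$ to $2$, so $s$ ceases to be a branch vertex, while $v_1v_2$ merges $B_1$ and $B_2$ into one pendant path reattached at $x$ through $u_i$, giving a tree with $5$ leaves. If $x=w$ or $x=t$, that vertex becomes the unique degree-$4$ vertex and $T'$ has exactly two branch vertices, contradicting the standing hypothesis; if $x=s$ or $x\in P$, then $T'$ has three degree-$3$ branch vertices and $5$ leaves, i.e.\ a Step 1 tree, contradicting the already-completed Step 1. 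This also explains why the claim is stated only for $i=1;2$ (a point your plan to ``handle $i=3$ by the same swap'' misses): the reattaching edge $xu_i$ must land inside $B_1\cup B_2$ for the detached pair to be reconnected, so the surgery cannot be run with $u_3$; the vertex $u_3$ is instead treated separately in Claims \ref{claim20}--\ref{claim24}.
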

\begin{proof}
Suppose that there exists $x\in N(u_i)\cap P_T[t,s]$ with $i=1;2$. \\
If either $x=w$ or $x=t$ then we consider the tree $T'=T+xu_i+v_1v_2 - sv_1-sv_2.$ The tree $T'$ is a spanning tree of $G$ with 2 branch vertices, a contradiction. \\
If either $x=s$ or $x\in P$  then we consider a new tree $T'=T+xu_i+v_1v_2-sv_1-sv_2.$ Now, using the same arguments as in Step 1 we give a contradiction.\\
Claim \ref{claim19} is proved.
\end{proof}

\begin{claim}\label{claim20} We have
\begin{equation}\label{eq23}
\sum\limits_{j=1}^7 |N_G(u_j)\cap \{s\}|=0.
\end{equation}
\end{claim}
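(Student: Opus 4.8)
\textbf{Proof proposal for Claim \ref{claim20}.}

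The plan is to show that no leaf or branch vertex in $I$ can be adjacent to $s$, i.e. $N_G(u_j)\cap\{s\}=\emptyset$ for all $1\le j\le 7$, which immediately gives \eqref{eq23}. I would split this into the leaves $u_1,\dots,u_6$ and the branch endpoint $u_7=t$, handling each kind of edge $su_j$ by exhibiting an exchange tree with at most two branch vertices (or one contradicting the minimality condition (C2)), exactly in the spirit of Claims \ref{claim3}, \ref{claim7} and \ref{claim19}. Note first that $s u_7 = st\notin E(G)$ would reduce the distance $r_1=d_T(s,t)$, so by (C2) (or because the resulting tree has fewer branch vertices) this edge cannot occur.

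For the leaves I would argue by cases according to which subtree $B_i$ the leaf $u_j$ lives relative to $s$. If $j\in\{1,2,3\}$, so $u_j$ hangs off $s$ through $v_j$, then $su_j\in E(G)$ lets me swap: delete the edge $v_jv_j^{-}$ attaching $u_j$'s branch and add $su_j$, producing a spanning tree whose branch structure drops below three branch vertices, a contradiction. If $j\in\{4,5\}$ (leaves on the $t$-side) or $j=6$ (the leaf on the $w$-side), the edge $su_j$ together with Claim \ref{claim14} (the edges $v_4v_5$, $v_6w^{-}\in E(G)$) allows the analogous exchange: I add $su_j$ and the relevant chord $v_4v_5$ or $v_6w^{-}$ while deleting the two edges $tv_4,tv_5$ (respectively $wv_6,ww^{-}$) so that $t$ (respectively $w$) stops being a branch vertex, again leaving at most two branch vertices. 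In every case the constructed $T'$ contradicts the standing assumption that $G$ has no spanning tree with at most two branch vertices.

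The main obstacle I anticipate is the bookkeeping in the $\{4,5\}$ and $6$ cases: one must verify that after rerouting, the vertex $s$ indeed becomes a degree-$3$ (or lower) branch vertex while simultaneously \emph{removing} a branch vertex on the far side, so that the net count of branch vertices is at most two rather than remaining three. This is where Claim \ref{claim14} does the real work, since it guarantees the substitute edges $v_4v_5$ and $v_6w^{-}$ are present to keep the far subtrees connected after the deletions. A secondary subtlety is distinguishing the two contradiction mechanisms (violating (C2) versus producing a two-branch-vertex tree); for the leaf cases the latter always applies, so I would simply exhibit the exchange tree and count branch vertices, leaving the minimality condition only for the $st$ edge. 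Once all seven cases are dispatched, summing gives $\sum_{j=1}^{7}|N_G(u_j)\cap\{s\}|=0$, which is \eqref{eq23}.
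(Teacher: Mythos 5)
Your exchanges for $j\in\{4,5,6\}$ are correct: $T+su_j+v_4v_5-tv_4-tv_5$ (for $j\in\{4,5\}$) and $T+su_6+v_6w^{-}-wv_6-ww^{-}$ are indeed spanning trees with exactly two branch vertices ($s$ together with $w$, respectively $s$ together with $t$), so they give the desired contradiction; the paper does the same job more economically with the single exchange $T'=T+su_i-ww^{+}$, which handles all of $i\in\{4,5,6,7\}$ at once and makes your separate, somewhat vague treatment of the edge $st$ (you never specify which edge of the created cycle is deleted) unnecessary. The real problem is the case $j\in\{1,2,3\}$, which is where the whole difficulty of the claim lies. For such $j$ the edge ``$v_jv_j^{-}$ attaching $u_j$'s branch'' is precisely $sv_j$, so your exchange $T'=T+su_j-sv_j$ merely re-attaches $B_j$ to $s$ through $u_j$ instead of through $v_j$: the vertex $s$ loses the neighbor $v_j$ but gains $u_j$ and keeps degree $4$, while $w$ and $t$ keep degree $3$. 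Thus $T'$ still has three branch vertices, is again a tree of the Step 2 type, and has the same $(r_1,r_2)$, so neither (C2) nor the no-two-branch-vertex assumption is violated; your assertion that the branch structure ``drops below three branch vertices'' is false. (Exchanges of this shape only lower $\deg(s)$ when the added edge is \emph{not} incident to $s$, as in Claim \ref{claim15}.)

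The paper's proof shows what is actually needed. For $j\in\{1,2\}$ the conclusion is already contained in Claim \ref{claim19} (note $s\in P_T[t,s]$), whose proof uses the chord $v_1v_2\in E(G)$ and the two-edge exchange $T+su_j+v_1v_2-sv_1-sv_2$, which drops $\deg(s)$ to $3$ and lands in the Step 1 configuration, already refuted. For $j=3$ no analogous chord is available, and the argument is genuinely different: assuming $u_3s\in E(G)$, one first excludes $u_3s^{-}\in E(G)$ (via $T+u_3s^{-}-ss^{-}$, again reducing to Step 1), then applies claw-freeness to the triples $\{ss^{-},su_3,sv_1\}$ and $\{ss^{-},su_3,sv_2\}$ to obtain $s^{-}v_1,s^{-}v_2\in E(G)$, and finally considers $T+s^{-}v_1+s^{-}v_2-sv_1-sv_2$, which shifts the degree-$4$ branch vertex from $s$ to $s^{-}$ and so contradicts (C2) (or leaves only two branch vertices). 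In particular, contrary to your closing remark, the minimality condition (C2) and the claw-free hypothesis are indispensable precisely in a leaf case, namely $j=3$, and not only for the edge $st$.
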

\begin{proof}
By Claim \ref{claim19} we get $su_1; su_2\notin E(G)$.\\
If $su_i\in E(G)$ for some $i\in\{4;5;6;7\}$ then the tree $T'=T+su_i-ww^{+}$
is a spanning tree of $G$ with two branch vertices, this gives a contradiction. Hence $su_i\notin E(G)$ for all $4 \leq i \leq 7$.\\
Now, assume that $u_3s\in E(G)$. If $u_3s^{-}\in E(G)$ then we consider the tree $T'=T+u_3s^{-}-ss^{-}.$ Hence, using the same arguments as in Step 1 we give a contradiction. Therefore $u_3s^{-}\notin E(G)$. Thus, since $\{ss^{-}, su_3, sv_1\}$ is not claw, we have $s^{-}v_1\in E(G)$.\\
Repeating the same arguments we also have $s^{-}v_2\in E(G)$.\\
Now we consider the tree $T'=T+s^{-}v_1+s^{-}v_2-sv_1-sv_2.$ Then the resulting graph $T'$ has two branch vertices or this contradicts the condition (C2). Hence $u_3s\notin E(G)$.\\
Claim \ref{claim20} is completed.
\end{proof}
\begin{claim}\label{claim21} We have
\begin{equation}\label{eq24}
\sum\limits_{j=1}^7 |N_G(u_j)\cap \{w\}|\leq 1.
\end{equation}
\end{claim}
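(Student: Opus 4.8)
The goal of Claim~\ref{claim21} is to bound the total number of edges from the vertices of $I$ to the single vertex $w$, and the plan is to show that at most one $u_j$ can be adjacent to $w$. First I would dispose of the leaves $u_1,\dots,u_6$: by Claim~\ref{claim14} we already know $u_6 w\notin E(G)$, and Claim~\ref{claim19} handles $u_1,u_2$, since $w\in P_T[t,s]$ means $w$ lies on the path and the exchange argument there rules out $u_1 w,u_2 w\in E(G)$. For the remaining leaves I would run the standard edge-swap: if $u_i w\in E(G)$ for some $i$, form a tree $T'$ by adding the edge $u_i w$ and deleting a suitable tree-edge incident to $w$ (for instance $ww^{-}$ or $ww^{+}$, together with $v_4v_5$ from Claim~\ref{claim14} when $i\in\{4,5\}$ forces a rerouting around $t$), and check that $T'$ is a spanning tree with only two branch vertices, contradicting the standing assumption or condition (C2). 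This should eliminate all the leaf contributions.

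After the leaves, the only possible contribution to $N(u_j)\cap\{w\}$ comes from $u_7=t$, so the claim reduces to observing that $|N(t)\cap\{w\}|\le 1$, which is automatic because it is a single vertex. Concretely, I would argue that $\sum_{j=1}^{7}|N_G(u_j)\cap\{w\}| = |N(t)\cap\{w\}|\le 1$ once every other term is shown to vanish. This mirrors exactly the structure of inequality~(\ref{eq13}) in Step~1, where the analogous sum collapsed to $|N(s)\cap\{w\}|\le 1$.

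The main obstacle, and the only place requiring genuine care, is the case $u_i w\in E(G)$ for $i\in\{4,5\}$, where $u_i$ lives in a branch hanging off $t$ rather than off $w$. Here a naive single swap creates the wrong degree pattern at $t$, so I would use Claim~\ref{claim14} (the edge $v_4v_5\in E(G)$) to reattach $B_4$ and $B_5$ to each other while absorbing $u_i$ into $w$'s subtree, exactly as in the corresponding sub-argument of Claim~\ref{claim15}. The verification that the resulting $T'$ has exactly two branch vertices—so that $t$ ceases to be a branch vertex while $w$ does not gain a third—is the delicate bookkeeping step, but it is entirely parallel to the swaps already carried out, so no new idea is needed. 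Everything else is routine, and the claim follows by assembling these cases into the single displayed inequality~(\ref{eq24}).
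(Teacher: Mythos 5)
There is a genuine gap, and it sits exactly where you located the ``delicate bookkeeping'': your plan eliminates the wrong vertex. You propose to rule out $u_iw\in E(G)$ for \emph{all} leaves, including $u_3$, and to let $u_7=t$ be the sole survivor, so that the sum collapses to $|N(t)\cap\{w\}|\le 1$. But the swap you describe cannot work for $u_3$. The fundamental cycle of the edge $u_3w$ runs $w,w^{+},\dots,s,v_3,\dots,u_3,w$; the only tree-edge incident to $w$ on that cycle is $ww^{+}$ (deleting $ww^{-}$ or $wv_6$ does not even yield a tree, since those edges are not on the cycle). The exchange $T'=T+u_3w-ww^{+}$ produces a spanning tree that again has three branch vertices $s$ (degree $4$), $w$, $t$ with $w\in P_{T'}[t,s]$, so it does not contradict the standing assumption; and it does not necessarily violate (C2) either, because the new distance $d_{T'}(s,w)=|B_3|+1$ is incomparable with $r_2=d_T(s,w)$. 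Deleting $sv_3$ instead creates a tree whose degree-$4$ vertex lies in the \emph{middle} of the path between the two degree-$3$ branch vertices, i.e.\ a Step~3 configuration, which is not yet available inside Step~2. So $u_3w\in E(G)$ cannot be excluded by your method, and then your bound degrades to $|N(u_3)\cap\{w\}|+|N(t)\cap\{w\}|\le 2$, which is not inequality~(\ref{eq24}).

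The paper resolves this in precisely the opposite way: it \emph{keeps} $u_3$ as the one possible neighbour of $w$ and instead eliminates $t$ (together with $u_4,u_5$) using the edge $v_6w^{-}\in E(G)$ guaranteed by Claim~\ref{claim14}. Concretely, for $i\in\{4;5;7\}$, if $wu_i\in E(G)$ the tree $T'=T+wu_i+w^{-}v_6-ww^{-}-wv_6$ reattaches $B_6$ at $w^{-}$ and drops $\deg(w)$ to $2$, so $T'$ is a spanning tree whose only branch vertices are $s$ and $t$ --- a direct contradiction. The conclusion is then $\sum_{j=1}^{7}|N_G(u_j)\cap\{w\}|=|N(u_3)\cap\{w\}|\le 1$. (Your auxiliary swap with $v_4v_5$ for $i\in\{4;5\}$ does work, but it is the case $i=7$, which you dismissed as ``automatic,'' and the case $i=3$, which you wrongly claimed to kill, where the real content lies.) To repair your argument you would need to replace the attempted elimination of $u_3$ by the paper's elimination of $t$ via Claim~\ref{claim14}.
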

\begin{proof}
By Claim \ref{claim19} and Claim \ref{claim14} we have $wu_1; wu_2; wu_6\notin E(G)$.\\
Now for $i\in\{4;5;7\}$, then by Claim \ref{claim14} we may consider the tree $T'=T+wu_i+w^{-}v_6-ww^{-}-wv_6.$ Then $T'$  is a spanning tree of $G$ with two branch vertices, a contradiction. Then $wu_i\notin E(G)$. We thus give the following
\[\sum\limits_{j=1}^7 |N_G(u_j)\cap \{w\}|=|N(u_3)\cap \{w\}|\leq 1.\]
\end{proof}
Continuously, we will consider the set $N(I) \cap P = N(I) \cap (P_1\cup P_2)$.\\
By the condition (C2), we have.
 \begin{claim}\label{claim22}
$N(u_4)\cap P = N(u_5)\cap P=\emptyset$.
\end{claim}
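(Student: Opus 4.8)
The plan is to prove \Claim\ref{claim22} by the same exchange-argument template that underlies every previous claim, exploiting the minimality condition (C2) directly. The statement says that no leaf among $u_4,u_5$ has a neighbour on the internal path $P=P_1\cup P_2$ between the branch vertices. I would argue by contradiction, assuming there is some $x\in N(u_4)\cap P$ (the case $u_5$ being symmetric by the interchangeable roles of the two components attached at $t$).

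First I would recall the structure: $u_4$ sits in the component $B_4$ whose attaching vertex $v_4$ is adjacent to $t$, and $x$ lies strictly between $s$ and $t$ on $P_T[t,s]$. The key move is to form a modified spanning tree by adding the chord $xu_4$ and deleting an appropriately chosen tree edge so that the result is again a spanning tree but with a strictly smaller value of $(r_1;r_2)$. The natural edge to delete is $v_4v_4^{-}$, i.e. the first edge on the branch of $B_4$ leading to $t$, exactly as in the proof of \Claim\ref{claim6}. Adding $xu_4$ reconnects the component $B_4$ to the interior of the path while detaching it from $t$; since $x$ is interior to $P$, the new tree $T'=T+xu_4-v_4v_4^{-}$ should reduce the distance $r_1=d_T(s,t)$ or at least $r_2$, contradicting (C2).

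The main obstacle, and the step requiring the most care, is verifying that $T'$ is genuinely a spanning tree with the desired branch-vertex structure and that it truly violates (C2) rather than simply producing a tree with two branch vertices (which would still be a contradiction but via a different route). Specifically, I would need to check that deleting $v_4v_4^{-}$ does not accidentally lower the degree of $t$ below three in a way that changes the branch-vertex count favourably, and that attaching $B_4$ at the interior vertex $x$ does not create a new branch vertex of degree exceeding the allowed configuration. One must confirm that the path between $s$ and $t$ in $T'$ is shortened: since $x\in P$ is an interior vertex of the old $s$--$t$ path and $B_4$ now hangs off $x$ instead of off $t$, the leaf $u_4$ is relocated, and the three branch vertices rearrange so that either the new $r_1$ is smaller or $t$ ceases to be a branch vertex, giving a tree with only two branch vertices. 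Either outcome contradicts our standing hypotheses.

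Thus the whole argument reduces to a single tree-surgery plus a bookkeeping check on degrees at $s,w,t$ after the swap. Because (C2) is stated in lexicographic order on $(r_1;r_2)$, I would handle the two subcases $x\in P_1$ and $x\in P_2$ separately: if $x\in P_2$ (between $t$ and $w$), relocating $B_4$ to $x$ shortens the $t$-side of the path and directly decreases $r_1$; if $x\in P_1$ (between $w$ and $s$), I expect the surgery to force $t$ to lose its branch status, again producing a contradiction. In both subcases the verification is routine once the correct deleted edge is identified, so the only genuinely delicate point is ensuring the edge removal keeps the graph connected and acyclic, which follows because $v_4v_4^{-}$ is the unique edge joining the $B_4$-subtree to the rest of $T$.
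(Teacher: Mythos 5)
Your exchange is exactly the paper's proof: the paper disposes of this claim with the single line ``By the condition (C2), we have,'' meaning precisely the surgery $T'=T+xu_4-v_4v_4^{-}$ that you describe (note $v_4^{-}=t$, so the deleted edge is $tv_4$), in parallel with Claim \ref{claim6} of Step 1. Your handling of the subcase $x\in P_2$ is also correct: in $T'$ the branch vertices are $s$, $w$, $x$ with $w$ between $s$ and $x$, so $T'$ is again a tree of the Step 2 type whose first parameter is $d_{T'}(s,x)=d_T(s,x)<d_T(s,t)=r_1$, contradicting (C2).

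However, your subcase $x\in P_1$ is analyzed incorrectly, and it is exactly the point you flagged as the delicate one. Deleting $tv_4$ does drop $t$ to degree $2$, but adding $xu_4$ raises $x$ to degree $3$; hence $T'$ \emph{never} has ``only two branch vertices''---it always has exactly three, namely $s$ (degree $4$) and $w,x$ (degree $3$), with leaf set $\{u_1,u_2,u_3,v_4,u_5,u_6\}$. So the second horn of your dichotomy is vacuous, and for $x\in P_1$ you have not verified the first horn. It does hold: in that case the branch vertices of $T'$ lie in the order $s,\dots,x,\dots,w$ along the main path, so $T'$ is again a Step 2 configuration whose first parameter is $d_{T'}(s,w)=d_T(s,w)=r_2<r_1$, which is lexicographically smaller than $(r_1;r_2)$---the desired contradiction with (C2). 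Once this is corrected, both subcases close by the same mechanism (a strict decrease of the first coordinate), so the case split is not even needed; the one outcome your argument leans on, a tree with two branch vertices, is the one that can never occur here.
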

\begin{claim}\label{claim23} We have
\begin{equation}\label{eq25}
\sum\limits_{j=1}^7 |N_G(u_j)\cap P_1|\leq |P_1|.
\end{equation}
\end{claim}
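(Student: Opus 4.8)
The goal of Claim~\ref{claim23} is to bound the total number of neighbours that the seven vertices of $I$ can have on the path segment $P_1 = V(P_T[w,s]-\{w,s\})$, showing this does not exceed $|P_1|$. The plan is to first decompose the left side according to which $u_j$ contributes. By Claim~\ref{claim19} the leaves $u_1$ and $u_2$ have no neighbour on $P_T[t,s]$ at all, hence none on $P_1$; and by Claim~\ref{claim22} the vertices $u_4,u_5$ have no neighbour on $P=P_1\cup P_2$, so in particular none on $P_1$. This already eliminates four of the seven summands, leaving only the contributions of $u_3$, $u_6$, and $u_7=t$. Thus I would reduce the inequality to
\begin{equation*}
\sum\limits_{j=1}^7 |N_G(u_j)\cap P_1| = |N(u_3)\cap P_1| + |N(u_6)\cap P_1| + |N(t)\cap P_1|.
\end{equation*}

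Next I would try to show $N(u_6)\cap P_1 = \emptyset$, mirroring the argument in Claim~\ref{claim9} (the analogue from Step~1 for the branch $B_5$). Since $u_6$ hangs below $w$ and $P_1$ lies strictly between $w$ and $s$, any edge from $u_6$ to a vertex $x \in P_1$ should let me reroute: delete $wv_6$ (or an appropriate edge on the $w$--$u_6$ path) and add $xu_6$, producing a spanning tree that either violates the minimality condition (C2) or has only two branch vertices. This is the direct counterpart of the step ``$N(u_5)\cap P_1=\emptyset$'' inside the proof of Claim~\ref{claim9}, so I expect it to go through verbatim after renaming $u_5 \mapsto u_6$ and $v_5 \mapsto v_6$.

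With $u_6$ eliminated the inequality becomes $|N(u_3)\cap P_1| + |N(t)\cap P_1| \le |P_1|$, and the natural way to get this is to prove $N(u_3)\cap N(t)\cap P_1 = \emptyset$, exactly paralleling the claim $N(s)\cap N(t)\cap P_1=\emptyset$ from Claim~\ref{claim9} (here $u_3$ plays the role that $s$ did, since $u_3$ is the leaf whose branch $B_3$ attaches at $s$, and indeed Claim~\ref{claim20} forces $s$ itself to contribute nothing). For $x\in N(u_3)\cap N(t)\cap P_1$ I would examine the predecessor $x^{-}$: first rule out $tx^{-}\in E(G)$ by the swap $T'=T+xt+u_3x^{-}-xx^{-}-(\text{an edge})$, then use that $G$ is claw-free on $\{x,t,u_3,x^{-}\}$ to force $u_3x^{-}\in E(G)$, and finally derive a contradiction by a further exchange against (C2). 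Once $N(u_3)\cap N(t)\cap P_1=\emptyset$ is established, the two sets $N(u_3)\cap P_1$ and $N(t)\cap P_1$ are disjoint subsets of $P_1$, giving the bound.

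The main obstacle is the disjointness step $N(u_3)\cap N(t)\cap P_1=\emptyset$: unlike Step~1, where $s$ and $t$ were both degree-$3$ branch vertices symmetric across $w$, here $s$ has degree $4$ and carries three branches $B_1,B_2,B_3$, so the rerouting exchanges must be chosen to avoid creating a new branch vertex at $s$ or disturbing the extra leaf $u_3$. I would need to track carefully which edge at $s$ or $w$ is deleted in each exchange so that the resulting $T'$ genuinely has at most two branch vertices (or contradicts minimality), and to confirm that the claw-free forcing argument still applies with $u_3$ in place of the leaf $u_5$ used in Step~1. I expect, however, that the degree-$4$ structure at $s$ only affects the $B_i$ counting (already handled in (\ref{eq22})) and not the path argument, so the Step~1 template should transfer with only the bookkeeping changes indicated above.
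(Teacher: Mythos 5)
Your proposal follows the paper's proof skeleton exactly: the same reduction via Claims \ref{claim19} and \ref{claim22} to the three contributions of $u_3$, $u_6$ and $u_7=t$; the same elimination of $N(u_6)\cap P_1$ through a reroute violating (C2) (your exchange, deleting $wv_6$ and adding $xu_6$, moves the middle branch vertex from $w$ to $x$, decreasing $r_2$ while fixing $r_1$, which is precisely the justification the paper leaves implicit); and the same final target, $N(u_3)\cap N(t)\cap P_1=\emptyset$, with $u_3$ playing the role of $s$ from Claim \ref{claim9}.

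However, your execution of that last step has a genuine gap. The exchange you write to rule out $tx^{-}\in E(G)$, namely $T'=T+xt+u_3x^{-}-xx^{-}-(\text{an edge})$, never uses the edge $tx^{-}$, so it cannot derive a contradiction from that assumption; worse, it uses $u_3x^{-}$, which at that stage is not known to be an edge of $G$ --- it is only forced afterwards by the claw-free argument on $\{x;u_3,t,x^{-}\}$, and that argument itself needs $tx^{-}\notin E(G)$ (together with $u_3t\notin E(G)$ from Claim \ref{claim15}) as input. As written, the argument is circular. The correct first exchange is $T'=T+tx+tx^{-}-xx^{-}-ww^{-}$: deleting $xx^{-}$ and $ww^{-}$ splits $T$ into three components, both added edges land on $t$, and the resulting spanning tree has branch vertices only at $s$ (degree $4$) and $t$ (degree $5$), contradicting the standing assumption outright. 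This also settles the ``main obstacle'' you flag: in both exchanges the deleted path edge is $ww^{-}$ (erasing the branch vertex $w$), the vertex $s$ is never touched, and the closing exchange $T'=T+u_3x^{-}+xt-xx^{-}-ww^{-}$ likewise produces a tree whose only branch vertices are $s$ and $t$, each of degree $4$. Note that both contradictions are with the assumption that $G$ has no spanning tree with at most two branch vertices, not with the minimality condition (C2) as you suggest for the final step. With these corrections your plan coincides with the paper's proof.
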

\begin{proof}
By Claim \ref{claim19} and Claim \ref{claim22} we have $N(u_i)\cap P_1=\emptyset$ for all $i\in \{1;2; 4; 5\}$. By the condition (C2)  we get $N(u_6)\cap P_1=\emptyset$. Hence
\[\sum\limits_{j=1}^7 |N_G(u_j)\cap P_1|=|N(u_3)\cap P_1|+|N(u_7)\cap P_1|.\]
Now, suppose that there exists $x\in N(u_3)\cap N(u_7)\cap P_1$. \\
If $u_7x^{-}\in E(G)$ then the tree $T'=T+u_7x+u_7x^{-}-xx^{-}-ww^{-}$ is a spanning tree of $G$ with two branch vertices, which is a contradiction. Hence $u_7x^{-}\notin E(G)$. Then since $\{xu_3, xx^{-}, xu_7\}$ is not claw we have $u_3x^{-}\in E(G).$ We consider the tree $T'=T+u_3x^{-}+xu_7-xx^{-}-ww^{-}.$ So $T'$ is a spanning tree of $G$ with two branch vertices, a contradiction.
Therefore, $N(u_3)\cap N(u_7)\cap P_1=\emptyset.$ Hence we get  (\ref{eq25}).\\
Claim \ref{claim23} is proved.
\end{proof}
\begin{claim}\label{claim24} We have
\begin{equation}\label{eq26}
\sum\limits_{j=1}^7 |N_G(u_j)\cap P_2|\leq |P_2|+2.
\end{equation} 
\end{claim}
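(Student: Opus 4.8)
The plan is to throw away the terms that cannot contribute and then control the doubly‑covered points of $P_2$ by the same path‑exchange mechanism used throughout Step 2. First I would note that $P_2\subseteq P_T[t,s]$, so Claim \ref{claim19} gives $N(u_1)\cap P_2=N(u_2)\cap P_2=\emptyset$, and Claim \ref{claim22} gives $N(u_4)\cap P_2=N(u_5)\cap P_2=\emptyset$. Hence only $u_3,u_6$ and $u_7=t$ can meet $P_2$, and
\[
\sum_{j=1}^{7}|N_G(u_j)\cap P_2|=|N(u_3)\cap P_2|+|N(u_6)\cap P_2|+|N(t)\cap P_2|.
\]
Since $I$ is independent and $G$ is claw-free, Claim \ref{claim15} gives $N_3(I)=\emptyset$, so no vertex of $P_2$ is adjacent to all three of $u_3,u_6,t$. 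Writing the right-hand side as the size of the union (which is $\le|P_2|$) plus the number of points lying in exactly two of the three neighbourhoods, it suffices to prove
\[
|N(u_3)\cap N(u_6)\cap P_2|+|N(u_3)\cap N(t)\cap P_2|+|N(u_6)\cap N(t)\cap P_2|\le 2 .
\]

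I would get this from three bounds, the analogues of Claims \ref{claim11}--\ref{claim13}. For $N(u_6)\cap N(t)\cap P_2$ the proof of Claim \ref{claim11} transfers with $u_6$ in the role of the $w$-leaf: two such vertices $x,y$ (say $y\in P_T[t,x]$) force, via the non-claws at $x,y,t$, the edges $tx^{-}$, $yx^{-}$, $x^{-}y^{-}$, and then an exchange contradicts (C2); thus $|N(u_6)\cap N(t)\cap P_2|\le 1$. For $N(u_3)\cap N(t)\cap P_2$ a single vertex $x$ forces, since $\{xu_3,xt,xx^{+}\}$ is not a claw and $tu_3\notin E(G)$, either $tx^{+}\in E(G)$ or $u_3x^{+}\in E(G)$; in the first case the tree
\[
T'=T+u_3x+tx^{+}-xx^{+}-ww^{+}
\]
reconnects the three components of $T-\{xx^{+},ww^{+}\}$ and drops $\deg_{T'}(w)$ to $2$, leaving only the branch vertices $s,t$, a contradiction, so every such $x$ satisfies $u_3x^{+}\in E(G)$, after which a second vertex is excluded by the chaining argument of Claim \ref{claim11}; hence $|N(u_3)\cap N(t)\cap P_2|\le 1$. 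Finally, for $x\in N(u_3)\cap N(u_6)\cap P_2$ the non-claw $\{xu_3,xu_6,xx^{+}\}$ gives $u_6x^{+}\in E(G)$ or $u_3x^{+}\in E(G)$, and in either case a double bridge across $w$,
\[
T'=T+u_3x+u_6x^{+}-xx^{+}-ww^{-}\qquad(\text{resp. }T+u_6x+u_3x^{+}-xx^{+}-ww^{-}),
\]
reconnects $T-\{xx^{+},ww^{-}\}$, kills the branch vertex $w$, and keeps every internal vertex of degree two, so $N(u_3)\cap N(u_6)\cap P_2=\emptyset$. Summing gives $\le 1+1+0=2$, which is (\ref{eq26}).

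The hard part will be the bookkeeping inside these exchanges, and it is genuinely new compared with Step 1. There the vertex bridging the $s$-side was the branch vertex $s$ itself, whose degree could be raised freely; here that role falls to the \emph{leaf} $u_3$ attached to the degree-$4$ vertex $s$. Routing two new edges through $u_3$ would make $u_3$ a third branch vertex, so each exchange must send exactly one bridge through $u_3$ and take its second bridge from a branch vertex ($t$) or from the $w$-leaf $u_6$, while deleting a single $P_2$-edge $xx^{+}$ so that $x,x^{+}$ stay of degree two. Verifying that the chosen deletions ($xx^{+}$ together with $ww^{+}$ or $ww^{-}$) merge the three pieces into one spanning tree whose only branch vertices are $s$ and $t$ is precisely the delicate point; it is also what forces the weaker slack $+2$ here — two pairwise intersections survive at size $\le 1$ — in contrast to the clean bound of Step 1, where the lone surviving double point was absorbed by the extra vertex $w^{-}\notin N(I)$.
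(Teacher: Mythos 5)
Your reduction to $u_3,u_6,u_7=t$ and the counting (union plus pairwise intersections, bounded by $0+1+1$) is exactly the paper's framework, and your treatment of $N(u_6)\cap N(t)\cap P_2$ does transfer. But both of your other two bounds have genuine gaps, and both stem from the same source: you work with successors $x^{+}$, while the exchanges only close up when run with predecessors $x^{-}$. For $|N(u_3)\cap N(t)\cap P_2|\le 1$, your dichotomy ends with ``$u_3x^{+}\in E(G)$ for every such $x$,'' and you then invoke ``the chaining argument of Claim \ref{claim11}.'' That chaining cannot be fed by this hypothesis: it needs the edge $tx^{-}\in E(G)$, which comes from the non-claw $\{xt,xu_3,xx^{-}\}$ only after one rules out $u_3x^{-}\in E(G)$ by a separate exchange ($T'=T+xt+u_3x^{-}-xx^{-}-ww^{+}$, which the paper performs and you never do). With only edges emanating from $u_3$ available, any reconnection of the cut pieces must route two new edges through the leaf $u_3$, creating a third branch vertex --- precisely the obstruction your closing paragraph describes but does not overcome. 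Worse, even run with predecessors, the Claim \ref{claim11} chaining does not finish this pair: its final exchange must delete $sv_3$ (since $u_3$ hangs off $s$, not off $w$), and the resulting tree has the four degree-$3$ branch vertices $t,x^{-},w,s$, i.e.\ a Step 3 configuration, which is not a contradiction inside Step 2 (Step 3 is proved afterwards and relies on Steps 1 and 2). The paper's proof avoids both problems with an idea absent from your sketch: it sends \emph{two} bridges through the existing branch vertex $t$, namely $T'=T+u_3y+tx^{-}+tx-xx^{-}-yy^{-}-ww^{+}$, whose only branch vertices are $s$ and $t$ (of degrees $4$ and $5$).

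The second gap is in your claim $N(u_3)\cap N(u_6)\cap P_2=\emptyset$. Your exchange deletes $xx^{+}$ and $ww^{-}$, and these coincide when $x=w^{-}$ (then $x^{+}=w$): the proposed $T'$ then has $n$ edges and contains a cycle, so it is not a spanning tree. This boundary case is not vacuous --- by Claim \ref{claim14} we have $u_6w\notin E(G)$, so the non-claw at $x=w^{-}$ forces exactly $u_3w\in E(G)$, which is the failing branch of your case analysis (Claim \ref{claim21} does not exclude $u_3w$). The case is repairable, e.g.\ by deleting $ww^{+}$ instead: $T'=T+u_6w^{-}+u_3w-ww^{-}-ww^{+}$ is a spanning tree whose only branch vertices are $s$ and $t$. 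The paper sidesteps the issue entirely by using $x^{-}$: the deleted edges $xx^{-}$ and $ww^{\pm}$ can never coincide, and the other extreme case $x=t^{+}$ dies by the independence of $I$. So the skeleton of your argument is right, but as written the $(u_3,t)$ bound is unproved and the $(u_3,u_6)$ bound fails in a reachable case; the missing idea in both places is the paper's asymmetric routing --- one bridge through the leaf $u_3$, the remaining bridges through a vertex that is already a branch vertex.
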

\begin{proof}
By Claim \ref{claim19} and Claim \ref{claim22} we have the following
\begin{equation*}
\sum\limits_{j=1}^7 |N_G(u_j)\cap P_2|=|N(u_3)\cap P_2|+|N(u_6)\cap P_2|+|N(u_7)\cap P_2|.
\end{equation*}
Suppose that $x\in N(u_3)\cap N(u_6)\cap P_2$. If $u_6x^{-}\in E(G)$ then the tree $T'=T+u_6x^{-}+u_3x-ww^{+}-xx^{-}$ is a spanning tree of $G$ with exactly two branch vertices, a contradiction. Therefore $u_6x^{-}\notin E(G)$. Thus,  since $\{xu_3,xu_6,xx^{-}\}$ is not claw we obtain $u_3x^{-}\in E(G)$. Then $T'=T+u_6x+u_3x^{-}-xx^{-}-ww^{-}$ is a spanning tree of $G$ with exactly two branch vertices, a contradiction. We conclude that $N(u_3)\cap N(u_6)\cap P_2=\emptyset.$

Next, we prove $|N(u_3)\cap N(u_7)\cap P_2|\leq 1$. Suppose that there exist $x,y\in N(u_3)\cap N(u_7)\cap P_2$, $x\not= y$. Without loss of generality we may assume $y\in P_T[t,x]$.\\ 
If $u_3x^{-}\in E(G)$ then $T'=T+xu_7+u_3x^{-}-xx^{-}-ww^{+}$ is a spanning tree of $G$ with exactly two branch vertices, this is a contradiction. Hence $u_3x^{-}\notin E(G)$. Then since $\{xu_7, xu_3, xx^{-}\}$ is not claw we have $u_7x^{-}\in E(G)$. Now we consider the tree $T'=T+u_3y+u_7x^{-}+u_7x-xx^{-}-yy^{-}-ww^{+}$. Then the resulting graph $T'$ is a spanning tree of $G$ with exactly two branch vertices, a contradiction. So $|N(u_3)\cap N(u_7)\cap P_2|\leq 1$.

Last, we will prove $|N(u_6)\cap N(u_7)\cap P_2|\leq 1$. Suppose that there exist $x,y\in N(u_6)\cap N(u_7)\cap P_2$, $x\not= y$. Without loss of generality we may assume $y\in P_T[t,x]$. If $u_6x^{-}\in E(G)$ then $T'=T+xu_7+u_6x^{-}-xx^{-}-wv_6$ is a spanning tree of $G$ with exactly two branch vertices, a contradiction. Hence $u_6x^{-}\notin E(G)$. Then since $\{xu_7, xu_6, xx^{-}\}$ is not claw we have $u_7x^{-}\in E(G)$. Now we consider the tree $T'=T+u_6y+u_7x^{-}+u_7x-xx^{-}-yy^{-}-wv_6$. Then the resulting graph $T'$ is a spanning tree of $G$ with exactly two branch vertices, which is a contradiction. So $|N(u_6)\cap N(u_7)\cap P_2|\leq 1$.

Combining all above claims we complete Claim \ref{claim24}.
\end{proof}

Summing the inequalities (\ref{eq21})-(\ref{eq26}), it yields
\begin{align*}
|G|&=\sum\limits_{i=1}^6|B_i|+|P_T[t,s]|\\
&\geq (3+\sum\limits_{i=1}^6\sum\limits_{j=1}^7 |N_G(u_j)\cap B_i|)+3+|P_1|+|P_2|\\
&\geq 3+\sum\limits_{i=1}^6\sum\limits_{j=1}^7 |N_G(u_j)\cap B_i|+\sum\limits_{j=1}^7 |N_G(u_j)\cap \{s\}|+\sum\limits_{j=1}^7 |N_G(u_j)\cap \{w\}|\\
&+\sum\limits_{j=1}^7 |N_G(u_j)\cap P_1|+\sum\limits_{j=1}^7 |N_G(u_j)\cap P_2|\\
& \geq 3+\deg (I)\geq 3+\sigma_7(G)\geq 3+(|G|-2)=1+|G|.
\end{align*}
This is a contradiction. Step 2 is completed.

\vspace{0.5cm}
{\bf Step 3}. $T$ has two branch vertices $s$ and $t$ of degree 3 and two branchs which tough with $P_T[t,s]-\{t,s\}$ at $w$ and $z$. Without loss of generality we may assume $z\in P_T[t,w]$ (here $z$ can be $w,$ see figure 3).

\begin{figure}[h]
	\centering
	\includegraphics[width=1.0\linewidth]{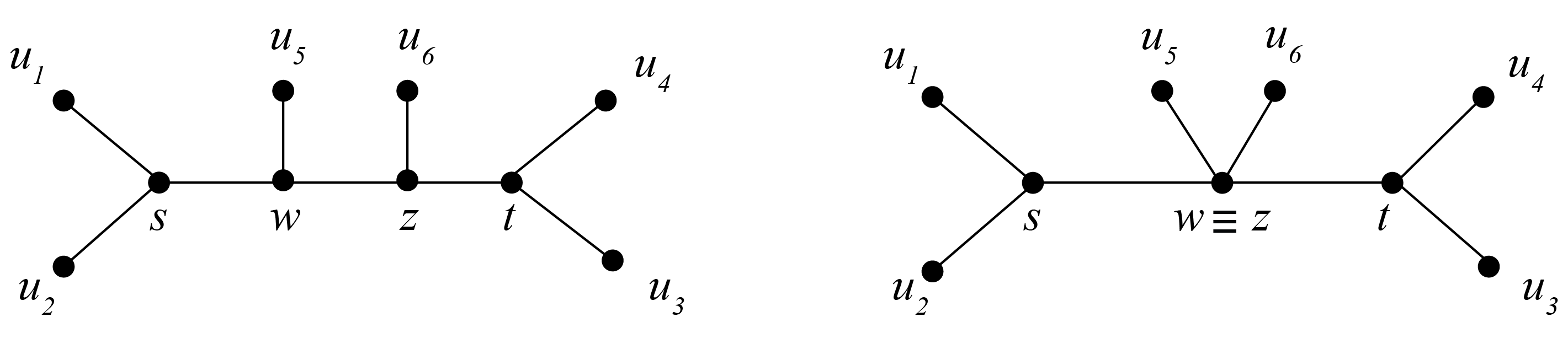}
	\caption[Tree T]{The tree $T$ is in Step 3}
	\label{Pic3}
\end{figure} 

Let $L(T)=\{u_1,u_2,u_3,u_4,u_5,u_6\}$ be the set of leaves of $T$.

Let $B_i$ be a vertex set of components of $T-\{s,w,z,t\}$ such that $L(T)\cap B_i=\{u_i\}$ for $1\leq i\leq 6$ and the only vertex of $N_T\{s,w,z,t\}\cap B_i$ is denoted by $v_i$. In this step, we may assume $B_i\cap N_T(s) \not= \emptyset\ (1\leq i \leq 2), B_j\cap N_T(t)\not= \emptyset\ (3\leq j \leq 4),$ $B_5\cap N_T(w)\not= \emptyset$ and $B_6\cap N_T(z)\not= \emptyset.$ Set $Q_1=V(P_T[w, s]-\{w,s\}), Q_2=V(P_T[z,w]-\{z,w\}),\ P_1 = Q_1 \cup Q_2,\ P_2=V(P_T[t, z]-\{t,z\})$ and $P=P_1\cup P_2$. Set $r_1=d_T(s,t), r_2=d_T(s,w),r_3=d_T(s,z) .$ For each $x\in P_T[t, s] $ or $ P_T[s, u_i], (1\leq i \leq 2)$ or $ P_T[t, u_j](3\leq j \leq 4)$ or $P_T[w, u_5]$ or $P_T[z, u_6]$, its successor $x^{+}$ and the predecessor $x^{-}$ are defined, if they exist. 

We choose the tree $T$ such that:\\
(C3) $(r_1;r_2; r_3)$ is as small as possible in lexicographic order.

Repeating the same arguments as in the proof of Claim \ref{claim1}, we have the following.
\begin{claim}\label{claim25a} We have\
$v_1s^{-}, v_2s^{-}, v_3t^{+}, v_4t^{+}\notin E(G)$ and $v_1v_2, v_3v_4\in E(G)$.
\end{claim}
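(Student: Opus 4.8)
The plan is to reproduce, essentially verbatim, the argument used for Claim~\ref{claim1}, since the local picture around $s$ (respectively $t$) in Step~3 is identical to the one in Step~1. Indeed, $s$ is a branch vertex of degree $3$ in $T$, and its three tree-neighbours are its predecessor $s^{-}$ on $P_T[t,s]$ together with the two attaching vertices $v_1,v_2$ of the branches $B_1,B_2$; symmetrically, the three tree-neighbours of $t$ are its successor $t^{+}$ and the attaching vertices $v_3,v_4$ of $B_3,B_4$.

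First I would establish the four non-adjacencies by edge-swapping. Suppose for contradiction that $v_1s^{-}\in E(G)$ and set
$$T'=T+v_1s^{-}-sv_1 .$$
This deletes the edge joining $s$ to the branch $B_1$ and reattaches $B_1$ at $s^{-}$; its only effect on the branch structure is to slide the branching at $s$ one step towards $t$ along $P_T[t,s]$. In the generic case $s^{-}\notin\{w,z\}$, the tree $T'$ is again of the type treated in Step~3 but with $r_1'=d_{T'}(s^{-},t)=r_1-1<r_1$, contradicting the minimality (C3). In the degenerate case $s^{-}\in\{w,z\}$, the swap merges two branch points, so $T'$ has at most three branch vertices; we then either fall back to a configuration already excluded in Steps~1--2 or obtain a spanning tree with at most two branch vertices, contradicting the standing assumption. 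Hence $v_1s^{-}\notin E(G)$, and the same argument (for $v_2$ at $s$, and for $v_3,v_4$ at $t$ using the successor $t^{+}$, where sliding $t$ towards $s$ again lowers $r_1$) yields $v_2s^{-},v_3t^{+},v_4t^{+}\notin E(G)$.

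It remains to produce the two edges, which is where claw-freeness enters. Since $\deg_T(s)=3$, the vertices $s^{-},v_1,v_2$ are exactly the three neighbours of $s$ in $T$, and they are pairwise distinct. As $G$ is claw-free, they cannot be mutually non-adjacent, for otherwise $\{s;s^{-},v_1,v_2\}$ would be an induced claw; having just shown $v_1s^{-},v_2s^{-}\notin E(G)$, we are forced to conclude $v_1v_2\in E(G)$. The identical argument applied to the three neighbours $t^{+},v_3,v_4$ of $t$ gives $v_3v_4\in E(G)$. The only delicate point — and the place where Step~3 differs cosmetically from Step~1 — is the bookkeeping of the degenerate overlaps $s^{-}\in\{w,z\}$ and $t^{+}\in\{w,z\}$: one must verify that in each such case the swap genuinely decreases the branch count (hence reduces to an earlier step) rather than recreating the same configuration. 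This is routine, since merging $s$ with an adjacent branch point can only lower $|B(T')|$, which is always favourable for the desired contradiction.
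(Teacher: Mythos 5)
Your proposal is correct and matches the paper's intent exactly: the paper proves Claim~\ref{claim25a} simply by stating that one repeats the argument of Claim~\ref{claim1}, which is precisely the edge-swap $T'=T+v_1s^{-}-sv_1$ (contradicting the lexicographic minimality or reducing to an earlier/forbidden configuration) followed by claw-freeness at $s$ and $t$ to force $v_1v_2,v_3v_4\in E(G)$. Your explicit bookkeeping of the degenerate cases $s^{-}=w\neq z$ (which yields a Step~2 configuration, already excluded) and $s^{-}=w=z$ (which yields a tree with only two branch vertices) is exactly the verification the paper leaves implicit.
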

\begin{claim}\label{claim25}
$N(u_i)\cap P=\emptyset$ for all $1\leq i\leq 4$.
\end{claim}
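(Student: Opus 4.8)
The plan is to reproduce the mechanism behind Claim \ref{claim6}. I would argue by contradiction: assume some leaf $u_i$ with $1\leq i\leq 4$ has a neighbour $y$ lying in the interior of the spine, $y\in N(u_i)\cap P$, and then re-route the branch $B_i$ through the edge $yu_i$ to obtain a spanning tree that is strictly better with respect to (C3). By the symmetry between $s$ and $t$ (interchanging $\{u_1,u_2\}$ with $\{u_3,u_4\}$ and $w$ with $z$) it suffices to treat $i\in\{1,2\}$, say $i=1$; the case $i\in\{3,4\}$ is identical after this interchange.

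First I would record that, with the chosen orientation, $v_1$ is the first vertex of $B_1$ after $s$, so its predecessor is $s$ and $v_1v_1^{-}=v_1s$. Given $y\in N(u_1)\cap P$, I form $T'=T+yu_1-v_1v_1^{-}=T+yu_1-v_1s$. The edge $yu_1$ is a non-tree edge whose fundamental cycle is the $T$-path $y\to\cdots\to s\to v_1\to\cdots\to u_1$ together with $yu_1$; since the branch $B_1$ meets the spine only at $s$, this cycle necessarily contains $v_1s$, so deleting $v_1s$ leaves a spanning tree and $T'$ is a spanning tree of $G$.

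Next I would track the degree changes. Only two degrees move: $\deg_{T'}(s)=2$, so $s$ leaves the branch set, and $\deg_{T'}(y)=3$, so the interior vertex $y$ joins it, while $v_1$ becomes a leaf and $u_1$ becomes internal. Hence $T'$ again has exactly four degree-$3$ branch vertices, namely $y,w,z,t$, and is of the same shape as the tree in Step 3. Its spine runs between $t$ and the branch vertex $a$ nearest the old $s$-end, and a short look at the three possible positions of $y$ gives $a=y$ when $y\in Q_1$ and $a=w$ when $y\in Q_2\cup P_2$. In every case $a$ lies strictly between $s$ and $t$ on the old spine, so the first parameter of $T'$ satisfies $r_1'=d_{T'}(a,t)=d_T(a,t)<d_T(s,t)=r_1$, whence $(r_1',r_2',r_3')<_{\mathrm{lex}}(r_1,r_2,r_3)$, contradicting (C3).

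The one delicate point is the branch-vertex bookkeeping in the previous paragraph: I must confirm that after deleting $v_1s$ the remaining branch $B_2$ and the old vertex $s$ merge into a single pendant leaf-path, so that $s$ genuinely drops out of the branch set and the two end branch vertices of $T'$ are correctly identified as $t$ and $a$ (in particular $T'$ still has precisely four branch vertices, even in the degenerate cases where $B_1$ or $B_2$ is a single vertex). Granting this, the strict decrease of $r_1$ is the desired contradiction; since the vertex $y\in N(u_i)\cap P$ was arbitrary, Claim \ref{claim25} follows.
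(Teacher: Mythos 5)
Your proposal is correct and takes essentially the same route as the paper: the paper's entire proof of this claim is the exchange $T'=T+xu_i-v_iv_i^{-}$ (which for $i\in\{1,2\}$ is exactly your $T+yu_1-v_1s$) followed by the observation that this contradicts the minimality condition (C3). Your extra bookkeeping --- that $T'$ is a spanning tree of the Step 3 shape whose four degree-3 branch vertices are $y,w,z,t$ and whose first parameter $r_1$ strictly decreases, including the degenerate cases --- simply makes explicit what the paper leaves implicit, and it is accurate.
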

\begin{proof}
Suppose that there exists $x\in N(u_i)\cap P$ for some $1\leq i\leq 4$. We consider the tree 
$ T'=	T+xu_i-v_iv_i^{-} $
 to give a contradiction with the condition (C3).
\end{proof}
\begin{claim}\label{claim26}
$N(u_i)\cap \{s,t,w,z\}=\emptyset$ for all $1\leq i\leq 4$ and $N(u_j)\cap \{s,t\}=\emptyset$ for all $j\in \{5;6\}$.
\end{claim}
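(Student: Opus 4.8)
The plan is to argue by contradiction in the exchange style already used throughout Step~3: assuming one of the forbidden edges $u_i x$ exists, with $x\in\{s,t,w,z\}$, I build a spanning tree $T'$ by adding $u_i x$ together with one of the ``short'' chords guaranteed by Claim~\ref{claim25a}, and deleting the tree edges thereby made redundant. I then read off that $T'$ either has at most two branch vertices, or falls into one of the configurations of Step~1 or Step~2 (both already shown impossible), or violates the minimality condition (C3). Throughout I use $v_1v_2,v_3v_4\in E(G)$ (Claim~\ref{claim25a}), $N(u_i)\cap P=\emptyset$ for $i\le4$ (Claim~\ref{claim25}), and claw-freeness. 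By the $s\leftrightarrow t$, $w\leftrightarrow z$ symmetry it suffices to treat one representative leaf from each orbit.

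First I would dispose of the two leaves $u_5,u_6$ hanging at the interior branch vertices. If $u_5 s\in E(G)$ I take $T'=T+u_5 s-wv_5$: deleting $wv_5$ drops $\deg_T(w)$ to $2$, so $w$ ceases to be a branch vertex, while $s$ becomes a degree-$4$ end vertex carrying three pendant paths. The outcome is a tree with exactly three branch vertices of degrees $4,3,3$ and the degree-$4$ vertex at an end, i.e. a Step-2 tree, a contradiction. The edges $u_5 t,u_6 s,u_6 t$ are handled identically after deleting $wv_5$ or $zv_6$, giving $N(u_j)\cap\{s,t\}=\emptyset$ for $j\in\{5,6\}$.

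Next I treat the four leaves $u_1,u_2,u_3,u_4$; it suffices to consider $u_1$. Using $v_1v_2\in E(G)$ I set $T'=T+u_1 x+v_1v_2-sv_1-sv_2$: deleting both edges at $s$ turns $s$ into a pendant leaf (so $s$ is no longer a branch vertex), while $v_1v_2$ keeps $B_1\cup B_2$ connected as a single pendant path re-attached at $x$. For $x=s$ this leaves three degree-$3$ branch vertices $w,z,t$ and only five leaves, a Step-1 tree. For $x=w$ or $x=t$, the vertex $x$ becomes an \emph{end} branch vertex of degree $4$ (since the deleted $s$ makes the direction toward $s$ pendant), giving a Step-2 tree. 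Each is a contradiction, so $u_1$ is non-adjacent to $s,w,t$.

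The remaining case $u_1 z$ (and symmetrically $u_2 z,u_3 w,u_4 w$), a leaf at one end attached to the interior branch vertex near the \emph{other} end, is where I expect the real difficulty. The exchange above re-attaches $B_1\cup B_2$ at $z$, but $z$ lies \emph{between} the surviving branch vertices $w$ and $t$, so it becomes a degree-$4$ \emph{median}; the resulting $3$-branch tree has degree sequence $4,3,3$ with the degree-$4$ vertex in the middle, a shape covered by neither Step~1 nor Step~2. Re-routing the spine through $B_1$ so as to bypass $w$ does produce a genuine Step-$3$ tree (path-topology, four degree-$3$ branch vertices, six leaves), but I cannot guarantee that its $(r_1,r_2,r_3)$ strictly decreases, so (C3) is not automatically violated. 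The way through should be to first invoke claw-freeness at $z$, whose neighborhood contains $\{z^{-},z^{+},v_6,u_1\}$ with no independent triple, to force an auxiliary edge: the triple $\{z^{-},v_6,u_1\}$ must contain an edge, and $u_1 z^{-}$ is excluded by Claim~\ref{claim25} unless $z^{-}=w$ (which returns us to the already-settled case $u_1 w$), leaving $z^{-}v_6\in E(G)$ or $u_1 v_6\in E(G)$. Such an extra edge should enable a compound exchange deleting the branch edges at \emph{both} $s$ and $w$ (or $z$), finally yielding a tree with at most two branch vertices. Verifying that one of the forced edges always supports a legitimate connected, acyclic exchange is the main obstacle I anticipate.
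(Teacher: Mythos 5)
Your handling of the edges into $\{s,t\}$ and of $u_1x$ for $x\in\{s,w,t\}$ is sound and in the same exchange style as the paper, but the case you yourself flag as ``the real difficulty'' --- $u_1z\in E(G)$ (equivalently $u_2z$, $u_3w$, $u_4w$) with $w\neq z$ --- is a genuine gap: you never derive a contradiction there, only an unverified sketch (``should enable a compound exchange\dots the main obstacle I anticipate''). The source of the trouble is your choice of exchange. Deleting \emph{both} edges $sv_1,sv_2$ forces you to re-attach $B_1\cup B_2$ as a single pendant path at $z$, which creates the $3$--$4$--$3$ tree with the degree-$4$ vertex in the middle; that shape is indeed covered by none of Step 1, Step 2, or condition (C3), and your re-routed alternative produces a Step-3 tree whose parameters $(r_1,r_2,r_3)$ you cannot compare with those of $T$. (A minor slip as well: $z^{-}$ lies on the $t$-side of $z$, so the exceptional case in your claw argument at $z$ is $z^{-}=t$, not $z^{-}=w$; it is harmless since $u_1t\notin E(G)$ is already settled, but it signals the directional confusion.)

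The paper closes this case with a one-edge exchange you did not consider: $T'=T+zu_1-sv_1$ (in the paper's labelling, $T'=T+wu_i-tv_i$ for $i\in\{3,4\}$). Deleting only \emph{one} of the two edges at $s$ turns $s$ into a degree-$2$ vertex, so the spine simply runs on through $s$ into $B_2$; the resulting tree has branch vertices $t$ and $w$, both of degree $3$, at its two ends, and a single interior vertex $z$ of degree $4$ carrying the two branches $B_6$ and the reversed $B_1$. This is exactly the Step-3 configuration in its degenerate form $w=z$ --- a shape Step 3 explicitly admits (``here $z$ can be $w$'') --- so the minimality condition (C3) applies to $T'$. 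Its first parameter is $r_1'=d_{T'}(t,w)=d_T(t,w)<d_T(t,s)=r_1$, since $w$ lies strictly inside $P_T[t,s]$; hence $(r_1',r_2',r_3')$ is lexicographically smaller than $(r_1,r_2,r_3)$, contradicting (C3). No claw-freeness and no compound exchange are needed: the missing idea is to keep the new tree \emph{inside} the Step-3 family (delete one edge at $s$, not two), so that (C3) can be invoked against it.
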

\begin{proof}
For each $i \in \{1,2,3,4\}$, if either $w \in N(u_i)$ or $z \in N(u_i)$, then without loss of generality, we assume that $wu_i \in E(G).$\\
If $w\not= z$ then we consider the tree $T'=T+wu_i-sv_i$ for the case $i\in \{1;2\}$ and $T'=T+wu_i-tv_i$ for the case $i\in \{3;4\}.$ The last case gives a contradiction with (C3) and with the first case we use the same arguments as in Step 2 to give a contradiction too.\\
If $w=z$ then the tree 
$T'=	T+wu_i-v_iv_i^{-} $
is a spanning tree of $G$ with two branch vertices, a contradiction.

Now, for each $1\leq i \leq 6$, if either $s \in N(u_i)$ or $t \in N(u_i),$ then without loss of generality, we assume that $su_i \in E(G).$ Since Claim \ref{claim25a}, we can set 
\begin{align*}
	T' = 
	\begin{cases}
		T+su_i+v_1v_2-sv_1-sv_2 & \text { if $i\in \{1;2\}$},\\
		T+su_i-ww^{+}& \text { if $i\in \{3;4;5;6\}$}.
	\end{cases}
\end{align*}
Then the resulting tree is a spanning tree of $G$ with two branch vertices if $w=z,$ a contradiction. Otherwise we use the similar arguments as in Step 1 or Step 2 to give a contradiction.\\
This completes Claim \ref{claim26}.
\end{proof}

Set $u_7=t$ and $I=\{u_1;u_2;u_3;u_4;u_5;u_6;u_7\}$.

\begin{claim}\label{claim27}
 $I$ is an independent set.
\end{claim}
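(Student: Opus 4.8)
The plan is to show that no two vertices of $I$ are adjacent, treating separately the $\binom{6}{2}$ pairs drawn from the leaves $u_1,\dots,u_6$ and the six pairs that contain $u_7=t$. For the pairs containing $t$ there is nothing left to do: Claim~\ref{claim26} already asserts $N(u_i)\cap\{s,t,w,z\}=\emptyset$ for $1\le i\le 4$ and $N(u_j)\cap\{s,t\}=\emptyset$ for $j\in\{5,6\}$, and in particular $tu_j=u_7u_j\notin E(G)$ for every $1\le j\le 6$. So the real content is the independence among $u_1,\dots,u_6$.

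For a pair $u_iu_j$ with $1\le i<j\le 6$ I would argue by contradiction, assuming $u_iu_j\in E(G)$, and perform the exchange $T'=T+u_iu_j-v_iv_i^{-}$, exactly as in the proofs of Claim~\ref{claim2} and Claim~\ref{claim15}. Here $v_i^{-}$ is the unique vertex of $\{s,w,z,t\}$ to which $B_i$ is attached (each $B_i$ is a path, so the predecessor of $v_i$ along the orientation is precisely that attachment vertex). Adding $u_iu_j$ splices the path $B_i$ onto $B_j$, and deleting $v_iv_i^{-}$ detaches $B_i$ from its branch vertex; since $u_i$ and $u_j$ only become degree-two vertices and the deleted edge only lowers the degree of one branch vertex, $T'$ is again a spanning tree of $G$ in which no new branch vertex appears.

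The decisive bookkeeping is the effect of the exchange on the four branch vertices. When $w\neq z$, deleting $v_iv_i^{-}$ drops the attachment vertex of $B_i$ to degree two, so $T'$ has exactly three branch vertices, each of degree three; inspecting where the pendant paths now lie shows that $T'$ is of the type treated in Step~1, which is impossible. When $w=z$ that vertex has degree four, so the same deletion either leaves it of degree three---again producing a Step~1 tree---or, when $B_i$ hangs from $s$ or $t$, destroys a branch vertex and leaves only two, contradicting the standing assumption outright. In every case $T'$ is a tree already excluded, so $u_iu_j\notin E(G)$.

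The only real obstacle is organizational: for each attachment pattern of $u_i$ and $u_j$ (both at $s$, both at $t$, one at each end, or one involving $w$ or $z$) one must confirm that the three surviving branch vertices genuinely form a Step~1 configuration rather than some new shape. The point that makes this work is that the old $v_i$, now a free leaf, together with the branch vertex nearest it still constitutes an end carrying two pendant paths; hence the reduced tree always has the two-ends-and-one-interior shape of Step~1 and never escapes the cases already handled.
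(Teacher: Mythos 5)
Your proposal is correct and takes essentially the same route as the paper: the paper also settles all pairs containing $u_7=t$ by Claim~\ref{claim26}, and for $u_iu_j$ with $1\le i\ne j\le 6$ performs the same edge exchange (the paper writes $T'=T+u_iu_j-v_jv_j^{-}$, the symmetric counterpart of your $T'=T+u_iu_j-v_iv_i^{-}$), concluding that $T'$ either has only two branch vertices or is a tree of a type already excluded in the earlier steps. Your explicit case analysis of $w=z$ versus $w\ne z$, showing that only a Step~1 configuration or a two-branch-vertex tree can arise, is simply a more detailed writing-out of the paper's terse appeal to ``the arguments as in the proofs of Step 1 and Step 2.''
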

\begin{proof}
	By Claim \ref{claim26} we have $u_iu_7 \not\in E(G)$ for all $i \in \{1;2;3;4;5;6\}.$ \\
If $u_iu_j\in E(G)$ where $1\leq i\not=j\leq 6$ then we consider the tree
	$	T' = T+u_iu_j-v_jv_j^{-}.$
Then we can use the arguments as in the proofs of Step 1 and Step 2 or $T'$ has two branch vertices to give a contradiction. This implies Claim \ref{claim27}.
\end{proof}
Using the similar arguments as in the proof of Claim \ref{claim4} we may obtain the following.
\begin{claim}\label{claim28}
For all $1\leq i\not= j\leq 6,$ if $x\in B_i\cap N(u_j)$ then $x\not= u_i, v_i$ and $x^{-}\notin N(I-\{u_j\}).$  \\
For all $i\in \{1;2;5;6\}, $ if $x\in B_i\cap N(u_7)$  then $x\not= u_i, v_i$ and $x^{-}\notin N(I-\{u_7\}).$ \\
For all $i\in \{3;4\},$ if $x\in B_i\cap N(u_7)$  then $x\not= u_i$ and $x^{-}\notin N(I-\{u_7\}).$ 
\end{claim}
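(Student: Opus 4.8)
The plan is to follow the proof of Claim~\ref{claim4} assertion by assertion, fixing a vertex $x\in B_i\cap N(u_j)$. That $x\neq u_i$ is immediate from the independence of $I$ (Claim~\ref{claim27}), since $x=u_i$ would make $u_iu_j$ an edge. The assertion $x\neq v_i$ in the listed cases is the Step~3 analogue of Claim~\ref{claim3}: if $v_i\in N(u_j)$ then $T'=T+v_iu_j-v_iv_i^{-}$ reattaches the whole subtree of $B_i$ to $G$ through $u_j$ while severing it from its branch vertex $v_i^{-}\in\{s,t,w,z\}$, which drops $\deg(v_i^{-})$ from $3$ to $2$. One then checks by a degree count that $T'$ falls into one of the already-excluded situations: if $u_j$ is a leaf the branch-vertex set shrinks to three vertices of degree $3$ (a Step~1 tree with five leaves), while if $u_j=t$ the vertex $t$ is raised to degree $4$ and one obtains a Step~2 tree. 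The case $i\in\{3,4\},\,j=7$ is deliberately omitted because $v_3t,v_4t\in E(T)$, so there $x\neq v_i$ would be false.

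The substance of the claim is $x^{-}\notin N(I-\{u_j\})$, which I would prove by contradiction exactly as in part~(e) of Claim~\ref{claim4}. Assume $u_kx^{-}\in E(G)$ with $u_k\in I$ and $k\neq j$. Deleting the tree edge $xx^{-}$ splits $B_i$ into a lower piece containing $u_i$ and $x$ and an upper piece containing $v_i$ and $x^{-}$; the new edges $u_jx$ and $u_kx^{-}$ reattach these two pieces through $u_j$ and $u_k$, and deleting the edge joining $v_i$ to its branch vertex (one of $sv_i$, $tv_i$, or the analogous edges at $w$ and $z$) lowers that branch vertex to degree $2$. As in Claim~\ref{claim4}, the branches $i\in\{5,6\}$ at $w,z$ are dispatched by a single such reattachment that kills $w$ or $z$, and by the symmetry of the $s$-side and the $t$-side the remaining cases reduce to $i\in\{1,2\}$; the resulting $T'$ then either has at most two branch vertices, or realises a configuration already ruled out in Steps~1--2, or contradicts the minimality (C3).

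The hard part will be the subcases in which $j=7$ or $k=7$, i.e.\ where an endpoint of one of the new edges is the branch vertex $t$ rather than a leaf, for then the naive two-edge swap raises $t$ back to degree $3$ or higher and the bookkeeping fails. Mirroring Subcases~2.1--2.2 of Claim~\ref{claim4}, I would either use the edge $v_1v_2$ (respectively $v_3v_4$) supplied by Claim~\ref{claim25a} to sever $s$ (respectively $t$) completely, via a swap of the shape $T'=T+u_jx+u_kx^{-}+v_1v_2-xx^{-}-sv_1-sv_2$, or feed the three neighbours of the relevant branch vertex into the claw-free hypothesis --- for $i\in\{3,4\},\,j=7$ the non-claw $\{tt^{+},tv_i,tx\}$ forces $xt^{+}\in E(G)$ or $xv_i\in E(G)$, and one then reroutes accordingly. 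The genuinely delicate point throughout is the degree bookkeeping: in each subcase one must confirm that the deleted edge really lies in $T$, that removing it drops a branch-vertex degree, and that none of the added edges pushes a vertex above the degree budget, so that the resulting tree is certifiably one of those excluded in Steps~1--2 or by (C3).
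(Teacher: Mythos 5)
Your proposal is correct and takes essentially the same route as the paper, whose entire proof of this claim is the remark that it follows by ``the similar arguments as in the proof of Claim~\ref{claim4}'': your expansion --- independence of $I$ for $x\neq u_i$, the one-edge swap $T+v_iu_j-v_iv_i^{-}$ for $x\neq v_i$, and the two-edge swap with the $v_1v_2$/$v_3v_4$ and claw-free fallbacks of Claim~\ref{claim25a} for $x^{-}\notin N(I-\{u_j\})$, each outcome being a tree with at most two branch vertices, a Step~1/Step~2 configuration, or a violation of (C3) --- is exactly that adaptation. One harmless slip: since $u_7=t$ while $s\notin I$, the two ends are \emph{not} symmetric here, so the delicate coincidences ($u_j$ or $u_k$ equal to the branch vertex adjacent to $B_i$) arise on the $t$-side, i.e.\ for $i\in\{3,4\}$ with $j=7$ or $k=7$, rather than reducing to $i\in\{1,2\}$; but your final paragraph treats precisely that case, so nothing is missing.
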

By Claim \ref{claim28}, for $i\in \{3;4\}$ we obtain
\begin{equation}\label{eq31}
\sum\limits_{j=1}^7 |N_G(u_j)\cap B_i|\leq |B_i|.
\end{equation}
Moreover, for $i\in \{1;2;5;6\}$ we have 
\begin{equation}\label{eq32}
\sum\limits_{j=1}^7 |N_G(u_j)\cap B_i|\leq |B_i|-1.
\end{equation}
{\bf Case 1.} $z=w.$
\begin{claim}\label{claim29}
	If $z=w$ then $v_5v_6\in E(G)$, $w\notin N(u_i)$ and $N(u_i)\cap P=\emptyset$ for all $5\leq i\leq 6$.
\end{claim}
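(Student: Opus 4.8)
The plan is to prove the three assertions in turn, using the $z=w$ assumption (so that $w$ is the unique extra branch vertex, now of degree $4$, with $v_5$ and $v_6$ both adjacent to $w$) together with the minimality condition (C3) and the fact that Step~1 and Step~2 have already been reduced to contradictions. Throughout I will exploit that in this case $P=Q_1\cup P_2$ consists of the interior of the main path $P_T[t,s]$.

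First I would establish $v_5v_6\in E(G)$. Since $w$ has degree $4$ with neighbours $v_5,v_6,w^{+},w^{-}$, the claw-free hypothesis applied to $\{w;v_5,v_6,w^{+}\}$ shows that at least one of $v_5v_6,\,v_5w^{+},\,v_6w^{+}$ is an edge, so it suffices to rule out $v_5w^{+}$ and $v_6w^{+}$. If $v_5w^{+}\in E(G)$, I would reroute $B_5$ by setting $T'=T+v_5w^{+}-wv_5$; this moves the branch point of $B_5$ from $w$ to $w^{+}$, which lies strictly closer to $s$. When $w^{+}\neq s$ the tree $T'$ is again a spanning tree of the Step~3 type but with $r_2$ strictly decreased, contradicting (C3); when $w^{+}=s$ the vertex $s$ acquires degree $4$ and $T'$ becomes a Step~2 tree, which is impossible. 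The same argument rules out $v_6w^{+}$, and hence $v_5v_6\in E(G)$.

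Next, using $v_5v_6\in E(G)$, I would collapse the two branches at $w$ to reduce any unwanted adjacency to an already-settled configuration. For $wu_i\in E(G)$ with $i\in\{5,6\}$ I consider $T'=T+wu_5+v_5v_6-wv_5-wv_6$ (and symmetrically for $u_6$): deleting $wv_5,wv_6$ and inserting $v_5v_6$ fuses $B_5$ and $B_6$ into a single path, while $wu_5$ reattaches that path to $w$, so $w$ drops to degree $3$ and $T'$ is a spanning tree with exactly three degree-$3$ branch vertices $s,w,t$ and five leaves, i.e.\ a Step~1 tree, a contradiction. Thus $w\notin N(u_5)\cup N(u_6)$. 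Finally, for $N(u_i)\cap P=\emptyset$, suppose $x\in N(u_5)\cap P$ (the case of $u_6$ is symmetric) and use the same fusion but reattach to the main path: set $T'=T+xu_5+v_5v_6-wv_5-wv_6$. Now $w$ loses both branches and drops to degree $2$, ceasing to be a branch vertex, while $x\in P_T[t,s]\setminus\{s,t\}$ becomes a degree-$3$ branch vertex carrying the merged path $x\!-\!u_5\!-\!\cdots\!-\!v_5\!-\!v_6\!-\!\cdots\!-\!u_6$; hence $T'$ is once more a Step~1 tree with branch vertices $s,t,x$ and five leaves, a contradiction.

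I expect the first assertion to be the main obstacle, because it requires care with the \emph{direction} of the branch-point move (only moving toward $s$ decreases $r_2$ and triggers (C3), whereas moving toward $t$ increases $r_3$ and is harmless) and with the boundary case $w^{+}=s$, which degenerates the configuration into Step~2 rather than Step~3. The remaining two assertions are routine once $v_5v_6\in E(G)$ is available, since each simply merges the double branch at $w$ and thereby collapses the extra structure back to the already-handled Step~1.
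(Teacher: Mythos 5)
Your proof is correct and follows essentially the same route as the paper: ruling out $v_5w^{+}$ and $v_6w^{+}$ via the exchange $T'=T+w^{+}v_i-wv_i$ against (C3), deducing $v_5v_6\in E(G)$ from the claw at $\{ww^{+},wv_5,wv_6\}$, and then using $T'=T+u_iw+v_5v_6-wv_5-wv_6$ (resp.\ $T'=T+xu_i+v_5v_6-wv_5-wv_6$) to fall back to the already-contradicted Step~1 configuration. If anything, you are slightly more careful than the paper in explicitly treating the boundary case $w^{+}=s$, where the exchange produces a Step~2 tree rather than a (C3)-violating Step~3 tree.
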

\begin{proof}
	If $w^{+}v_5\in E(G)$ the we consider the tree $T'=T+w^{+}v_5-wv_5.$ This contradicts the condition (C3). Hence $w^{+}v_5\notin E(G)$. Similarly, we also get $w^{+}v_6\notin E(G)$. Then since $\{ww^{+}, wv_5, wv_6\}$ is not claw we obtain $v_5v_6\in E(G)$ .\\
	For  $i\in \{5,6\},$ if $wu_i\in E(G)$ then we come back Step 1 with the tree $T'=T+u_iw+v_5v_6-wv_5-wv_6,$ this gives a contradiction. Hence $wu_5, wu_6\notin E(G)$. \\
	Now if there exists $x\in N(u_i)\cap P$ for some $i\in \{5,6\}$. Then we also come back Step 1 with the tree $T'=T+xu_i+v_5v_6-wv_5-wv_6.$ This gives a contradiction.\\
	Claim \ref{claim29} is proved.
\end{proof}
By Claims \ref{claim26} and \ref{claim29} we have
\begin{equation}\label{eq33}
\sum\limits_{j=1}^7 |N_G(u_j)\cap \{s,w\}|=|N(u_7)\cap \{s,w\}|= |N(t)\cap \{s,w\}|\leq 2.
\end{equation}
By Claims \ref{claim25}, \ref{claim29} we also have
\begin{equation}\label{eq34}
\sum\limits_{j=1}^7 |N_G(u_j)\cap (P_1\cup P_2)|= |N(t)\cap (P_1\cup P_2)|\leq |P_1\cup P_2|=|P_1|+|P_2|.
\end{equation}
By (\ref{eq31})-(\ref{eq34}) we obtain
\begin{align*}
|G|&=\sum\limits_{i=1}^6|B_i|+|P_T[s,t]|\\
&\geq (4+\sum\limits_{i=1}^6\sum\limits_{j=1}^7 |N_G(u_j)\cap B_i|)+3+|P_1|+|P_2|\\
&\geq 5+\sum\limits_{i=1}^6\sum\limits_{j=1}^7 |N_G(u_j)\cap B_i|+\sum\limits_{j=1}^7|N(u_j)\cap \{s,w\}|+ \sum\limits_{j=1}^7 |N_G(u_j)\cap (P_1\cup P_2)|\\
& \geq 5+\deg (I)\geq 5+\sigma_7(G)\geq 5+(|G|-2)=3+|G|.
\end{align*}
This gives a contradiction.

{\bf Case 2.} $w\not= z$.
\begin{claim}\label{claim30}
If $z\not=w$ then $w^{+}v_5, z^{+}v_6, u_5w^{-}, u_5w, u_6z^{-}, u_6z, u_6w\notin E(G)$ and $v_5w^{-}, v_6z^{-}\in E(G)$.
\end{claim}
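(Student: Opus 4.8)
The plan is to prove Claim~\ref{claim30} by the same family of local exchange (edge-swap) arguments used in Claims~\ref{claim8} and \ref{claim14}, now exploiting the three-coordinate minimality in condition (C3) together with claw-freeness at the two interior attachment vertices $w$ and $z$. Since $w$ and $z$ play symmetric roles (with $w$ nearer $s$ and $z$ nearer $t$), I would prove the assertions about $w^{+}v_5$, $v_5w^{-}$, $u_5w^{-}$, $u_5w$ in detail and obtain the corresponding statements $z^{+}v_6$, $v_6z^{-}$, $u_6z^{-}$, $u_6z$ by the evident symmetry, handling the cross term $u_6w$ separately.

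First I would dispose of the negative edges at $w$. If $w^{+}v_5\in E(G)$, the tree $T'=T+w^{+}v_5-wv_5$ relocates the branch $B_5$ from $w$ to its successor $w^{+}$, so $w$ drops to degree $2$ and $w^{+}$ becomes a branch vertex strictly closer to $s$; this decreases $r_2=d_T(s,w)$ while leaving $r_1,r_3$ fixed, contradicting (C3) (and in the degenerate case $w^{+}=s$ the vertex $s$ reaches degree $4$, so $T'$ is a Step~2 tree, already excluded). The same swap with $w^{+}w^{-}$ in place of $w^{+}v_5$ shows $w^{+}w^{-}\notin E(G)$. Since $ww^{+},ww^{-},wv_5$ are three tree edges and $G$ is claw-free, ruling out the edges $w^{+}v_5$ and $w^{+}w^{-}$ forces the remaining edge $v_5w^{-}\in E(G)$.

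Next come the edges meeting the leaf $u_5$. For $u_5w^{-}\notin E(G)$, the tree $T'=T+u_5w^{-}-ww^{-}$ absorbs $B_5$ into the $s$--$t$ path and makes $w$ a degree-$2$ vertex, leaving exactly the three degree-$3$ branch vertices $s,z,t$ with $z$ interior; that is precisely a Step~1 tree, which is impossible. For $u_5w\notin E(G)$ I would use the edge $v_5w^{-}$ just obtained: the tree $T'=T+u_5w+v_5w^{-}-v_5w-ww^{-}$ reattaches $B_5$ to $w$ through $u_5$ and splices its other end onto $w^{-}$, again deleting $w$ as a branch vertex and producing a Step~1 tree. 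Running the symmetric swaps at $z$ (relocating $B_6$ toward $z^{+}$ to cut $r_3$, then applying claw-freeness to $zz^{+},zz^{-},zv_6$, then the $u_6$-exchanges that reduce to Step~1) yields $z^{+}v_6,u_6z^{-},u_6z\notin E(G)$ and $v_6z^{-}\in E(G)$. Finally, for the cross term: if $u_6w\in E(G)$ then $T'=T+u_6w-v_6z$ moves $B_6$ off $z$ and onto $w$, so $z$ becomes degree $2$ while $w$ rises to degree $4$, leaving the branch vertices $s,t$ of degree $3$ and $w$ of degree $4$ --- a Step~2 tree, again excluded.

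The routine content, which I would verify once and invoke thereafter, is that each $T'$ is genuinely a spanning tree (two tree edges removed, two added, connectivity restored through the relocated branch). The part I expect to be most delicate is the bookkeeping of degenerate configurations --- when a path segment $Q_1$, $Q_2$, or $P_2$ is empty (so that $w^{+}=s$, $z^{+}=w$, or $w^{-}=z$), or when some $B_i$ is a single vertex (so $u_i=v_i$) --- since in these cases a swap may merge two branch vertices, and the resulting tree must then be correctly recognized as either having at most two branch vertices, or as contradicting (C3), or as one of the already-excluded Step~1 and Step~2 configurations, rather than remaining in the present Step~3 case.
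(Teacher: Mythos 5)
Your proposal follows the paper's own proof almost step for step: the relocation swaps $T+w^{+}v_5-wv_5$ and $T+w^{+}w^{-}-ww^{-}$ (and their mirror images at $z$) violating (C3), the claw $\{ww^{+},ww^{-},wv_5\}$ forcing $v_5w^{-}\in E(G)$, and the exchanges $T+u_5w^{-}-ww^{-}$ and $T+u_5w+v_5w^{-}-wv_5-ww^{-}$ reducing to the already-excluded Step 1 configuration are exactly the arguments the paper uses; your explicit attention to degenerate cases (such as $w^{+}=s$ producing a genuine Step 2 tree, or singleton $B_i$) is sound and in fact more careful than the paper's writeup.

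However, your treatment of the cross term $u_6w$ contains a genuine error. The tree $T'=T+u_6w-v_6z$ does have branch vertices $s,t$ of degree $3$ and $w$ of degree $4$, but it is \emph{not} a Step 2 tree: in Step 2 the degree-$4$ vertex is an endpoint of the central path carrying three pendant branches ($B_1,B_2,B_3$ all attached at $s$), whereas in your $T'$ the degree-$4$ vertex $w$ is interior to $P_{T'}[t,s]$ and carries only two pendant branches ($B_5$ via $v_5$ and $B_6$ via $u_6$). That shape is precisely the configuration of Step 3, Case 1 (the case $z=w$), so ``a Step 2 tree, already excluded'' does not apply. Nor may you instead appeal to ``Case 1 already excluded,'' because the Case 1 contradiction was derived only for a tree satisfying the minimality condition (C3), and $T'$ is not known a priori to be such a tree. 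The correct and immediate justification --- the one the paper intends when it writes that $u_6w\notin E(G)$ follows ``by the condition (C3)'' --- is that $T'$ is again a Step 3 tree in which both branches attach at $w$, so its parameter triple is $(r_1,r_2,r_2)$; since $z\neq w$ lies on $P_T[t,w]$ we have $r_2<r_3$, hence $(r_1,r_2,r_2)$ is lexicographically smaller than $(r_1,r_2,r_3)$, contradicting the choice of $T$. With that one-line replacement your argument is complete and coincides with the paper's proof.
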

\begin{proof}
By the condition (C3), we get $w^{+}v_5, z^{+}v_6, u_6w, w^{+}w^{-}, z^{+}z^{-}\notin E(G)$.\\
Then, since $G$ is claw-free we get $v_5w^{-}\in E(G)$ and $v_6z^{-}\in E(G)$.\\
Now, if $u_5w^{-}\in E(G)$ (or $u_6z^{-}\in E(G)$) then we may use the proof of Step 1 with the tree $T'=T+u_5w^{-}-ww^{-} ( \text{or}\ T'=T+u_6z^{-}-zz^{-}  \text{, respectively})$ to get a contradiction.\\
If $u_5w\in E(G)$ then it gives a contradiction by Step 1 when using the tree $T'=T+u_5w+v_5w^{-}-wv_5-ww^{-}.$ Hence $u_5w\notin E(G)$.\\
Similarly, we also get $u_6z\notin E(G)$. Claim \ref{claim30} is proved.
\end{proof}
\begin{claim}\label{claim31} When $z\not= w$, we have
\begin{equation}\label{eq41}
\sum\limits_{j=1}^7 |N_G(u_j)\cap \{s,w,z\}|\leq 4.
\end{equation}
\end{claim}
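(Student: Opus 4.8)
The plan is to prove (\ref{eq41}) by bounding the contribution of each vertex of $I=\{u_1,\dots,u_6,u_7=t\}$ to the three-element set $\{s,w,z\}$ separately, using the adjacency restrictions already assembled in Claims \ref{claim26} and \ref{claim30}. First I would dispose of the four leaves $u_1,u_2,u_3,u_4$: Claim \ref{claim26} asserts $N(u_i)\cap\{s,t,w,z\}=\emptyset$ for $1\le i\le 4$, so in particular $|N_G(u_i)\cap\{s,w,z\}|=0$ for each of these indices, and they contribute nothing to the sum.

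Next I would treat the two leaves $u_5,u_6$ attached at $w$ and $z$, which is where the only genuine work lies. For $u_5$, Claim \ref{claim26} gives $u_5s\notin E(G)$ (since $5\in\{5;6\}$) and Claim \ref{claim30} gives $u_5w\notin E(G)$, so the single edge that is not excluded is $u_5z$; hence $|N_G(u_5)\cap\{s,w,z\}|\le 1$. For $u_6$, Claim \ref{claim26} gives $u_6s\notin E(G)$, while Claim \ref{claim30} supplies both $u_6w\notin E(G)$ and $u_6z\notin E(G)$, so $|N_G(u_6)\cap\{s,w,z\}|=0$.

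Finally, for $u_7=t$ I would only invoke the trivial bound $|N_G(t)\cap\{s,w,z\}|\le 3$, valid because $\{s,w,z\}$ has exactly three elements; no tree modification is needed here, since the estimate $4$ already leaves room for all three potential edges $ts,tw,tz$. Summing the five contributions then yields
\[
\sum_{j=1}^7 |N_G(u_j)\cap\{s,w,z\}| \le 0+1+0+3 = 4,
\]
which is precisely (\ref{eq41}).

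I do not expect a real obstacle in this claim: it is a per-vertex accounting argument whose entire force comes from the earlier claims. The only point requiring care is to confirm that every leaf adjacency to $\{s,w,z\}$ except the lone surviving edge $u_5z$ has in fact been eliminated in advance — that is, that Claim \ref{claim30} (the Case $2$ analogue of Claim \ref{claim29}) genuinely rules out $u_5w$, $u_6w$ and $u_6z$, and Claim \ref{claim26} rules out the edges to $s$. Once those exclusions are in hand, the bound $4$ is immediate, and the surviving term $u_5z$ together with the three possible edges from $t$ accounts exactly for it.
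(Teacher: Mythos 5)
Your proposal is correct and is essentially identical to the paper's own proof: the paper likewise uses Claims \ref{claim26} and \ref{claim30} to reduce the sum to $|N(t)\cap\{s,w,z\}|+|N(u_5)\cap\{z\}|\leq 4$, which is exactly your per-vertex accounting (zero for $u_1,\dots,u_4$ and $u_6$, at most one for $u_5$ via the possible edge $u_5z$, and the trivial bound of three for $t$). Your write-up merely makes explicit the bookkeeping that the paper states in one line.
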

\begin{proof}
By Claim \ref{claim26} and Claim \ref{claim30} we have the following
\[\sum\limits_{j=1}^7 |N_G(u_j)\cap \{s,w,z\}|=|N(t)\cap\{s,w,z\}|+|N(u_5)\cap \{z\}|\leq 4.\]
\end{proof}
\begin{claim}\label{claim32} When $z\not= w$, we have
\begin{equation}\label{eq42}
\sum\limits_{j=1}^7 |N_G(u_j)\cap Q_1|\leq |Q_1|.
\end{equation}
\end{claim}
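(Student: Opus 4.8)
The plan is to show that, exactly as in Claim~\ref{claim9} of Step~1, almost all of the seven terms in the sum vanish, leaving only one term that is bounded trivially. First I would split
$$\sum_{j=1}^7 |N_G(u_j)\cap Q_1| = \sum_{i=1}^4 |N(u_i)\cap Q_1| + |N(u_5)\cap Q_1| + |N(u_6)\cap Q_1| + |N(t)\cap Q_1|,$$
recalling that $u_7=t$. Since $Q_1\subseteq P$, Claim~\ref{claim25} immediately gives $N(u_i)\cap Q_1=\emptyset$ for $1\le i\le 4$, so those four terms drop out at once.

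The heart of the argument is to prove $N(u_5)\cap Q_1 = N(u_6)\cap Q_1 = \emptyset$ by the exchange technique governed by (C3). For $u_5$: if $x\in N(u_5)\cap Q_1$, I would form $T'=T+xu_5-v_5w$, which detaches the $u_5$-branch from $w$ and reattaches it (now rooted at $u_5$, with $v_5$ becoming the new leaf) to the spine vertex $x$. The spine $P_T[s,t]$ is untouched, so $r_1$ is unchanged, while the attachment point of this branch moves from $w$ to $x\in Q_1$, strictly closer to $s$; after identifying the two interior branch vertices of $T'$ this strictly decreases the second coordinate $r_2=d_T(s,w)$, contradicting (C3). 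For $u_6$: if $x\in N(u_6)\cap Q_1$, I would form $T'=T+xu_6-v_6z$, moving the $u_6$-branch from $z$ to $x$. Here $x$ lies between $w$ and $s$, so in $T'$ the two interior branch vertices become $w$ and $x$ with $x$ closer to $s$; relabelling $x$ as the new ``$w$'' and the old $w$ as the new ``$z$'' keeps $r_1$ fixed but makes the new $r_2=d_T(s,x)<d_T(s,w)$, again violating (C3).

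With $u_1,\dots,u_6$ eliminated, the sum collapses to $|N(t)\cap Q_1|$, and since $N(t)\cap Q_1\subseteq Q_1$ this is at most $|Q_1|$, which is exactly (\ref{eq42}). The one point demanding care --- and the only genuine obstacle --- is checking that each exchanged tree $T'$ is still a legitimate Step~3 configuration (two degree-three branch vertices $s,t$ together with two distinct interior spine vertices each carrying one pendant branch) so that (C3) applies, together with the correct relabelling of the two interior attachment points before the lexicographic comparison of $(r_1;r_2;r_3)$. The $u_6$ move is the delicate case, since the branch jumps past $w$ and the roles of the two interior vertices must be swapped; one should verify that $x$ is distinct from each of $s,t,w,z$ (it is, because $x$ lies strictly on the $s$-side of $w$) so that $T'$ indeed retains four distinct branch vertices.
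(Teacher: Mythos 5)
Your proof is correct and takes essentially the same route as the paper: Claim \ref{claim25} eliminates the terms for $u_1,\dots,u_4$, the minimality condition (C3) --- realized by exactly the exchange trees $T'=T+xu_5-v_5w$ and $T'=T+xu_6-v_6z$ you describe, which keep $r_1$ fixed and strictly decrease $r_2$ after relabelling the interior attachment points --- gives $N(u_5)\cap Q_1=N(u_6)\cap Q_1=\emptyset$, and the surviving term $|N(t)\cap Q_1|$ is bounded by $|Q_1|$ trivially. The paper states the (C3) step without writing out these exchanges, so your version merely supplies the details it leaves implicit.
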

\begin{proof}
By the condition (C3) we have $N(u_5)\cap Q_1=N(u_6)\cap Q_1=\emptyset$. Combining with Claim \ref{claim25} we get
\[\sum\limits_{j=1}^7 |N_G(u_j)\cap Q_1|=|N(t)\cap Q_1|\leq |Q_1|.\]
\end{proof}
\begin{claim}\label{claim33} When $z\not= w$, we have
\begin{equation}\label{eq43}
\sum\limits_{j=1}^7 |N_G(u_j)\cap Q_2|\leq |Q_2|.
\end{equation}
\end{claim}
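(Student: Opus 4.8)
The plan is to show that inside $Q_2$ essentially only the leaf $u_5$ and the branch vertex $u_7=t$ can have neighbours, and that these two neighbourhoods are disjoint. First, since $Q_2\subseteq P$, Claim~\ref{claim25} gives $N(u_i)\cap Q_2=\emptyset$ for all $1\le i\le 4$. Next I would eliminate $u_6$: if some $x\in N(u_6)\cap Q_2$ existed, then relocating the branch $B_6$ from its attachment point $z$ to $x$ via $T'=T+xu_6-v_6v_6^{-}$ (note $v_6^{-}=z$) keeps $s,w,t$ and all degrees as they are, but moves the fourth touch vertex from $z$ onto $x$. Every vertex of $Q_2$ lies strictly between $w$ and $z$ on $P_T[t,s]$, hence is closer to $s$ than $z$; so the new parameters are $(r_1,r_2,d_T(s,x))$ with $d_T(s,x)<r_3$, contradicting the lexicographic minimality~(C3). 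Thus $N(u_6)\cap Q_2=\emptyset$, and the sum collapses to $|N(u_5)\cap Q_2|+|N(t)\cap Q_2|$.

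It therefore suffices to prove the disjointness $N(u_5)\cap N(t)\cap Q_2=\emptyset$, for then every vertex of $Q_2$ is counted at most once and the bound $\sum_{j=1}^7|N(u_j)\cap Q_2|\le|Q_2|$ follows at once. I would emphasise that, unlike the $Q_1$ case of Claim~\ref{claim32}, one cannot dispose of $N(u_5)\cap Q_2$ outright by (C3): relocating $B_5$ from $w$ to any vertex of $Q_2$ pushes the touch point away from $s$ and \emph{increases} $r_2$, so condition~(C3) yields nothing there. Consequently the whole claim hinges on the interaction of $u_5$ and $t$ on $Q_2$.

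The hard part will be this disjointness, which I would attack in the style of Claims~\ref{claim11}, \ref{claim23} and~\ref{claim24}. Suppose $x\in N(u_5)\cap N(t)\cap Q_2$; since $I$ is independent, $u_5t\notin E(G)$, so applying claw-freeness to $\{xt,xu_5,xx^{-}\}$ (with $x^{-}$ the predecessor of $x$ towards $t$) forces $tx^{-}\in E(G)$ or $u_5x^{-}\in E(G)$. I would first rule out $u_5x^{-}$ by a reroute of the shape $T'=T+xt+u_5x^{-}-xx^{-}-wv_5$, exactly the opening move of Claim~\ref{claim11}, which either produces a spanning tree with at most two branch vertices or a Step~3 tree violating~(C3). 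One is then left with the forced chord $tx^{-}$, and iterating the claw/reroute argument toward $z$ — using $v_3v_4$ (Claim~\ref{claim25a}) to free a degree at $t$ and $v_5w^{-}$ (Claim~\ref{claim30}) to reattach $B_5$ — should let me shortcut the spine through the long chord $xt$ and either build a tree with strictly smaller $(r_1,r_2,r_3)$ or collapse the picture to one of the configurations of Steps~1 and~2, each already shown impossible. The genuinely delicate point is that $t$ is separated from $Q_2$ by $z$ and the entire segment $P_2$, so in shortcutting through $xt$ the skipped segment still carries the branch $B_6$ at $z$; reabsorbing it without creating a new branch vertex or an extra leaf is where the real bookkeeping lies.
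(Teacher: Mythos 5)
Your opening reductions are sound and agree with the paper: Claim~\ref{claim25} disposes of $u_1,\dots,u_4$, and your relocation $T'=T+xu_6-zv_6$ correctly forces $N(u_6)\cap Q_2=\emptyset$, since it preserves $r_1,r_2$ and strictly decreases $r_3$, contradicting (C3). The problems start after the sum collapses to $|N(u_5)\cap Q_2|+|N(t)\cap Q_2|$. First, the one concrete reroute you offer for the case $u_5x^{-}\in E(G)$, namely $T'=T+xt+u_5x^{-}-xx^{-}-wv_5$, does not deliver what you claim: in $T'$ the branch vertices are $s$, $z$ (degree 3) and $t$ (degree 4), with $t$ lying \emph{in the middle} of $P_{T'}[s,z]$, so $T'$ is again a tree of Step 3, Case 1 type rather than a Step 1 or Step 2 tree, and its parameter vector need not be lexicographically smaller: one computes $r_1'=r_1+1-d_T(x,z)$, so when $x=z^{+}$ the first coordinate is unchanged and the second becomes $\min\bigl(r_3,\,r_1-r_3\bigr)$, which is not below $r_2=d_T(s,w)$ whenever $d_T(z,t)\ge d_T(s,w)$. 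The paper's corresponding move deletes $zz^{-}$ instead of $wv_5$ (that is, $T'=T+xt+u_5x^{-}-xx^{-}-zz^{-}$); this turns $z^{-}$ into a leaf and produces exactly a Step 2 configuration (degree-4 branch vertex at an end of the spine), giving an unconditional contradiction.

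Second, and more fundamentally, your whole plan rests on proving full disjointness $N(u_5)\cap N(t)\cap Q_2=\emptyset$, and you explicitly leave its crux unresolved (``reabsorbing $B_6$ \dots is where the real bookkeeping lies''). The paper never proves disjointness, and this is the idea you are missing: it proves only $|N(u_5)\cap N(t)\cap Q_2|\le 1$, by forcing the chords $tx^{-}$, $ty^{-}$ and then $x^{-}y^{-}$ via claw-freeness at $x$, $y$ and at $t$ (using $v_3$), and rerouting to contradict (C3). Then, in the residual case where one common neighbour $x$ exists, it restores the count $\le |Q_2|$ by exhibiting a vertex of $Q_2$ that no $u_j$ sees: namely $w^{-}$. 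Showing $w^{-}\notin N(I)$ uses Claims~\ref{claim25} and~\ref{claim30}, condition (C3), and, crucially, the common neighbour $x$ itself, through the tree $T'=T+u_5x+w^{-}u_7-ww^{-}-zz^{-}$. So the possible overlap is not eliminated; it is compensated by an uncounted vertex of $Q_2$. Without either a genuine proof of disjointness (which your sketched local reroutes do not supply, and which may well be unattainable) or this compensation mechanism, the bound $\sum_{j=1}^{7}|N_G(u_j)\cap Q_2|\le |Q_2|$ does not follow.
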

\begin{proof}
By the condition (C3) we have $N(u_6)\cap Q_2=\emptyset$. Combining with Claim \ref{claim25} we give the following
\[\sum\limits_{j=1}^7 |N_G(u_j)\cap Q_2|=|N(t)\cap Q_2|+|N(u_5)\cap Q_2|.\]
Firstly, we will prove that $|N(u_5)\cap N(t)\cap Q_2|\leq 1$. Indeed, suppose that there exist $x,y\in N(u_5)\cap N(t)\cap Q_2$, $x\not= y$. Without loss of generality, we may assume $x\in P_T[y,s]$.\\ 
If $u_5x^{-}\in E(G)$ then we come back previous steps with the tree $T'=T+xt+u_5x^{-}-xx^{-}-zz^{-}$ to give a contradiction. Hence $u_5x^{-}\notin E(G)$. Then since $\{xu_5, xt, xx^{-}\}$ is not claw we obtain $tx^{-}\in E(G)$. Similarly, we also get $ty^{-}\in E(G)$.\\
If $v_3x^{-}\in E(G)$ then we use the condition (C3) or the proof of Step 2 to obtain a contradiction with the tree $T'=T+x^{-}v_3-tv_3$. Hence $v_3x^{-}\notin E(G)$. Similarly, we also have $v_3y^{-}\notin E(G)$.\\
Now, since $\{tv_3, tx^{-}, ty^{-}\}$ is not claw we get $x^{-}y^{-}\in E(G)$. Then consider the tree $T'=T+x^{-}y^{-}+u_5y-x^{-}(x^{-})^{-}-yy^{-}$ to imply a contradiction with the condition (C3). So $|N(u_5)\cap N(t)\cap Q_2|\leq 1$.

Now, if $N(u_5)\cap N(t)\cap Q_2=\emptyset$ then (\ref{eq43}) holds.

If $|N(u_5)\cap N(t)\cap Q_2|=1$, set $N(u_5)\cap N(t)\cap Q_2=\{x\}$. To complete (\ref{eq43}) we will prove that $w^{-}\notin N(I)$. Indeed, by Claim \ref{claim25} and Claim \ref{claim30} we get $w^{-}\notin N(u_i)$ for all $i\in \{1;2;3;4;5\}$. Now by the condition (C3) we may obtain $w^{-}\notin N(u_6)$. On the other hand, if $w^{-}u_7\in E(G)$ then the tree $T'=T+u_5x+w^{-}u_7-ww^{-}-zz^{-}$ implies a contradiction by Step 1 or Step 2. Hence if $|N(u_5)\cap N(t)\cap Q_2|=1$ then $w^{-}\notin N(I)$. Therefore, Claim \ref{claim33} is completed.
\end{proof}
\begin{claim}\label{claim34}
When $z\not=w$, if $N(u_5)\cap P_2\not= \emptyset$ then $z^{-}\notin N(I)$.
\end{claim}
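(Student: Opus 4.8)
The plan is to treat the present statement as the Step~3 analogue of Claim~\ref{claim10}: the segment $P_2=V(P_T[t,z]-\{t,z\})$ plays here the role that the interior of $P_T[t,w]$ played in Step~1, with $z$ (the attachment point nearest $t$) in the role of $w$. First I would fix a witness $p\in N(u_5)\cap P_2$, so that $u_5p\in E(G)$ with $p$ strictly between $t$ and $z$ on $P_T[t,s]$. Since $N(u_5)\cap P_2\neq\emptyset$ forces $P_2\neq\emptyset$, the predecessor $z^-$ lies in $P_2$, hence $z^-\neq t$ and $z^-\in P=P_1\cup P_2$. The goal is then to exclude every edge from $z^-$ to $I=\{u_1,\dots,u_6,u_7=t\}$, arguing case by case on the other endpoint.

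Four of the seven cases are immediate: because $z^-\in P$, Claim~\ref{claim25} gives $z^-\notin N(u_i)$ for $1\le i\le 4$. For $z^-u_5$ I would suppose the edge exists and pass to $T'=T+u_5z^- - zz^-$; this lowers $\deg_T z$ to $2$, keeps $s,t,w$ as the only branch vertices, and drops the leaf count to $5$, so $T'$ is a spanning tree with five leaves and three degree-$3$ branch vertices, i.e.\ a Step~1 configuration, a contradiction. In particular the chosen $p$ satisfies $p\neq z^-$. The case $z^-u_6$ is identical in spirit: $T'=T+u_6z^- - zz^-$ again frees $z$ and returns a Step~1 configuration.

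The remaining and most delicate case is $z^-u_7=z^-t$, and this is where the hypothesis $N(u_5)\cap P_2\neq\emptyset$ is really used. The naive swap $T+tz^- - zz^-$ fails, since inserting $tz^-$ creates a cycle lying entirely inside $P_2\cup\{t\}$ which $zz^-$ does not meet; the reroute must pass through $u_5$. Following the template of Claim~\ref{claim10}, I would use the double swap $T'=T+u_5p+tz^- - zz^- - pp^-$: deleting $zz^-$ and inserting $u_5p$ reconnects the two sides of $P_T[t,s]$, after which inserting $tz^-$ and deleting $pp^-$ destroys the residual cycle inside $P_2$. A degree count then leaves $s,w$ of degree $3$ and $t$ of degree $4$, with $p^-$ a new leaf and $u_5$ no longer a leaf, so $T'$ is a six-leaf spanning tree realizing a Step~2 configuration, again a contradiction. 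The only point needing separate care is the boundary case $p^-=t$ (that is, $p$ adjacent to $t$), where deleting $pt$ instead lowers $\deg_T t$ back to $3$ and produces a Step~1 rather than a Step~2 configuration, with the same conclusion. The main obstacle is precisely the bookkeeping in this last case: checking that each deleted edge lies on the cycle created by the corresponding inserted edge, so that $T'$ is genuinely a spanning tree, and that its degree sequence matches exactly the pattern ruled out in Step~1 or Step~2; no case forces a new independent set or a fresh degree-sum estimate.
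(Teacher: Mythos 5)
Your proof is correct and takes essentially the same route as the paper: the cases $u_1,\dots,u_4$ via Claim \ref{claim25}, the single swap $T+u_5z^{-}-zz^{-}$ (and $T+u_6z^{-}-zz^{-}$, which the paper instead quotes from Claim \ref{claim30}) reducing to Step 1, and the same double swap $T'=T+u_5p+tz^{-}-zz^{-}-pp^{-}$ for the case $u_7=t$, landing in Step 2 in general and in Step 1 in the boundary case $p=t^{+}$. The only cosmetic difference is that you re-derive $u_6z^{-}\notin E(G)$ directly rather than citing Claim \ref{claim30}.
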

\begin{proof}
Suppose that there exists $x\in N(u_5)\cap P_2$. By Claim \ref{claim25} and Claim \ref{claim30} we have $z^{-}\notin N(u_i)$ for all $i\in\{1;2;3;4;6\}$.\\
If $u_5z^{-}\in E(G)$ then using the proof of Step 1 to give a contradiction when we consider the tree $T'=T+u_5z^{-}-zz^{-}.$ Hence $z^{-}\notin N(u_5)$.\\
If $u_7z^{-}\in E(G)$ then we consider the tree $T'=T+u_5x+u_7z^{-}-zz^{-}-xx^{-}$ to come back Step 1 when $x=t^{+}$ and Step 2 when $x\not= t^{+}.$ This implies a contradiction. Hence $z^{-}\notin N(u_7)$.\\
Therefore, Claim \ref{claim34} holds.
\end{proof}
\begin{claim}\label{claim35}
When $z\not=w$, we have $N(u_5)\cap N(u_6)\cap P_2=\emptyset$.
\end{claim}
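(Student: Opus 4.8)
The plan is to argue by contradiction: suppose some $x\in N(u_5)\cap N(u_6)\cap P_2$ exists, and produce from $T$ a new spanning tree that falls into the configuration already excluded in Step 1 (five leaves together with three degree-three branch vertices lying on a common path). Since $x\in P_2=V(P_T[t,z]-\{t,z\})$, the predecessor $x^-$ on $P_T[t,s]$ exists (with the possibility $x^-=t$), and by Claim~\ref{claim27} the set $I$ is independent, so $u_5u_6\notin E(G)$. Applying the claw-free hypothesis to the three edges $xx^-,\,xu_5,\,xu_6$ then forces $x^-u_5\in E(G)$ or $x^-u_6\in E(G)$, and I would split the argument on these two (possibly overlapping) cases.

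In the case $x^-u_6\in E(G)$ I would set
$$T'=T+u_5x+u_6x^- - xx^- - wv_5 .$$
The key observation is that deleting $xx^-$ and $wv_5$ cuts $T$ into exactly three pieces — the part of $P_T[t,s]$ on the $t$-side of $x^-$, the detached branch $B_5$, and the remaining main component containing $s,z,w$ — while the two new edges $u_6x^-$ and $u_5x$ splice these three pieces back together acyclically, so $T'$ is again a spanning tree. A short degree count shows that $w$ drops to degree two (hence is no longer a branch vertex), that $x$ and $x^-$ stay at degree two, and that $s,t,z$ remain of degree three; at the same time $u_5,u_6$ become internal and only $v_5$ appears as a new leaf, so $|L(T')|=5$. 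Tracing the resulting path, $z$ lies on $P_{T'}[t,s]$ and carries its third arm through $x$ down to $v_5$, so $T'$ is exactly a tree of the kind treated in Step~1, which was shown to be impossible. In the symmetric case $x^-u_5\in E(G)$ I would instead take $T'=T+u_6x+u_5x^- - xx^- - zv_6$; the identical bookkeeping now eliminates $z$, leaves $s,w,t$ as the three degree-three branch vertices with $w$ in the middle, and again produces a five-leaf Step~1 tree, a contradiction. Thus $N(u_5)\cap N(u_6)\cap P_2=\emptyset$.

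The part I expect to require the most care is the connectivity-and-degree bookkeeping for $T'$: one must confirm that the two deletions really split $T$ into the three claimed components, that the two additions reconnect them into a single acyclic graph, and then check precisely that the number of leaves falls from six to five and that no fourth branch vertex is inadvertently created. The only genuinely delicate points are the degenerate positions $x^-=t$ (so the spliced edge is incident with $t$ itself) and the trivial branches $v_5=u_5$ or $v_6=u_6$; in each of these the same computation goes through with the leaf merely relocated rather than created, so the conclusion still lands in the Step~1 configuration and the cases cause no essential difficulty.
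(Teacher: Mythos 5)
Your proof is correct and takes essentially the same route as the paper: both arguments combine a claw centered at the common neighbour in $P_2$ (with legs $u_5$, $u_6$ and a path-neighbour) with a two-edge exchange that turns $T$ into a five-leaf tree having three degree-three branch vertices on a common path, i.e.\ exactly the configuration already excluded in Step 1. The only difference is the order of the two ingredients --- the paper first uses the exchange $T+u_jy+u_ky^{+}-yy^{+}-zz^{-}$ to prove $y^{+}\notin N(u_5)\cup N(u_6)$ and then exhibits the claw $\{yu_5,yu_6,yy^{+}\}$, whereas you invoke claw-freeness first to force $x^{-}u_5\in E(G)$ or $x^{-}u_6\in E(G)$ and then perform the corresponding exchange --- which is an immaterial variation.
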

\begin{proof}
First, we will show that $y\in N(u_j)\cap P_2$ then $y^{+}\notin N(u_k), k \not=i,$ where $\{j;k\}=\{5;6\}$. Indeed, if $y^{+}u_k\in E(G)$ then we give a contradiction as in Step 1 by considering the tree $T'=T+u_jy+u_ky^{+}-yy^{+}-zz^{-}.$ Now, suppose that there exists $y\in N(u_5)\cap N(u_6)\cap P_2$. We get $y^{+}\notin N(u_5)\cup N(u_6)$. Then $G$ contains a claw subgraph $\{yu_5, yu_6, yy^{+}\}$, a contradiction. Therefore $N(u_5)\cap N(u_6)\cap P_2=\emptyset$.
\end{proof}
\begin{claim}\label{claim36}
When $z\not=w,$ we have $|N(u_i)\cap N(t)\cap P_2|\leq 1$ for all $5\leq i \leq 6$.
\end{claim}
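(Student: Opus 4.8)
I argue by contradiction in both cases. Suppose $x,y\in N(u_i)\cap N(t)\cap P_2$ with $x\neq y$; orienting $P_2$ from $t$ towards $z$, I may assume $y\in P_T[t,x]$, so $x^{-}$ lies strictly between $y$ and $x$. The two cases $i=5,6$ run on one template, patterned on Claims \ref{claim24} and \ref{claim33}, and differ only in where the branch $B_i$ meets the spine (at $w$ when $i=5$, at $z$ when $i=6$) and hence in which pendant edge $e_i$ is cut on rerouting: $e_5=wv_5$ and $e_6=zv_6$. For $i=5$ I first invoke Claim \ref{claim34}: since $N(u_5)\cap P_2\neq\emptyset$ we get $z^{-}\notin N(I)$, whence $x,y\neq z^{-}$, removing the only boundary vertex of $P_2$ that could spoil the predecessor bookkeeping.

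The first half of the template eliminates $u_ix^{-}\in E(G)$. The exchange $T'=T+xt+u_ix^{-}-xx^{-}-e_i$ detaches $B_i$ and reglues it through $u_ix^{-}$; here $t$ acquires degree $4$ while the attachment vertex drops to degree $2$, so $T'$ is a spanning tree with three branch vertices and six leaves, a tree of the kind already excluded in Step 2. The same exchange with $y$ gives $u_iy^{-}\notin E(G)$. As $I$ is independent (Claim \ref{claim27}), neither $\{xu_i,xt,xx^{-}\}$ nor $\{yu_i,yt,yy^{-}\}$ is a claw, so $tx^{-},ty^{-}\in E(G)$.

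The second half produces the edge needed to shorten the spine. One checks $v_3x^{-},v_3y^{-}\notin E(G)$ (otherwise re-attaching $B_3$ via $T'=T+v_3x^{-}-tv_3$ violates (C3), exactly as in Claim \ref{claim33}), so $\{tv_3,tx^{-},ty^{-}\}$ is not a claw and $x^{-}y^{-}\in E(G)$. The closing exchange $T'=T+x^{-}y^{-}+u_iy-yy^{-}-e_i$ again detaches $B_i$ (now hung through $u_iy$) and promotes $x^{-}$ to a degree-$3$ branch vertex, leaving $s,t,x^{-}$ and the other interior branch vertex (namely $z$ for $i=5$, $w$ for $i=6$) as four degree-$3$ branch vertices of a Step 3 tree. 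The decisive feature is that the internal segment $P_T[y,x]-\{y,x\}$ is diverted onto the new pendant branch, so the $t$--$s$ spine is shortened and $r_1=d_{T'}(s,t)$ strictly decreases; this contradicts (C3), giving $|N(u_i)\cap N(t)\cap P_2|\le 1$.

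The connectivity checks (each $T'$ is a three-part split reglued by the inserted edges) are routine. The real obstacle is the branch-vertex bookkeeping forced by the extra interior branch vertex that a Step 3 tree carries: one must confirm that cutting $e_i$ truly lowers the attachment vertex to degree $2$, so that precisely three branch vertices persist in the first exchange (landing in Step 2) and precisely four in the last (landing in Step 3 with smaller $r_1$), with no stray branch vertex arising inside $B_i$. Claim \ref{claim34} is what keeps $z^{-}$ out of the $i=5$ analysis; the boundary subcase $y=t^{+}$ is immediate, since then $x^{-}y^{-}\in E(G)$ reads as the already established $tx^{-}\in E(G)$.
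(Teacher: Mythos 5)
Your proof is correct and follows essentially the same route as the paper's: first rule out $u_ix^{-}\in E(G)$ (and $u_iy^{-}\in E(G)$) by an exchange that lands in the already-excluded Step 2 configuration, deduce $tx^{-},ty^{-}\in E(G)$ by claw-freeness, rule out $v_3x^{-},v_3y^{-}$ via the (C3)-violating reattachment $T+x^{-}v_3-tv_3$, obtain $x^{-}y^{-}\in E(G)$ from the non-claw $\{tv_3,tx^{-},ty^{-}\}$, and finish with the spine-shortening exchange (your $e_i$ is exactly the paper's $v_iv_i^{-}$, since $v_5^{-}=w$ and $v_6^{-}=z$). The only deviations are cosmetic: in the first exchange the paper cuts $zz^{-}$ where you cut $e_i$ (both yield Step 2 trees), and you make explicit the boundary case $y=t^{+}$ and the fact $x^{-}\neq y$ (which, strictly, follows only after $u_ix^{-}\notin E(G)$ is established, not from the orientation alone as your opening sentence suggests).
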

\begin{proof}
For $5\leq i \leq 6,$ suppose that there exist $x,y\in N(u_i)\cap N(t)\cap P_2$, $x\not= y$. Without loss of generality, we may assume that $y\in P_T[t,x]$. \\
If $u_ix^{-}\in E(G)$ then we come back Step 2 with the tree $T'=T+xt+u_ix^{-}-xx^{-}-zz^{-}$, this gives a contradiction. Hence $u_ix^{-}\notin E(G)$. Then since $\{xu_i, xt, xx^{-}\}$ is not claw we obtain $tx^{-}\in E(G)$. Similarly, we also get $ty^{-}\notin E(G)$.\\
If $v_3x^{-}\in E(G)$ then we get a contradiction with the condition (C3) with the tree $T'=T+x^{-}v_3-tv_3$. Hence $v_3x^{-}\notin E(G)$. Similarly, we also have $v_3y^{-}\notin E(G)$.\\
Now, since $\{tv_3, tx^{-}, ty^{-}\}$ is not claw we get $x^{-}y^{-}\in E(G)$. Then consider the tree 
$$	T' = 	T+x^{-}y^{-}+u_iy-v_iv_i^{-}-yy^{-},$$
which contradicts with the condition (C3). Claim \ref{claim36} is proved.
\end{proof}
By Claim \ref{claim25} and Claims \ref{claim34}-\ref{claim36} we have
\begin{equation}\label{eq44}
\sum\limits_{j=1}^7 |N_G(u_j)\cap P_2|= |N(u_5)\cap P_2|+|N(u_6)\cap P_2|+|N(t)\cap P_2|\leq |P_2|+1.
\end{equation}
Summing the inequalities (\ref{eq31}), (\ref{eq32}) and (\ref{eq41})-(\ref{eq44}), it yields
\begin{align*}
|G|&=\sum\limits_{i=1}^6|B_i|+|P_T[s,t]|\\
&\geq (4+\sum\limits_{i=1}^6\sum\limits_{j=1}^7 |N_G(u_j)\cap B_i|)+4+|P_1|+|P_2|\\
&\geq 3+\sum\limits_{i=1}^6\sum\limits_{j=1}^7 |N_G(u_j)\cap B_i|+\sum\limits_{j=1}^7|N(u_j)\cap \{s,w,z\}|+ \sum\limits_{j=1}^7 |N_G(u_j)\cap (P_1\cup P_2)|\\
& \geq 3+\deg (I)\geq 3+\sigma_7(G)\geq 3+(|G|-2)=1+|G|.
\end{align*}
This gives a contradiction with the assumptions. Step 3 is proved.

\vspace{0.5cm}
{\bf Step 4}. $|L(T)|=6$ and the tree $T$ has exactly four branch vertices of degree 3 which called $z,s,t,w$ such that $\{z\}= P_T{[t,s]} \cap P_T{[t,w]}$ (see figure \ref{Pic4}).

\begin{figure}[h]
	\centering
	\includegraphics[width=0.4\linewidth]{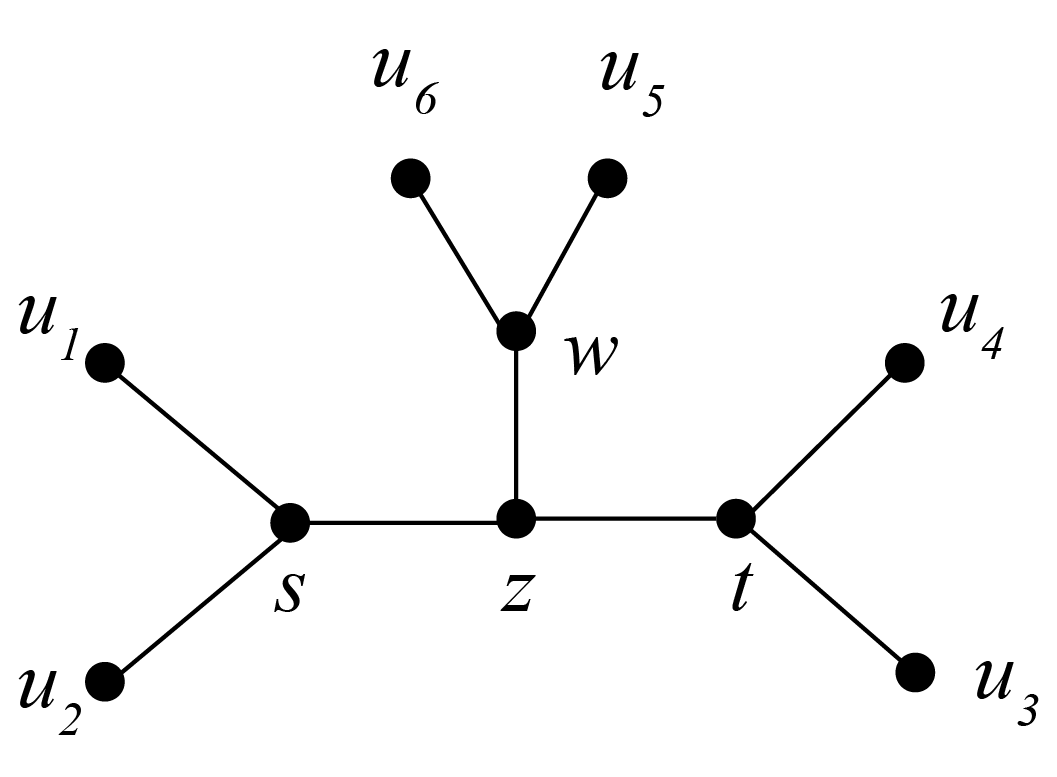}
	\caption[Tree T]{The tree $T$ is in Step 4}
	\label{Pic4}
\end{figure}

Let $L(T)=\{u_1, u_2, u_3,u_4,u_5,u_6\}$ be the set of leaves of $T.$ \\
Let $B_i$ be a vertex set of components of $T-\{s,w,z,t\}$ such that $L(T)\cap B_i=\{u_i\}$ for $1\leq i\leq 6$ and the only vertex of $N_T\{s,w,z,t\}\cap B_i$ is denoted by $v_i$. In this step, we may assume $B_i\cap N_T(s) \not= \emptyset\ (1\leq i \leq 2), B_j\cap N_T(t)\not= \emptyset\ (3\leq j \leq 4)$ and $B_k\cap N_T(w)\not= \emptyset\ (5\leq k \leq 6).$ Set $P_1=V(P_T[z,s]-\{z,s\}), P_2=V(P_T[z,t]-\{z,t\}), P_3=V(P_T[z,w]-\{z,w\})$.

We choose the tree $T$ such that:\\
(C4)\ $|P_1|+|P_2|+|P_3|$ is as small as possible.

By the condition (C4) or coming back Step 1, Step 2, Step 3 if necessary we have the following.
\begin{claim}\label{claim41}
	$v_1v_2;v_3v_4;v_5 v_6\in E(G)$ and $N(u_i)\cap (P_1\cup P_2\cup P_3\cup \{s,t,w\})=\emptyset.$
\end{claim}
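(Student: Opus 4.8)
The plan is to prove both assertions by the tree--exchange technique already used in Claims \ref{claim1}, \ref{claim8} and \ref{claim25a}: I assume a forbidden edge is present, build a new spanning tree $T'$ by adding that edge and deleting a carefully chosen tree edge, and then show that $T'$ has at most two branch vertices (impossible by hypothesis), or contradicts the minimality (C4), or reproduces one of the configurations already excluded in Steps 1--3. Throughout I use that $T$ is a \emph{tripod}: the centre $z$ has degree three and its three legs $P_1,P_2,P_3$ end in the degree--three branch vertices $s,t,w$, each of which carries two leaves.

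First I would establish $v_1v_2,v_3v_4,v_5v_6\in E(G)$. By the symmetry of the three legs it suffices to argue at $s$, whose tree--neighbours are $v_1,v_2$ and $s^{-}$ (the vertex of $P_1$ adjacent to $s$, or $z$ when $P_1=\emptyset$). Exactly as in Claim \ref{claim1}, if $v_1s^{-}\in E(G)$ then $T'=T+v_1s^{-}-sv_1$ leaves the six leaves untouched but demotes $s$ to degree two while promoting $s^{-}$, strictly shortening $P_1$ and contradicting (C4). The same excludes $v_2s^{-}\in E(G)$, and since $\{sv_1,sv_2,ss^{-}\}$ cannot be a claw we obtain $v_1v_2\in E(G)$; the identical argument at $t$ and $w$ gives the remaining two chords.

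Second, for the non--adjacency statement I would fix $u_i$ together with a hypothetical neighbour $y\in P_1\cup P_2\cup P_3\cup\{s,t,w\}$ and reroute $B_i$. If $y$ lies on the leg carrying $u_i$ itself, the swap $T'=T+yu_i-v_iv_i^{-}$ slides the branch vertex of that leg strictly towards $z$, again violating (C4). If $y$ lies on a \emph{different} leg, the same swap keeps $z$ of degree three but turns the core into a path on four branch vertices, i.e.\ a Step~3 tree, which is excluded. If $y\in\{t,w\}$ (a branch vertex other than the one carrying $u_i$), the swap raises that vertex to degree four and deletes one branch vertex, producing a Step~2 tree. Finally, if $y=s$ is the branch vertex carrying $u_i$, I would invoke the chord $v_1v_2$ just found: the swap $T'=T+su_i+v_1v_2-sv_1-sv_2$ demotes $s$ to degree two and merges $B_1,B_2$ into a single leg, leaving a tree with five leaves and three degree--three branch vertices, i.e.\ a Step~1 tree. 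Each case contradicts a previous step or the minimality, so no such $y$ exists.

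The main obstacle will be the degenerate boundary situations in which one of the legs $P_1,P_2,P_3$ is empty, so that an exchange acts directly on the centre $z$ instead of merely sliding a branch vertex along a leg. In that event the swap raises the degree of $z$ or lowers the number of branch vertices, and the resulting tree need not be literally one of the configurations of Steps~1--3; it must be re--examined, possibly after a further exchange, until it matches the correct earlier case. The deliberate exclusion of $z$ from the forbidden set $P_1\cup P_2\cup P_3\cup\{s,t,w\}$ is exactly what makes these degenerate exchanges consistent, and disposing of them carefully is where the real work lies. Once this is done, the edges $v_1v_2,v_3v_4,v_5v_6$ and the non--adjacency of the leaves confine $N(u_i)$ to $B_i\cup\{z\}$, so the closing degree--sum inequality can be assembled exactly as in Steps~1--3.
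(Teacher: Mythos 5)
Your proposal follows the paper's own route exactly: the paper justifies this claim with the single remark that it holds ``by the condition (C4) or coming back Step 1, Step 2, Step 3 if necessary,'' and your exchanges are precisely the ones that remark points to. All your generic cases are correct: for $y$ on the leg carrying $u_i$, the swap $T+yu_i-v_iv_i^{-}$ yields another Step-4 (spider) tree with $|P_1|+|P_2|+|P_3|$ strictly smaller, contradicting (C4); for $y$ on a different leg it yields the Step 3 configuration with two distinct interior touch points; for $y\in\{t,w\}$ it yields the Step 2 configuration (degree-$4$ branch vertex at an end of the branch-vertex path, degree-$3$ vertex in the middle); and for $y=s$ the swap $T+su_i+v_1v_2-sv_1-sv_2$, using the chord established first, yields the Step 1 configuration.

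The only blemish is your closing paragraph. The degenerate situation you worry about ($P_1=\emptyset$, so $s^{-}=z$) resolves instantly, and your assertion that the resulting tree ``need not be literally one of the configurations of Steps 1--3'' is in fact false: if $v_1z\in E(G)$, then $T'=T+v_1z-sv_1$ has exactly two branch vertices $t,w$ of degree $3$ at the ends, plus the degree-$4$ vertex $z$ in the interior of $P_{T'}[t,w]$ carrying two pendant paths ($B_1$ and $\{s\}\cup B_2$) --- this is precisely Step 3 in its allowed case $w=z$ (``here $z$ can be $w$''), so Step 3 applies verbatim and no further exchange is needed. Likewise none of your other swaps is sensitive to empty legs or to the extreme case $B_i=\{u_i\}$, since Steps 1--3 impose no lower bound on the lengths of the paths between branch vertices; so the ``real work'' you defer amounts to one sentence. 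One further caution on your last sentence: the claim confines $N(u_i)$ to $\bigl(\bigcup_{j} B_j\bigr)\cup\{z\}$, not to $B_i\cup\{z\}$; cross-adjacencies between distinct branches are still possible, and they are exactly what inequality (\ref{eq143}) must absorb in the final count.
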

Set $u_7=z,\ I=\{u_1;u_2;u_3;u_4;u_5;u_6;u_7\}$.  \\
Repeating the similar arguments as in previous steps and combining Claim \ref{claim41} we may obtain $I$ is an independent set and 
\begin{equation}\label{eq141}
\sum\limits_{j=1}^7 |N_G(u_j)\cap (P_1\cup P_2\cup P_3)|= |N(z)\cap (P_1\cup P_2\cup P_3)|\leq |P_1\cup P_2\cup P_3|=|P_1|+|P_2|+|P_3|.
\end{equation}
Moreover, using the same arguments as in previous steps and combining with Claim \ref{claim41} we also have the followings.
\begin{equation}\label{eq142}
\sum\limits_{j=1}^7 |N_G(u_j)\cap \{s,t,w\}|= |N(z)\cap \{s,t,w\}|\leq 3.
\end{equation}
\begin{equation}\label{eq143}
\sum\limits_{j=1}^7 |N_G(u_j)\cap B_i|\leq |B_i|-1 \text{ for all $i\in \{1;2; 3; 4; 5; 6\}$}.
\end{equation}
By (\ref{eq141}), (\ref{eq142}) and (\ref{eq143}) we obtain
\begin{align*}
|G|&=\sum\limits_{i=1}^6|B_i|+4+|P_1|+|P_2|+|P_3|\\
&\geq (6+\sum\limits_{i=1}^6\sum\limits_{j=1}^7 |N_G(u_j)\cap B_i|)+1+\sum\limits_{j=1}^7 |N_G(u_j)\cap \{s,t,w\}|+\sum\limits_{j=1}^7 |N_G(u_j)\cap (P_1\cup P_2\cup P_3)|\\
&\geq 7+\deg(I)\geq 7+\sigma_7(G)\geq 7+(|G|-2)=5+|G|.
\end{align*}
This is impossible. Step 4 is completed.

Finally, we complete the proof of Theorem \ref{thm-mainB}.

\vspace{0.5cm}
{\bf Proof of Theorem \ref{thm-mainA}}. 

 Suppose, to the contrary, that $G$ has no spanning tree with at most 2 branch vertices. Let $T$ be a spanning tree of $G.$ Then $|B(T)| \geq 3.$ So we have the following
\begin{equation*}
	|L(T)|=2+\sum\limits_{v\in B(T)}(\deg_T(v)-2)\geq 2+3.(3-2)=5.
\end{equation*}
On the other hand, since the assumptions of Theorem \ref{thm-mainA} we use Theorem \ref{theorem KKMO} for $k=3$ to show that $G$ has a spanning tree $T$ with at most $5$ leaves. Then $|L(T)|= 5$ and $T$ has exactly three branch vertices $s, w, t$ of degree 3. Here, we may assume that $w\in P_T[t,s]$. 

Now we use the same notations as in the proof of Step 1 of Theorem \ref{thm-mainB}. Set $X=\{u_1, u_2, u_3, u_4, u_5, u_6=t\}.$ Then $X$ is an independent set.

Repeating same arguments as in the proof of Step 1 of Theorem \ref{thm-mainB} we will obtain the followings.
\begin{equation}\label{eq51}
\sum\limits_{j=1}^6 |N_G(u_j)\cap B_i|\leq |B_i|, \text{ for all $i\in \{3; 4\}$}.
\end{equation}
\begin{equation}\label{eq52}
	\sum\limits_{j=1}^6|N_G(u_j)\cap B_i|\leq 
	|B_i|-1, \text{ for all $i\in \{1; 2;5\}$}.
\end{equation}
\begin{equation}\label{eq53}
\sum\limits_{j=1}^6 |N_G(u_j)\cap \{s,w\}|=0.
\end{equation}
\begin{equation}\label{eq54}
\sum\limits_{j=1}^6 |N_G(u_j)\cap P_1|\leq \sum\limits_{j=1}^7|N_G(u_j)\cap P_1|\leq |P_1|.
\end{equation}
\begin{equation}\label{eq55}
\sum\limits_{j=1}^6 |N_G(u_j)\cap P_2|\leq \sum\limits_{j=1}^7|N_G(u_j)\cap P_2|\leq |P_2|.
\end{equation}
By (\ref{eq51})-(\ref{eq55}) we conclude that
\begin{align*}
	|G|&=\sum\limits_{i=1}^5|B_i|+|P_T[t,s]|\\
	&\geq (3+\sum\limits_{i=1}^5\sum\limits_{j=1}^6 |N_G(u_j)\cap B_i|)+3+|P_1|+|P_2|\\
	&\geq 6+\sum\limits_{i=1}^5\sum\limits_{j=1}^6 |N_G(u_j)\cap B_i|+\sum\limits_{i=1}^6 |N_G(u_j)\cap P_T[t,s]|\\
	& \geq 6+\deg (X)\geq 6+\sigma_6(G)\geq 6+(|G|-5)=1+|G|.
\end{align*}
This gives a contradiction.\\
Theorem \ref{thm-mainA} is proved.

{\bf Acknowledgements.} The research is supported by the NAFOSTED Grant of Vietnam (No.101.04-2018.03).

\end{document}